\numberwithin{equation}{section} 
\newtheorem{theo}{Theorem}[section]
\newtheorem{coro}{Corollary}[section]
\newtheorem{lemm}{Lemma}[section]
\newtheorem{defi}{Definition}[section]
\newtheorem{rema}{Remark}[section]
\theoremstyle{definition}
\newtheorem{ex}{Example}[section]
\journal{*}
\begin{document}
\selectlanguage{english}
\begin{frontmatter}
\title{Advanced and comprehensive research on the dynamics of COVID-19 under mass communication outlets intervention and quarantine strategy: a deterministic and probabilistic approach}
\author{Driss Kiouach\footnote{Corresponding author.\\
\hspace*{0.4cm}E-mail addresses: \href{d.kiouach@uiz.ac.ma}{d.kiouach@uiz.ac.ma} (D. Kiouach), \href{salim.elazamielidrissi@usmba.ac.ma}{salim.elazamielidrissi@usmba.ac.ma} (S. El Azami El-idrissi),\\ \hspace*{2.8cm}\href{yassine.sabbar@usmba.ac.ma}{yassine.sabbar@usmba.ac.ma} (Y. Sabbar).},  Salim El Azami El-idrissi and Yassine Sabbar}
\address{LPAIS Laboratory, Faculty of Sciences Dhar El Mahraz, Sidi Mohamed Ben Abdellah University, Fez, Morocco.}   
\vspace*{1cm}
\begin{abstract}
The ongoing Coronavirus disease 2019 (COVID-19) is a major crisis that has significantly affected the healthcare sector and global economies, which made it the main subject of various fields in scientific and technical research. To properly understand and control this new epidemic, mathematical modelling is presented as a very effective tool that can illustrate the mechanisms of its propagation. In this regard, the use of compartmental models is the most prominent approach adopted in the literature to describe the dynamics of COVID-19. Along the same line, we aim during this study to generalize and ameliorate many existing works that consecrated to analyse the behaviour of this epidemic. Precisely, we propose an SQEAIHR (Susceptible-Quarantined-Exposed-Asymptomatically infective- Infected-Hospitalized-Recovered) epidemic system for Coronavirus. Our constructed model is enriched by taking into account the media intervention and vital dynamics. By the use of the next-generation matrix method, the theoretical basic reproductive number $\mathcal{R}_0$ is obtained for  COVID-19.  Based on some nonstandard and generalized analytical techniques, the local and global stability of the disease-free equilibrium are proven when $\mathcal{R}_0<1$. Moreover,  in the case of $\mathcal{R}_0>1$, the uniform persistence of  COVID-19 model is also shown. In order to better adapt our epidemic model to reality,  the randomness factor is taken into account by considering a proportional white noises, which leads to a well-posed stochastic model.  Under appropriate conditions, interesting asymptotic properties are proved, namely: extinction and persistence in the mean. The theoretical results show that the dynamics of the perturbed COVID-19 model are determined by parameters that are closely
related to the magnitude of the stochastic noise. Finally, we present some numerical illustrations to confirm our theoretical results and to show the impact of media intervention and quarantine strategies.\\[2mm]  
\textbf{ Keywords:} COVID-19; Epidemic model; Quarantine; Coverage media; Basic reproduction number; 
Stability;\\
\hspace*{2.1cm} Itô's formula; Extinction; Persistence in the mean.\\[3pt]
\textbf{Mathematics Subject Classification 2020}: 34A12; 34A26; 60H30; 60H10; 37C10; 92D30.
\end{abstract}
\end{frontmatter}

\section{Introduction and model formulation}
\par The control of human and animal epidemics is principally based on modeling and simulation as the main decision-making tools \cite{brauer2013mathematical,capasso2008mathematical}. Anyhow, each epidemic is distinguished by its own biological characteristics, which impose the adaptation of the dynamical models describing their propagation mechanisms to any specific case, and this in order to deal with real situations \cite{safarishahrbijari2015particle,el2018mathematical,kiouach2020ergodic}. Presently, the whole world is under a tremendous threat due to the Coronavirus disease, which is a highly contagious virus that first appeared in China at the end of $2019$, and spread rapidly to cover almost the entire globe \cite{wang2020novel}. Several researchers have discovered that this infectious disease, commonly called COVID-19 or $2019$-nCoV, is caused by a new generation of beta-coronavirus \cite{wu2020outbreak}, which affects the lungs and leads to the severe acute respiratory syndrome, the reason why the World Health Organization (WHO) renamed it SARS-CoV-2 \cite{who}. Because of the exponential increase in cases and victims that it caused, the Coronavirus was declared to be an international public health emergency \cite{whoo}. However, the danger did not stop at all, and the disease area carried on expanding to cover more than a hundred countries in March $2020$, until the WHO had considered it as a pandemic in April of the same year when the statistics revealed that the number of infected populations and deaths surpassed respectively $800,000$ and $40,000$ \cite{ivorra2020mathematical}. Despite the existence of many suggested COVID-19 vaccines  for which some national  authorities have already permited the emergency use \cite{whoooo}, none is proven to be completly safe yet. According to the WHO, these vaccines are not rigoursly tested and still in the phase of large clinical trial (see \cite{cdc,whooo}). The absence of an WHO-officially authorized or recommended treatment \cite{whooo} remains a genuine challenge for all the governments, especially with the significant and noticeable repercussions that this epidemic presents in the economic and health realms. In the case of this new virus, the majority of transmissions is occurred by respiratory droplets that may be inhaled from close contact with an infected person when he exhales, sneezes or coughs \cite{zeb2020mathematical}. Moreover, these droplets fall quickly on the floors and surfaces which makes them also a possible source of infection. Currently, the adoption of suitable strategies to tackle Coronavirus transportation presents big defiance for all the decision-makers around the world, and to overcome it, a good understanding of the pandemic evolution dynamics is really required.\par Developing an appropriate mathematical model is a prominent method to purvey essential instructions and guidelines measures for disease mitigation. In this context, compartmental systems, like the simple SIR or the more advanced ones such as SIRS, SEIRS, SEIRQ and others, (see \cite{brauer2008compartmental,capasso2008mathematical}) can be an inspiring choice to deal with the critical situation that we are going through now. Since its discovery, a lot of models have been suggested for the study of COVID-19 dynamics \cite{kucharski2020early,roosa2020real,fanelli2020analysis}. 
 Some works like \cite{zhong2020early}, suggested a classical SIR system for predicting and analysing the novel Coronavirus, other ones such as \cite{ming2020breaking,nesteruk2020statistics} used a modified version of this system for the purpose of being more adapted to the dynamics transmission of SARS-CoV-2. With regards to this new virus characteristics, the last mentioned model was expanded to an SEIR one by taking into consideration a latency period during which the infected individuals are not infectious yet. Yang and Wang \cite{yang2020mathematical} used exactly this extended model to describe COVID-19 dissemination in Wuhan, China. They considered various transmission ways and incorporated the importance of the environmental reservoir. In the same context, Kucharski et al. \cite{kucharski2020early} adopted an SEIR model to study the transmission variation of COVID-19 at the beginning of 2020. For a more rigorous vision of deaths number evolution, the authors in \cite{rajagopal2020fractional} developed a new form of the SEIR model by adding a deceased individuals class denoted (D). They utilized a fractional-order formulation SEIRD and arrived to the fact that this latter is more suitable than the classical one introduced in \cite{unknown}, and has less root mean square deviation. Inspired by the studies presented in \cite{ivorra2015codis,ferrandez2019application} and \cite{ferrandez2020multi}, Ivorra et al. \cite{ivorra2020mathematical} treated a fractional SEIHRD model to give successful forecasts about future variation in cases and deaths statistics, bearing in mind the existence of undetected infectious individuals. They proposed a new strategy that considers a proportion of the detected cases over total infected ones and exhibited the impact of this percentage, usually denoted $\theta$, on the spread level of COVID-19. We mention that there is many interesting works related to the estimations of $\theta$ for the Coronavirus (see, e.g., \cite{li10substantial,russell2020using}). By considering the isolation strategy which played a significant role in controlling many diseases, Pal et al \cite{pal2020mathematical} proposed an SEQIR system to describe the outbreak of the Coronavirus. They determined the basic reproduction number $\mathcal{R}_0$ in terms of system parameters, and on the basis of its expression, they analysed the stability dynamics of their model. In the same regard, Hu et al. \cite{hu2020evaluation} showed the effect of the strategy stated above on the COVID-19 evolution by using an SEIRQ compartmental system that takes into account the input population. They discussed different scenarios of the epidemic spread on Guangdong province by using the officially published data. In order to be well adapted to the prevalence mechanisms of the current epidemic, Jia et al \cite{jia2020modeling} suggested a new mathematical model that counts on the isolation and treatment besides home quarantine as prominent strategies to reduce the contagion intensity, without neglecting the existence of an asymptomatic transmission. In the case of COVID-19, it is important to point out that there seems to be a myriad of asymptomatic infected individuals \cite{rothe2020transmission} with a significant case fatality ratio, but it stills noticeably lower than MERS-CoV or SARS-CoV \cite{guan2020clinical}. They also effectuated the parameters estimation building on officially published data and the well known Least-Squares approach before passing to calculate the control reproduction number $\mathcal{R}_c$ for most Chinese provinces. It should be noted that the aforementioned study supposed the existence of meteorological factors effects on the virus activity, justifying this by the high level of genetic similarity between the current SARS-COV-2 and SARS-COV, and the role of temperature elevation in the disappear of this latter in 2003 \cite{darnell2004inactivation}. In spite of its formulation complexity, the previously mentioned model has a realistic hypothetical framework, which made it a perfect basis for many interesting contributions \cite{wu2020quantifying,mohsen2020global}.
 \par Along the same line, in this work, we will propose and analyze a generalized form of the model presented in \cite{jia2020modeling}. Our new version will comprise two main extra hypotheses: the demographic variations and media intervention. The first addition's objective is to enhance this model and make it able to describe the behaviour of the current pandemic over a long period of time  by including natural birth and general mortality rates. For the second one, we consider the role of the various awareness campaigns and daily reports announced by different mass communication outlets like radio, television, newspaper, internet etc. In order to formulate the model mathematically under the previous assumptions, we consider a host population denoted at time $t$ by $N(t)$ which is partitioned into seven classes of susceptible, quarantined, exposed, infectious with symptoms, asymptomatically infected, hospitalized and recovered individuals, with densities respectively denoted by $S(t), Q(t), E(t), A(t), I(t), D(t)$ and $R(t)$. The overall interactions between these classes are described through the system of deterministic ordinary differential equations below:
\begin{equation}\label{detr}
\left\lbrace
 \begin{aligned}
	\dfrac{\mathrm{d}S}{\text{d}t}	&=\Lambda-\left(\beta_1-\beta_2\dfrac{I}{b+I}\right) S\left(I+\theta A\right)+\lambda Q -(\mu+q)S,\\
		\dfrac{\mathrm{d}Q}{\text{d}t}	&=qS-\left(\mu+\lambda\right)Q,\\
		\dfrac{\mathrm{d}E}{\text{d}t} &=\left(\beta_1-\beta_2\dfrac{I}{b+I}\right)S\left(I+\theta A\right)-(\mu+\sigma)E,\\
		\dfrac{\mathrm{d}A}{\text{d}t}	&=\left(1-p\right)\sigma E-\left(\mu+\varepsilon_A+\gamma_A+d_A\right)A,\\
\dfrac{\mathrm{d}I}{\text{d}t}&=\sigma p E-\left(\mu+\varepsilon_I+\gamma_I+d_I\right)I,\\	
\dfrac{\mathrm{d}H}{\text{d}t}& =\varepsilon_I I+\varepsilon_A A-\left(\mu+d_H+\gamma_H \right)H,\\
\dfrac{\mathrm{d}R}{\text{d}t} &=\gamma_H H+\gamma_I I+\gamma_A A-\mu R.
\end{aligned}
\right.
\end{equation}
In this system, it was assumed that exposed individuals $E(t)$ are not infectious, and being in reality low-level virus carriers \cite{ivorra2020mathematical} explains the adoption of this assumption. Due to the exerted efforts by local authorities, like the closure of educational institutions, traffic control, travel restrictions, extension of vacations and postponing the return to work, we accept in \eqref{detr} that there is no contact chance between home quarantined individuals $Q(t)$ and infectious population. For the hospitalized people $H(t)$, we suppose that they are isolated and in treatment process. Based on the studies presented in \cite{karimi2020vertical,lu2020asymptomatic,schwartz2020infections}, it is admitted in \eqref{detr} that there is no vertical transmission from mother to foetus, also, we presume that the total lockdown policy is applied. So, the recruitment rate $\Lambda$ of uninfected population corresponds only to natural births. In addition, and similarly to \cite{mohsen2020global,kiouach2019modeling}, a media-induced incidence function is used to depict the disease infection mechanism in our system. We take first $\beta_1$ as the usual contact rate before media intervention and we suppose that it undergoes a reduction expressed by $\dfrac{\beta_2I}{b+I}$, when infective people are reported in the media. Note that $\beta_2$ is none other than the maximum reduced contact rate due to the presence of infective individuals $I(t)$ $\left(\text{i.e.}~ \max\limits_{I\geqslant 0} \dfrac{\beta_2I}{b+I}\right)$ and $m$ is the half-saturation constant that presents the effect of media alert on the transmission rate (see \cite{liu2008impact} for more details). It should be clarified that media coverage is able to reduce the intensity of the disease propagation, but cannot completely prevent it, therefore, we assume that $\beta_1\geqslant \beta_2>0$. Logically speaking, symptomatic infected individuals should have a greater contagion rate than asymptomatic ones, and this is the clue why we have denoted the ration between these two rates as $\theta\in(0,1)$. Regarding the home confined population, we have used parameters $p$ and $\lambda$ in reference respectively to the quarantined and release rates. The constant $\sigma$ in \eqref{detr} is actually the passage rate from exposed individuals class to infected compartments $A$ and $I$, but with a probability $p\in(0,1)$ of becoming symptomatic and $1-p$ for being asymptomatic. The parameters $\varepsilon_A$ and $\varepsilon_I$ stand respectively for the hospitalization rates of asymptomatic and symptomatic infective, whereas $\gamma_A$,$\gamma_I$ and $\gamma_D$ are the recovery rates of classes $A,I,D$. Moreover, the constants $\mu$, $d_A$, $d_I$ and $d_H$ are representing, in this order, the natural death rate of the whole population and the disease-induced death rates affecting only from classes $D,A$ and $I$. Based on the abovementioned modelling assumptions, the proposed system is illustrated by the following schematic flow diagram: 
\begin{figure}[hbtp]
\centering
\includegraphics[scale=0.37]{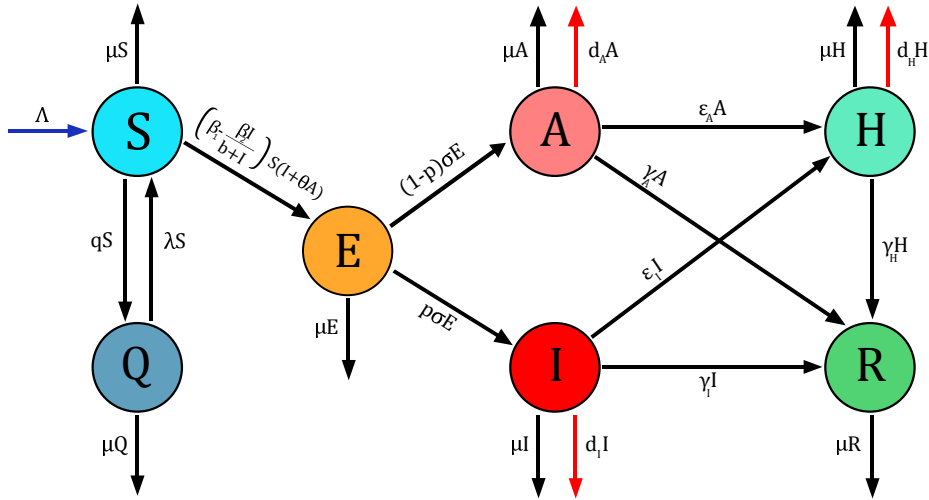}\label{figure}
\caption{Schematic diagram of the proposed COVID-19 epidemic model.}
\end{figure}
\par Generally speaking, the deterministic formulations analysis is very necessary and commonly used in the mathematical epidemiology, and it can be seen as a first tool for modelling new diseases spread and getting an overview of their asymptotic behaviour. However, the real phenomena are not always deterministic and may be subject to some uncertainties and randomness due to fluctuations in the natural environment \cite{kiouach2018stability,kiouach2019threshold,zhang2017threshold}. Therefore, an adapted version that considers this stochasticity is requisite in the case of COVID-19. For this purpose, and like several works \cite{liu2019dynamics,cai2017stochastic,kiouach2019modeling}, we extend the system \eqref{detr} to the following probabilistic version that incorporates proportional Gaussian white noises: 
\begin{equation}\label{systo} 
\left\lbrace
 \begin{aligned}
	\mathrm{d}S	&=\left[\Lambda-\left(\beta_1-\beta_2\dfrac{I}{b+I}\right) S\left(I+\theta A\right)+\lambda Q -(\mu+q)S\right]\mathrm{d}t+\sigma_1S~ \textup{d}B_{1}(t),\\
		\mathrm{d}Q	&=\left[qS-\left(\mu+\lambda\right)Q\right]\mathrm{d}t+\sigma_{2}Q~\mathrm{d}B_2(t),\\
		\mathrm{d}E &=\left[\left(\beta_1-\beta_2\dfrac{I}{b+I}\right)S\left(I+\theta A\right)-(\mu+\sigma)E\right]\text{d}t+\sigma_{3}E~ \mathrm{d}B_3(t),\\
		\mathrm{d} A	&=\left[\left(1-p\right)\sigma E-\left(\mu+\varepsilon_A+\gamma_A+d_A\right)A\right]\text{d}t+\sigma_{4}A~\mathrm{d}B_4(t),\\
\text{d}I&=\left[\sigma p E-\left(\mu+\varepsilon_I+\gamma_I+d_I\right)I\right]\text{d}t+\sigma_5I~\text{d}B_5(t),\\	
\text{d}H& =\left[\varepsilon_I I+\varepsilon_A A-\left(\mu+d_H+\gamma_H \right)H\right]\text{d}t+\sigma_6H~\text{d}B_6(t),\\
\text{d}R &=\left[\gamma_H H+\gamma_I I+\gamma_A A-\mu R\right] \text{d}t+\sigma_7 R~\text{d}B_7(t).
\end{aligned}
\right.\vspace*{4pt}
\end{equation}
Here and subsequently, $\sigma_i~(i=1,\cdots,7)$ are denoting the positive intensities of the mutually independent Brownian motions $B_i~(i=1,\cdots,7)$. These latter, and all the random variables that we will meet in this paper, are presumed to be defined on a complete probability space $\left(\Omega,\mathfrak{F},\mathbb{P}\right)$ with a filtration $(\mathfrak{F}_t)_{t\geqslant 0}$, that satisfies the usual conditions, i.e., it is increasing and right continuous while $\mathfrak{F}_0$ contains all $\mathbb{P}-$null sets. 
\par The primary purpose of this paper is to provide an overview of COVID-19 dynamics and examine its various properties over a relatively long period. Indeed, we have taken the necessary time to observe the disease evolution in various countries and to compare its behaviour with the theoretical epidemic models presented in the literature, which permitted us later to propose a system that appears suitable and very adapted to the evolution of this new disease. Since the asymptotic analysis of epidemic models provides an excellent insight into the future pandemic situation, we decide in this study to treat it for our suggested system of COVID-19 on both deterministic and probabilistic levels. In the deterministic framework, we highlight several methods which make it possible to obtain, and under certain conditions, of course, the local and global stability of the disease-free equilibrium as well as the uniform persistence of the epidemic, without resorting to Lyapunov theory \cite{khalil2002nonlinear}. For the perturbed version of our model, we check in detail its well-posedness. Furthermore, we study the extinction case and present by using some new techniques a sufficient condition for the persistence of Coronavirus.
\par The rest of this manuscript is arranged as follows: in Section \ref{sec1}, we demonstrate that the deterministic system \eqref{detr} has a unique global positive solution. Next, we show the existence and uniqueness of disease-free and endemic equilibria of this system. Moreover, we investigate besides the local and global stability of disease-free equilibrium (DFE) the uniform persistence of the epidemic. In Section \ref{sec2}, we will first prove the existence and uniqueness of the positive solution of the stochastic system \eqref{systo}. Then, we show the extinction and persistence in the mean of COVID-19 under some conditions. In Section \ref{sec3}, we perform our work with some examples of numerical simulations to illustrate our theoretical results and the effect of control strategies (coverage media and quarantine). Finally, results are discussed in Section \ref{sec4}.
\section{Analysis of the deterministic Coronavirus model}\label{sec1}
This section is devoted to the study of the deterministic  COVID-19 model presented by the aforementioned differential system \eqref{detr},  but before starting our analysis, we will first associate to \eqref{detr} the following initial-value problem:
\begin{equation}\label{detr2}
\begin{cases}
\overbrace{\dfrac{\text{d}\displaystyle{\mathlarger{x}}(t)}{\text{d}t}}^{7\times 1}=\overbrace{F\left(\mathlarger{x}(t)\right)}^{7\times 1},\\
\mathlarger{\mathlarger{x}}(0)=\displaystyle{\mathlarger{x}_0}\in \mathbb{R}_+^7, 
\end{cases}
\end{equation}
where
\begin{equation}
\mathlarger{x}(t)=\big(\mathlarger{x}_i(t)\big)_{1\leqslant i\leqslant 7}=\big(S(t),Q(t),E(t),A(t),I(t),H(t),R(t) \big),\nonumber\\[5pt] 
\end{equation}
and
\begin{equation}\label{F}
F\left( \mathlarger{x}(t)\right)=\big(F_i\left(\mathlarger{x}(t)\right)\big)_{1\leqslant i\leqslant 7}=\left( \begin{matrix}
\Lambda-\left(\beta_1-\beta_2\dfrac{I}{b+I}\right)S\left(I+\theta A\right)+\lambda Q -(\mu+q)S \\ 
qS-\left(\mu+\lambda\right)Q \\ 
\left(\beta_1-\beta_2\dfrac{I}{b+I}\right) S\left(I+\theta A\right)-(\mu+\sigma)E \\ 
\left(1-p\right)\sigma E-\left(\mu+\varepsilon_A+\gamma_A+d_A\right)A \\ 
\sigma p E-\left(\mu+\varepsilon_I+\gamma_I+d_I\right)I \\ 
\varepsilon_I I+\varepsilon_A A-\left(\mu+d_H+\gamma_H\right)H \\ 
\gamma_H H+\gamma_I I+\gamma_A A-\mu R
\end{matrix} \right).
\end{equation}
\subsection{Well-posedness}
In this subsection, we will show that the Coronavirus model presented by system \eqref{detr} is well posed, in the sense that if $S_0,Q_0,E_0,A_0,I_0,H_0$ and $R_0$ are positive, then  the initial-value problem, or the Cauchy problem \cite{vrabie2004differential}, \eqref{detr2} admits one and only one solution which is global in time, positive, and bounded.     
\begin{theo}\label{thm1*}
If the initial value $\displaystyle{\mathlarger{x}}_0$ is in the positive orthant $\mathbb{R}_+^7$, then there exists a unique solution $\displaystyle{\mathlarger{x}(t)}$ to the system \eqref{detr2}. Moreover, this solution remains bounded and positive for all $t\geqslant0$.     
\end{theo}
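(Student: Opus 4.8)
The plan is to establish the four assertions of the theorem — local existence, uniqueness, positivity, and global boundedness — in sequence, following the classical programme for epidemic ODE systems. First I would observe that the vector field $F$ defined in \eqref{F} is continuously differentiable on an open neighbourhood of the closed positive orthant $\mathbb{R}_+^7$: the only non-polynomial ingredient is the rational term $I\mapsto \beta_2 I/(b+I)$, which is smooth wherever $b+I\neq 0$, and since $b>0$ we have $b+I\geqslant b>0$ throughout $\mathbb{R}_+^7$. Consequently $F$ is locally Lipschitz there, so the Cauchy--Lipschitz (Picard--Lindel\"of) theorem furnishes a unique maximal solution $x(t)$ on some interval $[0,T_{\max})$ with $T_{\max}\in(0,+\infty]$. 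This disposes of local existence and uniqueness simultaneously.

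Next I would prove positivity by showing that the vector field points inward on every face of the boundary $\partial\mathbb{R}_+^7$. Concretely, for each index $i$ one checks that whenever the $i$-th coordinate vanishes while the remaining coordinates are nonnegative, the corresponding component $F_i$ is nonnegative: for instance $F_1|_{S=0}=\Lambda+\lambda Q\geqslant 0$ and $F_3|_{E=0}=\left(\beta_1-\beta_2 I/(b+I)\right)S(I+\theta A)\geqslant 0$ — here the modelling hypothesis $\beta_1\geqslant\beta_2$ together with $I/(b+I)<1$ guarantees the bracket is positive — and analogously $F_2|_{Q=0}$, $F_4|_{A=0}$, $F_5|_{I=0}$, $F_6|_{H=0}$, $F_7|_{R=0}$ are all sums of nonnegative terms. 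A first-exit-time argument then forbids the trajectory from crossing the boundary: if $t_1\leqslant T_{\max}$ were the earliest instant at which some coordinate reached $0$, the inward-pointing inequality would force its derivative there to be nonnegative, contradicting a sign change. Hence $x(t)\in\mathbb{R}_+^7$ for every $t\in[0,T_{\max})$.

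For boundedness I would pass to the total population $N(t)=S+Q+E+A+I+H+R$. Summing the seven equations of \eqref{detr}, all internal transfer terms and the incidence term cancel, leaving $\dfrac{\mathrm{d}N}{\mathrm{d}t}=\Lambda-\mu N-d_A A-d_I I-d_H H\leqslant \Lambda-\mu N$, since the disease-death terms are nonnegative by the positivity just established. A Gronwall/comparison estimate then yields $N(t)\leqslant \dfrac{\Lambda}{\mu}+\left(N(0)-\dfrac{\Lambda}{\mu}\right)e^{-\mu t}$, so $N(t)\leqslant\max\{N(0),\Lambda/\mu\}$ on $[0,T_{\max})$; as each coordinate is nonnegative and dominated by $N$, the whole trajectory is confined to a fixed compact box.

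Finally, global existence follows from boundedness. Were $T_{\max}$ finite, the maximal-solution continuation theorem would force $x(t)$ to leave every compact subset of the domain as $t\to T_{\max}^-$; but the trajectory is trapped in the compact invariant region just constructed, a contradiction. Therefore $T_{\max}=+\infty$, and the solution is global, positive, and bounded, as claimed. I expect the only delicate point to be the positivity step — namely making the first-exit-time argument rigorous and verifying the sign of the media-reduced incidence bracket $\beta_1-\beta_2 I/(b+I)$, which is precisely where the assumption $\beta_1\geqslant\beta_2$ enters; the remaining steps are routine.
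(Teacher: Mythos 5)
Your proposal is correct and follows essentially the same route as the paper: local Lipschitz continuity plus Picard--Lindel\"of for existence and uniqueness of a maximal solution, the sign check $F_i\geqslant 0$ on the face $x_i=0$ for positive invariance of $\mathbb{R}_+^7$ (the paper delegates the rigorous version of your first-exit-time argument to Proposition 4.1 of Haddad--Chellaboina), the Gronwall bound on $N(t)$ for boundedness, and the escape-from-compacts continuation criterion for globality. No substantive difference.
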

\begin{proof}
First of all, it is obvious that $F$ is a locally Lipschitz continuous function on the open connected domain $\mathbb{R}_+^7$. Therefore, the famous Picard-Lindl\"{o}f theorem \cite{platzer2018logical}, called also the Cauchy-Lipschitz theorem \cite{ding2007approaches}, guarantees the existence of a unique maximal solution to the initial-value problem \eqref{detr2}. This solution is defined on an interval $[0,\tau_e)$, where $\tau_e$ is the explosion time \cite{haddad2011nonlinear}. Moreover, it can be easily seen from \eqref{detr} that for all $\big(S,Q,E,A,I,H,R\big)\in\mathbb{R}_+^7,$
\begin{equation*}
\left\lbrace
\begin{aligned}
F_1\left(0,Q,E,A,I,H,R\right)&=\Lambda+\lambda Q\geqslant 0,\\
F_2\left(S,0,E,A,I,H,R\right)&=qS\geqslant 0,\\
F_3\left(S,Q,0,A,I,H,R\right)&=\left(\beta_1-\beta_2\dfrac{I}{b+I}\right) S\left(I+\theta A\right)\geqslant 0,\\
F_4\left(S,Q,E,0,I,H,R\right)&=\sigma(1-p)E\geqslant 0,\\
F_5\left(S,Q,E,A,0,H,R\right)&=\sigma pE\geqslant 0,\\
F_6\left(S,Q,E,A,I,0,R\right)&=\varepsilon_I I+\varepsilon_A A\geqslant 0,\\
F_7\left(S,Q,E,A,I,H,0\right)&=\gamma_H H+\gamma_I I+\gamma_A A\geqslant 0.
\end{aligned}
\right.
\end{equation*}
So, by using Proposition 4.1 of \cite{haddad2005stability}, it follows immediately that $\mathbb{R}_+^7$ is positively invariant for the dynamical system \eqref{detr}, which means that any solution trajectory starting from the cone $\mathbb{R}_+^7$ will still inside it for all the future time $t\in[0,\tau_e)$. On the other hand, for any initial value $\big(S_0,Q_0,E_0,A_0,I_0,H_0,R_0\big)\in \mathbb{R}_+^7$, the total population $N(t)$ satisfies the following equality for all $t\in[0,\tau_e)$,
\begin{equation}\label{equa}
\dfrac{\textup{d}N(t)}{\textup{d}t}=\Lambda-\mu N(t)-d_A A(t)-d_I I(t)-d_H H(t).
\end{equation} 
From the positivity of the solution, we obtain 
\begin{equation}\label{gron}
\dfrac{\textup{d}N(t)}{\textup{d}t}\leqslant\Lambda-\mu N(t).
\end{equation}
Applying the well-known Gronwall's inequality in its differential form (see \cite[page 20]{duan2015introduction}) to \eqref{gron} yields
\begin{equation}\label{equa1}
N(t)\leqslant \dfrac{\Lambda}{\mu}+\left(N_0-\dfrac{\Lambda}{\mu}\right)e^{-\mu t}\leqslant\dfrac{\Lambda}{\mu}+N_0.
\end{equation}
Therefore, the solution of system \eqref{detr2} is bounded on $[0,\tau_e)$. In addition, this last fact together with Proposition A.1 of \cite{maury2018crowds} lead to the globality in time of our solution. Thus, the initial-value problem \eqref{detr2} is well posed as required.   
\end{proof}
\begin{rema}
In addition to its interesting mathematical interpretation, the importance of the preceding theorem lies essentially in the fact that it makes our model logical, realistic and biologically meaningful. 
\end{rema}
\subsection{Invariant, absorbing and attracting regions}
In the theory of ordinary differential equations, the first step after proving the well-posedness of a dynamical system is to get a first glimpse of its asymptotic behaviour's nature. For example, to know if its trajectories are bounded on a neighbourhood of infinity or not. For that, we will present in this subsection some special regions of $\mathbb{R}_+^7$ to which the solutions of \eqref{detr} are belongs always, converge, or at least remain close as $t\to\infty$. This situation is often described in terms of invariant, absorbing and attracting subsets which will be defined in the following.
\begin{defi}[Invariant region \cite{slotine1991applied,zill2011advanced}]A set $\Delta\subset\mathbb{R}^7$ is said to be an invariant region for \eqref{detr}, or more precisely a positively invariant region with respect to \eqref{detr}, if $\displaystyle{\mathlarger{x}(0)}\in\Delta$ implies $\displaystyle{\mathlarger{x}(t)}\in \Delta$ for all $t\geqslant 0$. That is, whenever $\displaystyle{\mathlarger{x}_0}$ is in $\Delta$, the solution $\mathlarger{\mathlarger{x}}(t)$ of \eqref{detr} which starts from $\displaystyle{\mathlarger{x}}_0$ remains in $\Delta$ for all future time. 
\end{defi}
\begin{rema}
It is fairly easy to see from Theorem \ref{thm1*} that $\mathbb{R}_+^7$ is a positively invariant set for \eqref{detr} in the sense of the previous definition.
\end{rema}  
\begin{defi}[Absorbing region \cite{milani2004introduction}]
We say that a region $\Delta\subset\mathbb{R}^7_+$ is absorbing for \eqref{detr} if there is an open set $\mathcal{U}\subset\mathbb{R}^7_+$ containing it such that $\displaystyle{\mathlarger{x}(0)}\in\mathcal{U}$ implies the existence of some $T>0$ for which $\displaystyle{\mathlarger{x}(t)}\in \Delta$ whenever $t\geqslant T.$ 
In other words, any solution of \eqref{detr} that starts from $\mathcal{U}$, will necessarily enter and stay in $\Delta$ after a certain time. In the particular case of $~\mathcal{U}=\mathbb{R}^7_+$, the set $\Delta$ is called globally absorbent for \eqref{detr}. 
\end{defi}
\begin{rema}
The existence of a bounded absorbing set is considered as a specific and interesting property of the system, and generally
this property is called dissipativity \cite{milani2004introduction}.
\end{rema}
\begin{defi}[Attracting region \cite{milani2004introduction,khalil2002nonlinear}]
A set $\Delta\subset\mathbb{R}^7_+$ is called an attracting region for the system \eqref{detr} if there exists an open set $\mathcal{U}\subset\mathbb{R}^7_+$ containing it such that $$\lim\limits_{t\to\infty}\operatorname{dist}\left(\displaystyle{\mathlarger{x}}(t,\displaystyle{\mathlarger{x}}_0)\textbf{,}\Delta\right)=0~~\text{for all}~\displaystyle{\mathlarger{x}}_0\in\mathcal{U}.$$ 
 Here, $\displaystyle{\mathlarger{x}}(t,\displaystyle{\mathlarger{x}}_0)$ denotes the solution of \eqref{detr} that satisfies the initial condition $\displaystyle{\mathlarger{x}}(0)=\displaystyle{\mathlarger{x}_0}$ and $\operatorname{dist}\left(\textup{p}\textbf{,}\mathbb{B}\right)$ is the distance from a point $\textup{p}\in\mathbb{R}^7$ to a subset $\mathbb{B}\subset\mathbb{R}^7 $, that is, the smallest distance from $\textup{p}$ to any point in $\mathbb{B}$. More explicitly,
 $$\operatorname{dist}\left(\textup{p}\textbf{,}\mathbb{B}\right):=\inf\limits_{x\in \mathbb{B}}\|x-\textup{p}\|.$$ 
 If $~\mathcal{U}=\mathbb{R}_+^7$, the region $\Delta$ will be said to be globally attracting for \eqref{detr}.
\end{defi}
\begin{rema}
Clearly, an absorbent set is always attracting, but the converse is generally false (see \cite{milani2004introduction}).
\end{rema}
After having defined the notions of invariant, absorbing and attracting sets, we will turn now to the main results of this subsection. But before doing so, let us first introduce these notations which will be used from now on to simplify the writing: 
\begin{equation}\label{nota}
 S^o=\dfrac{\Lambda}{\mu}\times\dfrac{\mu+\lambda}{\mu+q+\lambda}~~\text{and}~~
 Q^o=\dfrac{\Lambda}{\mu}\times\dfrac{q}{\mu+q+\lambda}.
\end{equation}  
\begin{theo}\label{thinv}
Let $\eta,\eta^\prime\geqslant 0$ be two nonnegative numbers, and let $S^o,Q^o$ be defined by \eqref{nota}. The following regions:
\begin{enumerate}
\item $\mathcal{D}^\eta=\left\{\left(S,Q,E,A,I,H,R\right)\in\mathbb{R}_+^7\mid S+Q+E+A+I+H+R\leqslant\frac{\Lambda}{\mu}+\eta\right\},$
\item $\mathcal{D}_{\eta^\prime}=\left\{\left(S,Q,E,A,I,H,R\right)\in\mathbb{R}_+^7\mid S+Q+E+A+I+H+R\geqslant\frac{\Lambda}{\mu+d_I+d_A+d_H}-\eta^\prime\right\},$
\item $\mathcal{D}_{\eta^\prime}^\eta=\left\{\left(S,Q,E,A,I,H,R\right)\in\mathbb{R}_+^7\mid \frac{\Lambda}{\mu+d_I+d_A+d_H}-\eta^\prime\leqslant S+Q+E+A+I+H+R\leqslant \frac{\Lambda}{\mu}+\eta \right\},$
\item $\mathcal{S}=\left\{\left(S,Q,E,A,I,H,R\right)\in\mathbb{R}_+^7\mid S\leqslant S^o~~\text{and}~~ Q\leqslant Q^o\right\},$
\end{enumerate} 
are positively invariant with respect to the dynamical system \eqref{detr}. 
\end{theo}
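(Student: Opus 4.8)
The plan is to handle the three ``total-population'' regions $\mathcal{D}^\eta$, $\mathcal{D}_{\eta'}$ and $\mathcal{D}^\eta_{\eta'}$ by controlling the scalar quantity $N=S+Q+E+A+I+H+R$ through its own ODE \eqref{equa}, and to treat the rectangle $\mathcal{S}$ by a face-by-face tangency analysis of the $(S,Q)$ block. Throughout I would take for granted, from Theorem \ref{thm1*}, that a trajectory started in $\mathbb{R}_+^7$ keeps all seven coordinates nonnegative; since each of the four sets is a subset of $\mathbb{R}_+^7$, it then suffices to show that the extra constraints defining it are preserved.

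For $\mathcal{D}^\eta$, the nonnegativity of $d_AA+d_II+d_HH$ turns \eqref{equa} into exactly the inequality \eqref{gron}, so the Gronwall estimate \eqref{equa1} already gives
\begin{equation*}
N(t)\leqslant\frac{\Lambda}{\mu}+\Bigl(N_0-\frac{\Lambda}{\mu}\Bigr)e^{-\mu t};
\end{equation*}
if $N_0\leqslant\frac{\Lambda}{\mu}+\eta$ then the correction term never exceeds $\eta e^{-\mu t}\leqslant\eta$, and invariance follows. For the lower bound $\mathcal{D}_{\eta'}$ I would instead use $A,I,H\leqslant N$ to get $d_AA+d_II+d_HH\leqslant(d_A+d_I+d_H)N$, whence \eqref{equa} yields
\begin{equation*}
\frac{\textup{d}N}{\textup{d}t}\geqslant\Lambda-\bigl(\mu+d_I+d_A+d_H\bigr)N.
\end{equation*}
Comparing $N$ with the solution of the associated linear equation (with rate $\kappa=\mu+d_I+d_A+d_H$) gives $N(t)\geqslant\frac{\Lambda}{\kappa}+\bigl(N_0-\frac{\Lambda}{\kappa}\bigr)e^{-\kappa t}$, and the constraint $N_0\geqslant\frac{\Lambda}{\kappa}-\eta'$ propagates to all $t$ because $e^{-\kappa t}\leqslant1$. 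Finally $\mathcal{D}^\eta_{\eta'}=\mathcal{D}^\eta\cap\mathcal{D}_{\eta'}$, so its invariance is immediate.

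The delicate case is the rectangle $\mathcal{S}=\{S\leqslant S^o,\ Q\leqslant Q^o\}\cap\mathbb{R}_+^7$, because I must now control the individual coordinates $S$ and $Q$, whose equations are coupled to the rest of the system through the infection term. I would verify the Nagumo tangency condition on the two ``upper'' faces (the faces $S=0$ and $Q=0$ being covered by positivity). On $S=S^o$ the infection term is nonnegative and enters with a minus sign, so for every admissible $Q\leqslant Q^o$,
\begin{equation*}
\frac{\textup{d}S}{\textup{d}t}\leqslant\Lambda+\lambda Q^o-(\mu+q)S^o,
\end{equation*}
and substituting \eqref{nota} one finds $\lambda Q^o-(\mu+q)S^o=\frac{\Lambda}{\mu(\mu+q+\lambda)}\bigl[\lambda q-(\mu+q)(\mu+\lambda)\bigr]=-\Lambda$, since the bracket collapses to $-\mu(\mu+q+\lambda)$; hence $\frac{\textup{d}S}{\textup{d}t}\leqslant0$ there. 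Symmetrically, on $Q=Q^o$ one has $\frac{\textup{d}Q}{\textup{d}t}=qS-(\mu+\lambda)Q^o\leqslant qS^o-(\mu+\lambda)Q^o=0$, because both terms equal $\frac{\Lambda}{\mu}\cdot\frac{q(\mu+\lambda)}{\mu+q+\lambda}$. Thus the field points inward (or is tangent) on each face, in particular at the corner $(S^o,Q^o)$ where both derivatives are $\leqslant0$, and a Nagumo-type subtangentiality argument, in the same spirit as the positivity proof of Theorem \ref{thm1*}, yields the invariance of $\mathcal{S}$.

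I expect the two algebraic identities $\Lambda+\lambda Q^o-(\mu+q)S^o=0$ and $qS^o-(\mu+\lambda)Q^o=0$ to be the real crux: they say exactly that $(S^o,Q^o)$ is the disease-free rest value of the $(S,Q)$ block, and the whole argument for $\mathcal{S}$ hinges on the sign of the infection term, which can only push $\frac{\textup{d}S}{\textup{d}t}$ further below zero on the critical face. The remaining work is bookkeeping: checking that the tangency conditions on the finitely many faces of the box, combined with the positivity from Theorem \ref{thm1*}, indeed suffice to conclude invariance of the full seven-dimensional set.
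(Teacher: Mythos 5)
Your proposal is correct and follows essentially the same route as the paper: a Gronwall/comparison argument on $N=S+\cdots+R$ for $\mathcal{D}^\eta$ and $\mathcal{D}_{\eta'}$, invariance of the intersection for $\mathcal{D}^\eta_{\eta'}$, and a Bony--Brezis/Nagumo tangency check on the faces $S=S^o$ and $Q=Q^o$ (resting on the same two identities $\Lambda+\lambda Q^o-(\mu+q)S^o=0$ and $qS^o-(\mu+\lambda)Q^o=0$) for $\mathcal{S}$. As a minor point, your lower comparison solution correctly carries the decay rate $e^{-\kappa t}$ with $\kappa=\mu+d_A+d_I+d_H$, where the paper's display \eqref{equa2} writes $e^{-\mu t}$; this does not affect either conclusion since the exponential is bounded by one.
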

\begin{proof}
For the reader's convenience, we will divide the proof into four parts, each of which will be devoted to showing the invariance of one of the sets appearing in the last theorem's statement.\\
Throughout this demonstration, $\displaystyle{\mathlarger{x}_0}=\big(S_0,Q_0,E_0,A_0,I_0,H_0,R_0\big)\in \mathbb{R}_+^7$ is a given value and $\displaystyle{\mathlarger{x}}(t)\hspace*{-1pt}=\hspace*{-1pt}\big(S(t),Q(t),E(t),A(t)\\,I(t),H(t), R(t)\big)$ is the solution of \eqref{detr} that starts from it.
\begin{enumerate}
\item Let $\eta\geqslant 0$ and suppose that $\displaystyle{\mathlarger{x}_0}\in \mathcal{D}^\eta$. From \eqref{equa}, we have for all $t\geqslant 0$
\begin{equation}
N(t)\leqslant \dfrac{\Lambda}{\mu}+\left(N_0-\dfrac{\Lambda}{\mu}\right)e^{-\mu t},
\end{equation}
and since $\displaystyle{\mathlarger{x}_0}\in \mathcal{D}^\eta$, it follows that $N_0\in \left(0,\dfrac{\Lambda}{\mu}+\eta\right)$, then
$$N(t)\leqslant \dfrac{\Lambda}{\mu}+\left(N_0-\dfrac{\Lambda}{\mu}\right)e^{-\mu t}\leqslant \dfrac{\Lambda}{\mu}+\eta\times e^{-\mu t}\leqslant \dfrac{\Lambda}{\mu}+\eta.$$
Therefore, $\displaystyle{\mathlarger{x}}(t)\in \mathcal{D}^\eta$ for all $t\geqslant 0$, which means that $\mathcal{D}^\eta$ is positively invariant for \eqref{detr}.  
\item According to \eqref{equa1}, we have 
$$\dfrac{\textup{d}N(t)}{\textup{d}t}=\Lambda-\mu N(t)-d_A A(t)-d_I I(t)-d_H H(t),\quad \forall t\geqslant 0,$$
and by using the positivity of solutions we get for all $t\geqslant 0$  
$$\dfrac{\textup{d}N(t)}{\textup{d}t}\geqslant \Lambda-\left(\mu+d_A+d_H+d_I\right) N(t).$$
Thus,
\begin{equation}\label{equa2}
N(t)\geqslant \dfrac{\Lambda}{\mu+d_A+d_H+d_I}+\left(N_0-\dfrac{\Lambda}{\mu+d_A+d_H+d_I}\right)e^{-\mu t}.
\end{equation}
Then, if $\displaystyle{\mathlarger{x}_0}\in\mathcal{D}_{\eta^\prime}$, we obtain
$$N(t)\geqslant \dfrac{\Lambda}{\mu+d_A+d_H+d_I}-\eta^\prime e^{-\mu t}\geqslant \dfrac{\Lambda}{\mu+d_A+d_H+d_I}-\eta^\prime,$$
which is equivalent to saying that $\mathcal{D}_{\eta^\prime}$ is positively invariant with respect to \eqref{detr}.
\item The invariance of region $\mathcal{D}_{\eta^\prime}^\eta= \mathcal{D}_{\eta^\prime} \cap{\mathcal{D}}^\eta $ with respect to system \eqref{detr} follows immediately from the fact that the intersection of two positively invariant sets is also a positively invariant set (see Proposition 2.31, \cite{haddad2005stability}).
\item Let $\displaystyle{\mathlarger{x}_0}\in\mathcal{S}$. From the system \eqref{detr}, we remark that
\begin{align*}
  \left.\frac{\text{d}S}{\text{d}t}\right|_{S=0}\hspace{-10pt}&=\Lambda+\lambda Q\geqslant 0, & \left.\frac{\text{d}Q}{\text{d}t}\right|_{Q=0}\hspace{-12pt}&=qS\geqslant 0,  & \left.\frac{\text{d}E}{\text{d}t}\right|_{E=0}\hspace{-10pt}&=\left(\beta_1-\dfrac{\beta_2 I}{b+I}\right) S\left(I+\theta A\right)\geqslant 0,\\
   \left.\frac{\text{d}A}{\text{d}t}\right|_{A=0}\hspace{-10pt}&=\sigma(1-p)E\geqslant 0, & \left.\frac{\text{d}I}{\text{d}t}\right|_{I=0}\hspace{-10pt}&=\sigma pE\geqslant 0,  & \left.\frac{\text{d}H}{\text{d}t}\right|_{H=0}\hspace{-12pt}&=\varepsilon_I I+\varepsilon_A A\geqslant 0,\\
 \left.\frac{\text{d}R}{\text{d}t}\right|_{R=0}\hspace{-10pt}&=\gamma_H H\hspace{-1pt}+\hspace{-1pt}\gamma_I I\hspace{-1pt}+\hspace{-1pt}\gamma_A A\geqslant\hspace{-1pt} 0, & \left.\frac{\text{d}Q}{\text{d}t}\right|_{Q=Q^o}\hspace{-18pt}&=q(S\hspace{-1pt}-\hspace{-1pt}S^o)\leqslant \hspace{-1pt} 0, &  \left.\frac{\text{d}S}{\text{d}t}\right|_{S=S^o}\hspace{-18pt}&= \lambda\left(Q-Q^o\right)\hspace{-1pt}-\hspace{-1pt}\beta S^o \left(I\hspace{-1pt}+\hspace{-1pt}\theta A\right)\leqslant 0. 
\end{align*}    
Thus, the vector field on each boundary 
point of the region $\mathcal{S}$ is pointing toward its interior, and this prevents any solution that starts in $\mathcal{S}$ from leaving it (see \cite{zill2011advanced}). Therefore, $\mathcal{S}$ is an invariant region for \eqref{detr}.
\end{enumerate}
Hence, the proof is completed.
\end{proof}
\begin{rema}
The method used to prove the last point of Theorem \ref{thinv} is known as the Bony-Brezis theorem, and for more details on this, we refer the reader to \cite{redheffer1972theorems}.
\end{rema}

From now on, we will keep the notations of Theorem \ref{thinv}\textbf{.}
\begin{theo}\label{inva}
For any positive numbers $\eta$ and $\eta^\prime$, the set $\mathcal{D}_{\eta^\prime}^\eta$ is globally absorbent with respect to the system \eqref{detr}.    
\end{theo}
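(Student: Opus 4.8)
The plan is to reduce the statement to the scalar dynamics of the total population $N(t)=S+Q+E+A+I+H+R$, since the region $\mathcal{D}_{\eta^\prime}^\eta$ is described purely through $N$. Write $\kappa:=\mu+d_A+d_H+d_I$. Fix an arbitrary $\displaystyle{\mathlarger{x}}_0\in\mathbb{R}_+^7$; by Theorem \ref{thm1*} the associated solution exists for all $t\geqslant 0$, stays in $\mathbb{R}_+^7$ and is bounded, so it only remains to show that $N(t)$ enters (and then stays in) the band $\left[\frac{\Lambda}{\kappa}-\eta^\prime,\ \frac{\Lambda}{\mu}+\eta\right]$ after some finite time. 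Because here $\mathcal{U}=\mathbb{R}_+^7$, this is a genuinely global absorption claim, so I must allow $N_0$ to begin anywhere in $(0,\infty)$.

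The two one-sided estimates I need have already been produced in the proofs of Theorems \ref{thm1*} and \ref{thinv}: the upper control \eqref{equa1}, namely $N(t)\leqslant\frac{\Lambda}{\mu}+\left(N_0-\frac{\Lambda}{\mu}\right)e^{-\mu t}$, and the lower control \eqref{equa2}, namely $N(t)\geqslant\frac{\Lambda}{\kappa}+\left(N_0-\frac{\Lambda}{\kappa}\right)e^{-\mu t}$, both valid for all $t\geqslant 0$. The structural point that makes absorption work is that the two limiting values $\frac{\Lambda}{\kappa}$ and $\frac{\Lambda}{\mu}$ lie strictly inside the enlarged band (because $\eta,\eta^\prime>0$), which leaves room for the exponentially decaying transients to be swallowed.

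Concretely, on the upper side I would split into $N_0\leqslant\frac{\Lambda}{\mu}$, where $N(t)\leqslant\frac{\Lambda}{\mu}\leqslant\frac{\Lambda}{\mu}+\eta$ for every $t\geqslant 0$, and $N_0>\frac{\Lambda}{\mu}$, where the transient $\left(N_0-\frac{\Lambda}{\mu}\right)e^{-\mu t}$ drops below $\eta$ as soon as $t\geqslant T_1:=\frac{1}{\mu}\ln\!\left(\frac{N_0-\Lambda/\mu}{\eta}\right)$. Symmetrically, on the lower side the case $N_0\geqslant\frac{\Lambda}{\kappa}$ is immediate, since the differential inequality $\frac{\mathrm{d}N}{\mathrm{d}t}\geqslant\Lambda-\kappa N$ underlying \eqref{equa2} keeps $N(t)$ at or above $\frac{\Lambda}{\kappa}$, while for $N_0<\frac{\Lambda}{\kappa}$ the negative transient in \eqref{equa2} exceeds $-\eta^\prime$ once $t\geqslant T_2:=\frac{1}{\mu}\ln\!\left(\frac{\Lambda/\kappa-N_0}{\eta^\prime}\right)$. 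Putting $T:=\max\{T_1,T_2,0\}$, both inequalities hold for every $t\geqslant T$, i.e. $\displaystyle{\mathlarger{x}}(t)\in\mathcal{D}_{\eta^\prime}^\eta$; and since $\mathcal{D}_{\eta^\prime}^\eta$ was already shown to be positively invariant in Theorem \ref{thinv}, the solution cannot leave it thereafter. As $\displaystyle{\mathlarger{x}}_0$ was arbitrary in $\mathbb{R}_+^7$, global absorption follows.

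I do not anticipate a serious obstacle: the genuinely analytic work — deriving the two differential inequalities and integrating them via Gronwall — has already been carried out, so what is left is the bookkeeping of the absorption time $T$ and the case split according to where $N_0$ sits relative to the band. The only point deserving a little care is that membership in $\mathcal{D}_{\eta^\prime}^\eta$ constrains the single quantity $N$, so positivity from Theorem \ref{thm1*} together with the scalar band is exactly what is required, and no separate argument on the individual compartments $S,Q,\dots,R$ is needed.
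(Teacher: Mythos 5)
Your proposal is correct and follows essentially the same route as the paper's own proof: both reduce the claim to the scalar quantity $N(t)$ and use the two Gronwall/comparison envelopes \eqref{equa1} and \eqref{equa2} to see that the exponentially decaying transients are eventually absorbed by the margins $\eta$ and $\eta'$. Your version is in fact slightly more careful than the paper's, which passes loosely to ``$\lim_{t\to\infty}N(t)$'' without the explicit case split on $N_0$ or the explicit absorption time $T$ that you supply.
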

\begin{proof}
Let $\eta,\eta^{\prime}>0$ be two positive numbers, and let $x_0\in\mathbb{R}_+^7$. Based on $\eqref{equa}$ and $\eqref{equa2}$, we observe that for all $t\geqslant0$
\begin{equation}\label{enca}
\dfrac{\Lambda}{\mu+d_A+d_H+d_I}+\left(N_0-\dfrac{\Lambda}{\mu+d_A+d_H+d_I}\right)e^{-\mu t} \leqslant N(t)\leqslant \dfrac{\Lambda}{\mu}+\left(N_0-\dfrac{\Lambda}{\mu}\right)e^{-\mu t}.
\end{equation} 
By letting $t$ tend to infinity in \eqref{enca}, we obtain
  $$\dfrac{\Lambda}{\mu+d_A+d_H+d_I}\leqslant \lim_{t\to\infty} N(t)\leqslant \dfrac{\Lambda}{\mu}.$$
So, necessarily there is a certain time, depending in $\eta$ and $\eta^\prime$, from which we have
$$\dfrac{\Lambda}{\mu+d_A+d_H+d_I}\leqslant N(t)\leqslant\dfrac{\Lambda}{\mu}.$$
Therefore, $\mathcal{D}_{\eta^{\prime}}^\eta$ is a globally absorbent region for \eqref{detr}. 
\end{proof}
\begin{rema}
Needless to say, the last theorem implies that the system \eqref{detr} is dissipative. Moreover $\mathcal{D}_{\eta^\prime}, \mathcal{D}^\eta$, or generally any region containing $\mathcal{D}_{\eta^\prime}^\eta$, is also going to be globally absorbent for \eqref{detr}. 
\end{rema}
Now, let us mention an important consequence of the previous theorem.
\begin{coro}
With respect to the dynamical system \eqref{detr}, the region
  $$\mathcal{D}_0^0=\left\{\left(S,Q,E,A,I,H,R\right)\in\mathbb{R}_+^7\mid \frac{\Lambda}{\mu+d_I+d_A+d_H}\leqslant S+Q+E+A+I+H+R\leqslant \frac{\Lambda}{\mu}\right\}$$ is globally attracting. 
\end{coro}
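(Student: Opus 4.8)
The plan is to exploit the fact that the two defining constraints of $\mathcal{D}_0^0$ involve only the total population $N(t)=S+Q+E+A+I+H+R$, so that the seven-dimensional attraction statement can be reduced to a one-dimensional statement about $N(t)$. Writing $c_1=\frac{\Lambda}{\mu+d_A+d_H+d_I}$ and $c_2=\frac{\Lambda}{\mu}$, we have $\mathcal{D}_0^0=\{x\in\mathbb{R}_+^7\mid c_1\leqslant N\leqslant c_2\}$, and I would verify the attracting property directly with $\mathcal{U}=\mathbb{R}_+^7$. The starting point is the two-sided estimate \eqref{enca} already established in the proof of Theorem \ref{inva}, which traps $N(t)$ between two exponentials converging respectively to $c_1$ and $c_2$.

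The key geometric step is to bound $\operatorname{dist}(x(t),\mathcal{D}_0^0)$ by the one-dimensional distance from $N(t)$ to the interval $[c_1,c_2]$. For this I would use a radial scaling argument. If $N(t)>c_2$, the point $y=\frac{c_2}{N(t)}x(t)$ belongs to $\mathcal{D}_0^0$, being nonnegative with coordinate sum exactly $c_2$; since $\|x(t)\|_2\leqslant\|x(t)\|_1=N(t)$ for a nonnegative vector, this gives $\|x(t)-y\|_2=\frac{N(t)-c_2}{N(t)}\|x(t)\|_2\leqslant N(t)-c_2$. The symmetric choice $y=\frac{c_1}{N(t)}x(t)$ treats the case $N(t)<c_1$ (legitimate because \eqref{enca} forces $N(t)>0$ for $t>0$) and yields the bound $c_1-N(t)$, while for $c_1\leqslant N(t)\leqslant c_2$ the distance is $0$. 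Altogether this produces the clean inequality
$$\operatorname{dist}\big(x(t),\mathcal{D}_0^0\big)\leqslant\operatorname{dist}\big(N(t),[c_1,c_2]\big).$$

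It then remains to pass to the limit. From \eqref{enca} one reads off $N(t)-c_2\leqslant(N_0-c_2)e^{-\mu t}$ and $c_1-N(t)\leqslant(c_1-N_0)e^{-\mu t}$, so the right-hand side of the displayed inequality is dominated by a quantity of order $e^{-\mu t}$ and therefore tends to $0$ as $t\to\infty$, uniformly with respect to the sign of $N_0-c_2$ and $c_1-N_0$ and independently of $x_0\in\mathbb{R}_+^7$. This gives $\lim_{t\to\infty}\operatorname{dist}(x(t),\mathcal{D}_0^0)=0$ for every initial datum in $\mathbb{R}_+^7$, which is exactly the claim that $\mathcal{D}_0^0$ is globally attracting.

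I expect the genuine obstacle to lie in the second step rather than in the limiting argument: one must control the full Euclidean distance to the seven-dimensional slab $\mathcal{D}_0^0$ while respecting the positivity constraint $x\in\mathbb{R}_+^7$, and it is not a priori evident that this distance is governed by the scalar gap between $N(t)$ and $[c_1,c_2]$. The radial scaling toward the origin is what makes this transparent, since $\mathcal{D}_0^0$ is stable under the maps $x\mapsto\frac{c}{N}x$ for $c\in[c_1,c_2]$, and the elementary norm inequality $\|\cdot\|_2\leqslant\|\cdot\|_1$ turns the factor $\|x(t)\|_2/N(t)$ into a harmless constant $\leqslant 1$.
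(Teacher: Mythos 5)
Your proof is correct, and it takes a genuinely different route from the paper's. The paper deduces the attraction property from the identity $\mathcal{D}_0^0=\bigcap_{\eta>0}\mathcal{D}_\eta^\eta$ together with the fact that each $\mathcal{D}_\eta^\eta$ is globally absorbent (Theorem \ref{inva}), and then argues informally that letting $\eta\to 0$ forces the trajectories to converge to $\mathcal{D}_0^0$; no explicit distance estimate is given. You instead verify the definition of a globally attracting set directly: the radial rescaling $x\mapsto\frac{c}{N}x$ with $c\in\{c_1,c_2\}$ produces an explicit point of $\mathcal{D}_0^0$, and the elementary inequality $\|x\|_2\leqslant\|x\|_1=N$ for nonnegative vectors converts the scalar gap $\operatorname{dist}(N(t),[c_1,c_2])$ into a bound on the seven-dimensional Euclidean distance, after which \eqref{enca} gives exponential decay. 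Both arguments rest on the same two-sided estimate \eqref{enca}, but your version is more quantitative and actually supplies the step the paper leaves implicit, namely that closeness of $N(t)$ to $[c_1,c_2]$ controls $\operatorname{dist}\big(x(t),\mathcal{D}_0^0\big)$ and not merely membership in the enlarged slabs; it yields, as a bonus, an explicit rate $O(e^{-\mu t})$. The paper's intersection argument is shorter and reuses the absorbing-set machinery already built, but it is less self-contained as a proof of attraction in the metric sense. The only points to keep in mind in your write-up are that the rescaling in the case $N(t)<c_1$ requires $N(t)>0$, which you correctly note follows from positivity of the solution (with $N_0>0$), and that the exponent in the lower bound of \eqref{enca} is immaterial to your limit since any decaying exponential suffices.
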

\begin{proof}
The attraction property of the set $\mathcal{D}_{0}^0$ with respect to \eqref{detr} comes easily from the following observation: 
$$\mathcal{D}_0^0=\bigcap_{\eta>0}\mathcal{D}_{\eta}^\eta~,$$
and the fact that the region $\mathcal{D}_{\eta}^\eta$ is globally absorbent with respect to \eqref{detr} for any positive number $\eta$ (see Theorem \ref{inva} in the particular case $\eta=\eta^\prime$). The last passage can be explained by saying that reducing the $\eta$ value, and bringing it close to zero leaves no choice for the solutions trajectories except converging to $\mathcal{D}_0^0$, which is the desired conclusion.
\end{proof}
\subsection{The Basic reproduction number \texorpdfstring{$\mathcal{R}_0$}{R0}}
Epidemiologically, the basic reproduction ratio $\mathcal{R}_0$ is the number of secondary cases produced by one infected individual in an entirely susceptible population during its period as an infective. In the literature, several techniques have been proposed for the calculation of $\mathcal{R}_0$ \cite[chapter 5]{martcheva2015introduction}, but the most known is that of the next generation approach introduced by van den Driessche and Watmough in \cite{van2002reproduction}. According to this method, which will be used in our case, $\mathcal{R}_0$ is the spectral radius $\rho$ of the next generation matrix defined by $M=FV^{-1}$, where $V$ and $F$ are respectively the matrices expressing the infections transition and the emergence of new infected cases in the different contaminated compartments of the model. By observing our system \eqref{detr}, we can easily notice that it has two essential and particular properties. The first is the fact that it admits one, and only one, disease-free equilibrium (DFE) $\mathcal{E}^o=\left(S^o,Q^o,0,0,0,0,0\right)$, where $S^o$ and $Q^o$ are defined by \eqref{nota}. The second is the possibility of rearranging its equations and rewriting it in the modified form
\begin{equation}\label{detr3}
\dfrac{\text{d}\displaystyle{\widetilde{\mathlarger{x}}(t)}}{\text{d}t}=\widetilde{F}\left(\widetilde{\mathlarger{x}}(t)\right),
\end{equation}  
where $\displaystyle{\widetilde{\mathlarger{x}}}=\big(\hspace{-23pt}\overbrace{E,A,I}^{\text{\footnotesize{infected components}}}\hspace{-23pt},S,Q,H,R\big)$ and $\widetilde{F}=\big(\hspace{-23pt}\overbrace{F_3,F_4,F_5,}^{\text{\footnotesize{equations of }} E,A ~\text{and}~I}\hspace{-20pt}F_1,F_2,F_6,F_7\big).$ 
Needless to say, the last arrangement makes $\widetilde{\mathcal{E}}^o=\big(0,0,0,S^o,Q^o,0,0\big)$ as the unique free equilibrium point of \eqref{detr3}. As stated in \cite{van2002reproduction}, we can split the right-hand side of \eqref{detr3} in the following way: 
 \begin{equation}
\dfrac{\text{d}\displaystyle{\widetilde{\mathlarger{x}}(t)}}{\text{d}t}=\widetilde{F}\left(\widetilde{\mathlarger{x}}(t)\right)=\mathcal{F}\left(\widetilde{\mathlarger{x}}(t)\right)-\mathcal{V}\left(\widetilde{\mathlarger{x}}(t)\right),   
\end{equation}
where $\mathcal{F}$ is the appearance rate of new infections vector and  $\mathcal{V}$ is the remaining transitional terms vector represented respectively in this case by   
\begin{equation*}
\mathcal{F}=\left(\begin{matrix}
\left(\beta_1-\beta_2\dfrac{I}{b+I}\right)S\left(I+\theta A\right)\\
0 \\ 
0 \\
0 \\
0 \\
0 \\
0 
\end{matrix}\right)
~~\text{and}~~\mathcal{V}=\left( \begin{matrix}
(\mu+\sigma)E \\ 
\left(\mu+\varepsilon_A+\gamma_A+d_A\right)A-\left(1-p\right)\sigma E \\ 
\left(\mu+\varepsilon_I+\gamma_I+d_I\right)I -\sigma p E\\
\left(\beta_1-\beta_2\dfrac{I}{b+I}\right)S\left(I+\theta A\right)+(\mu+q)S -\lambda Q -\Lambda\\ 
\left(\mu+\lambda\right)Q-qS \\ 
\left(\mu+d_H+\gamma_H\right)H-\varepsilon_A A-\varepsilon_I I \\ 
\mu R-\gamma_H H-\gamma_I I-\gamma_A A
\end{matrix}\right).
\end{equation*}
The corresponding Jacobian matrices evaluated around the DFE are respectively block decomposed as follows:
\begin{equation}
D\mathcal{F}(\widetilde{\mathcal{E}}^o)=\left(\begin{tabular}{c|c}
\rule[0ex]{0pt}{1.5ex} $F$ & $ \scalebox{1.2}{\textbf{0}}_{3,4}$\\ 
\hline 
\rule[0ex]{0pt}{0ex} $\scalebox{1.2}{\textbf{0}}_{4,3}$ & $\scalebox{1.2}{\textbf{0}}_{4,4}$ \\ 
\end{tabular} \right)~~\text{and}~~D\mathcal{V}(\widetilde{\mathcal{E}}^o)= \left(\begin{tabular}{c|c}
\rule[0ex]{0pt}{0ex} $V$ & $\scalebox{1.2}{\textbf{0}}_{3,4}$ \\ 
\hline 
\rule[0ex]{0pt}{0ex} $W_3$ & $W_4$  
\end{tabular} \right),
\end{equation}
where $\scalebox{1.2}{\text{\textbf{0}}} _{n,p}$ denotes the null matrix (i.e, a matrix filled with zeros) of dimension $n\times p$ and 
\begin{align*}
F&=\begin{pmatrix}
0 & \theta\beta_1S^o & \beta_1S^o \\ 
0 & 0 & 0 \\ 
0 & 0 & 0
\end{pmatrix},\quad & V&=\begin{pmatrix}
\mu+\sigma & 0 & 0 \\ 
-(1-p)\sigma & \mu+d_A+\varepsilon_A+\gamma_A & 0 \\ 
-\sigma p & 0 & \mu+d_I+\varepsilon_I+\gamma_I
\end{pmatrix},\\
\nonumber\\
\end{align*}
\begin{align*}
 W_3&=\begin{pmatrix}
0 & \theta\beta_1S^o & 0 \\ 
0 & 0 & 0 \\ 
0 & -\varepsilon_A & -\varepsilon_I \\
0 & -\gamma_A & -\gamma_I 
\end{pmatrix}, \qquad\text{and}& W_4&=\left(\begin{tabular}{c|c}
\rule[0ex]{0pt}{0ex} \hspace*{-5pt}$\textbf{w}_1$ &\hspace*{-5pt} $\scalebox{1.2}{\textbf{0}}_{2,2}$\hspace*{-5pt} \\ 
\hline 
\rule[0ex]{0pt}{0ex} \hspace*{-5pt}$\scalebox{1.2}{\textbf{0}}_{2,2}$ & $\textbf{w}_4$
\hspace*{-5pt}\end{tabular} \right)=\begin{pmatrix}
\mu\hspace{-0.5pt} +\hspace{-0.5pt}q  & -\lambda & 0 & 0 \\ 
-q & \mu\hspace{-0.5pt} +\hspace{-0.5pt}\lambda  & 0 & 0 \\ 
0 & 0 & \mu\hspace{-0.5pt} +\hspace{-0.5pt} d_H\hspace{-1pt} +\hspace{-1pt} \gamma_H & 0 \\ 
0 & 0 & -\gamma_H & \mu
\end{pmatrix}.
\end{align*}
So, the next generation matrix $FV^{-1}$ is
\begin{equation*}
FV^{-1} =\begin{pmatrix}
\left(\dfrac{\theta\left(1-p\right)}{\mu\hspace{-1pt}+d_A\hspace{-1pt}+\hspace{-1pt}\varepsilon_A\hspace{-1pt}+\hspace{-1pt}\gamma_A\hspace{-1pt}}\hspace{-1pt}+\hspace{-1pt}\dfrac{p}{\mu+d_I+\varepsilon_I+\gamma_I}\right)\times\dfrac{\sigma\beta S^o}{\mu+\sigma} & \dfrac{\theta\beta_1S^o}{\mu\hspace{-1pt}+\hspace{-1pt}d_A\hspace{-1pt}+\hspace{-1pt}\varepsilon_A\hspace{-1pt}+\hspace{-1pt}\gamma_A} & \dfrac{\beta_1S^o}{\mu\hspace{-1pt}+d_I\hspace{-1pt}+\varepsilon_I\hspace{-1pt}+\hspace{-1pt}\gamma_I\hspace{-1pt}} \\ 
0 & 0 & 0 \\ 
0 & 0 & 0
\end{pmatrix}.
\end{equation*}
Consequently, the reproduction number associated to the Coronavirus model \eqref{detr} is given by:
\begin{equation}
\mathcal{R}_0 \triangleq\rho\left(FV^{-1}\right)=\left(\dfrac{\theta\left(1-p\right)}{\mu+d_A+\varepsilon_A+\gamma_A}+\dfrac{p}{\mu+d_I+\varepsilon_I+\gamma_I}\right)\dfrac{\sigma\beta S^o}{\mu+\sigma}.
\end{equation} 
 \subsection{Stability analysis of the disease-free steady state  \texorpdfstring{$\mathcal{E}^o$}{E0}} 
 In this subsection, we will deal with the issue of disease-free equilibrium stability in its local and global levels, and we will show the effect of $\mathcal{R}_0$ value on each of them, but before doing so, let us first present these needed notations and terminologies to follow this part without ambiguity. 
 \begin{itemize}
\item[$\bullet$] $\textup{I}_n$  is the identity matrix of order $n$. 
\item[$\bullet$] The real part of a complex number $z\in\mathbb{C}$ is written as $\mathfrak{Re}(z)$.
\item[$\bullet$] The set of eigenvalues of a square matrix $A$, is denoted by the symbol $\Sigma(A)$. 
\item[$\bullet$] For a square matrix with complex entries $M$, the greatest real part of eigenvalues is called the spectral bound, or the stability modulus, of $M$ (see \cite{kamgang2008computation}) and we write it as $\upalpha\left(M\right)\triangleq \max\limits_{\omega\in\Sigma(M)} \mathfrak{Re}(\omega)$.
\item[$\bullet$] Finally, we remind that $\rho(M)$ refers to the maximum modulus of a square matrix $M$ eigenvalues, that is, $\rho(M)\triangleq \max\limits_{\omega\in\Sigma(M)}\left|\omega \right|$. 
\end{itemize}
\subsubsection{Local stability of the disease-free steady state \texorpdfstring{$\mathcal{E}^o$}{E0}}
\begin{theo}\label{stabi}
Under the condition $\mathcal{R}_0<1 $, the disease-free equilibrium of the system \eqref{detr} is locally asymptotically stable, but it becomes unstable if $\mathcal{R}_0>1$.
\end{theo}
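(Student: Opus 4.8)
The plan is to settle the local stability of $\mathcal{E}^o$ by linearising \eqref{detr} at the disease-free steady state and locating the spectrum of the resulting Jacobian, taking full advantage of the block structure already exposed through the rearranged system \eqref{detr3}. First I would note that the Jacobian of $\widetilde{F}$ at $\widetilde{\mathcal{E}}^o$ is exactly $D\mathcal{F}(\widetilde{\mathcal{E}}^o)-D\mathcal{V}(\widetilde{\mathcal{E}}^o)$, which, from the block decomposition computed above, equals
\begin{equation*}
J=\begin{pmatrix} F-V & \mathbf{0}_{3,4} \\ -W_3 & -W_4 \end{pmatrix}.
\end{equation*}
Since a linear rearrangement of coordinates leaves eigenvalues unchanged, $J$ governs the linear stability of $\mathcal{E}^o$; and because $J$ is block lower triangular, its spectrum splits as $\Sigma(J)=\Sigma(F-V)\cup\Sigma(-W_4)$. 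Thus the sign of $\upalpha(J)$ is determined by the two groups of eigenvalues separately.

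Next I would dispose of the block $-W_4$, which contributes eigenvalues whose sign is independent of $\mathcal{R}_0$. As $W_4$ is itself block diagonal, its spectrum is the union of the spectra of its two $2\times 2$ diagonal blocks. The first block has trace $2\mu+q+\lambda>0$ and determinant $(\mu+q)(\mu+\lambda)-q\lambda=\mu(\mu+q+\lambda)>0$, so both of its eigenvalues have strictly positive real part; the second block is triangular with positive diagonal entries $\mu+d_H+\gamma_H$ and $\mu$. Hence every eigenvalue of $W_4$ has positive real part, so every eigenvalue of $-W_4$ has strictly negative real part. Consequently the stability modulus $\upalpha(J)$ is decided solely by $\upalpha(F-V)$, and the whole problem reduces to the $3\times 3$ infected subsystem.

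The heart of the argument, and the step I expect to be the main obstacle, is to convert the sign of $\upalpha(F-V)$ into the threshold condition on $\mathcal{R}_0=\rho(FV^{-1})$, since this is the only part that is not routine linear algebra. Here I would invoke the M-matrix machinery underlying the next-generation construction: the matrix $F$ is entrywise nonnegative and $V$ is a nonsingular M-matrix (a $Z$-matrix with positive diagonal whose inverse is nonnegative), so $FV^{-1}\geqslant 0$. The key lemma of van den Driessche and Watmough \cite{van2002reproduction}, which rests on Perron--Frobenius theory rather than on any explicit root computation, then yields the equivalences $\upalpha(F-V)<0\iff\rho(FV^{-1})<1$ and $\upalpha(F-V)>0\iff\rho(FV^{-1})>1$.

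Combining the last two paragraphs completes the proof: when $\mathcal{R}_0<1$ we get $\upalpha(F-V)<0$, hence $\upalpha(J)<0$, so $\mathcal{E}^o$ is locally asymptotically stable; when $\mathcal{R}_0>1$ the block $F-V$ produces an eigenvalue with positive real part, so $\upalpha(J)>0$ and $\mathcal{E}^o$ is unstable. I would emphasise that the genuine content lies entirely in the M-matrix equivalence of the third paragraph, whereas the reduction to $F-V$ and the positivity checks on $W_4$ are elementary and specific to the structure of \eqref{detr}.
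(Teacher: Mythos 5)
Your proposal is correct and follows essentially the same route as the paper: the block lower-triangular decomposition of $D\widetilde{F}(\widetilde{\mathcal{E}}^o)$ into $F-V$ and $-W_4$, the elementary verification that $\upalpha(-W_4)<0$ (your trace--determinant check on $\mathbf{w}_1$ is equivalent to the paper's positivity check on the characteristic polynomial coefficients), and the M-matrix equivalence $\upalpha(F-V)<0\iff\rho(FV^{-1})<1$ together with its reverse-inequality counterpart, which the paper attributes to Varga's theorem. The only cosmetic difference is that the paper explicitly cites Lyapunov's linearisation theorem to pass from the sign of the spectral bound to (in)stability, a step you leave implicit.
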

\begin{proof}
 To show the local stability of $\mathcal{E}^o=\left(S^o,Q^o,0,0,0,0,0\right)$ for system \eqref{detr}, we will go through that of $\widetilde{\mathcal{E}}^o=\big(0,0,0,S^o,Q^o,0,0\big)$ for system \eqref{detr3} because, as we have already mentioned in the previous subsection, \eqref{detr3} is just a slightly modified version of \eqref{detr} obtained by an arrangement of its lines. Obviously, the Jacobian matrix of system \eqref{detr3} at $\widetilde{\mathcal{E}}^o$ is given by the following block representation:
\begin{equation}
 D\widetilde{F}({\widetilde{\mathcal{E}}^o})=D\left[\mathcal{F}-\mathcal{V}\right]({\widetilde{\mathcal{E}}^o})=D\mathcal{F}({\widetilde{\mathcal{E}}^o})-D\mathcal{V}({\widetilde{\mathcal{E}}^o})=\left(
 \begin{tabular}{c|c}
\rule[0ex]{0pt}{0ex} $F-V$ & $\scalebox{1.2}{\textbf{0}}_{3,4}$ \\ 
\hline 
\rule[0ex]{0pt}{0ex} $-W_3$ & $-W_4$  
\end{tabular} \right).
\end{equation}  
So, and with the help of Theorem 5.2.10 of \cite{watkins2004fundamentals}, one can easily conclude that the eigenvalues of $D\widetilde{F}({\widetilde{\mathcal{E}}^o})$ are exactly the combined characteristic roots of $F-V$ and $-W_4$. In other terms,
\begin{equation}\label{deco}
 \Sigma\left(D\widetilde{F}({\widetilde{\mathcal{E}}^o})\right)=\Sigma(F-V)\cup\Sigma(-W_4), 
\end{equation}
with $\Sigma$ is referring to the spectrum (i.e., the set of eigenvalues). By the same argument, we can draw the following equality for the matrix $-W_4$: 
\begin{equation*}
\Sigma(-W_4)=\Sigma(-\text{\textbf{w}}_1)\cup\Sigma(-\text{\textbf{w}}_4),
\end{equation*} 
 which together with \eqref{deco} implies that 
 \begin{equation*}
 \Sigma\left(D\widetilde{F}({\widetilde{\mathcal{E}}^o})\right)=\Sigma(F-V)\cup\Sigma(-\text{w}_1)\cup\Sigma(-\text{w}_4). 
\end{equation*}
Then 
 \begin{equation}\label{decom}
 \upalpha\left(D\widetilde{F}({\widetilde{\mathcal{E}}^o})\right)=\upalpha(F-V)\vee\upalpha(-\text{\textbf{w}}_1)\vee\upalpha(-\text{\textbf{w}}_4), 
\end{equation}
where
\begin{equation*}
-\text{\textbf{w}}_1=\begin{pmatrix}
-\mu\hspace{-0.5pt} -\hspace{-0.5pt}q  & \lambda\\ 
q & -\mu\hspace{-0.5pt} -\hspace{-0.5pt} \lambda  \end{pmatrix}~~\text{and}~~-\text{\textbf{w}}_4=\begin{pmatrix}
-\mu\hspace{-0.5pt} -\hspace{-0.5pt} \gamma_H\hspace{-1pt} -\hspace{-1pt}d_H  & 0\\
\gamma_H & -\mu
\end{pmatrix}.
\end{equation*}
The matrix $-\text{\textbf{w}}_4$ is upper triangular, then its eigenvalues are exactly the diagonal entries, and since they are all negative in this case we get \begin{equation}\label{w4}
\upalpha(-\text{\textbf{w}}_4)<0.
\end{equation}
On the other hand, it is easy to check that the characteristic polynomial of $-\text{\textbf{w}}_1$ is 
\begin{equation*}
P(w)=\textup{det}\left(-\text{w}_1-w\textup{I}_2\right)=w^2+\underbrace{\left(q+2\mu+\lambda\right)}_{a_1}w+\underbrace{\mu(\mu+\lambda
+q)}_{a_2}.
\end{equation*} 
So, and by observing the positivity of $a_1$ and $a_2$, we can immediately conclude that
\begin{equation}\label{w3}
\upalpha(-\text{\textbf{w}}_1)<0.
\end{equation}
 According to Lemma $1$ of \cite{van2002reproduction}, $V$ is a non-singular M-matrix (a real matrix with nonpositive off-diagonal entries and positive spectral bound \cite{plemmons1977m}), and since $F$ is nonnegative (i.e., all of whose entries are nonnegative), we can deduce by using Varga's theorem \cite{salletbook,kamgang2008computation,varga1999matrix} that 
\begin{equation}\label{eqiv}
\upalpha\left(F-V\right)<0 \iff \rho(FV^{-1})<1. 
\end{equation}
Combining \eqref{decom}, \eqref{w4} and \eqref{w3} with \eqref{eqiv} gives the following equivalence:
\begin{equation}
 \mathcal{R}_0<1 \iff \upalpha\left(D\widetilde{F}({\widetilde{\mathcal{E}}^o})\right)<0,
\end{equation}
 which leads us thanks to the famous Lyapunov's linearisation theorem \cite[page 139]{khalil2002nonlinear}, to the fact that $\mathcal{R}_0<1$ implies the local asymptotic stability of the disease-free equilibrium.
\par Now, the only remaining point is to check the instability of the DFE when $\mathcal{R}_0>1$. This time, the task is somewhat easy, especially when we use the second version of Varga's theorem \cite{martcheva2015introduction,brauer2013mathematical} which asserts that 
\begin{equation}\label{eqiv2}
\upalpha\left(F-V\right)>0 \iff \rho(FV^{-1})>1. 
\end{equation}
The last equivalence together with \eqref{decom} and the Lyapunov's linearisation theorem allows us to say that the DFE is unstable if $\mathcal{R}_0>1.$ Hence, the theorem is proved.    
\end{proof}
\begin{rema}
The adopted method in the previous proof is a little different from what we usually see in the literature, since it shows the local stability of the disease-free state without using the Routh-Hurwitz criterion \textup{(}see\cite[page 101]{martcheva2015introduction}\textup{)} to the characteristic polynomial $\chi(w)=\textup{det}\left(DF(\mathcal{E}^o)-w\textup{I}_7\right)$, and this enabled us to sidestep lengthy and laborious calculations.
\end{rema}
\begin{rema}
The fact that $\mathcal{R}_0<1$ implies the local asymptotic stability of the disease-free equilibrium, is wrong in general, and to obtain it we should ensure in addition the stability condition of the matrix $-W_4$ \textup{(}i.e., $\upalpha\left(-W_4\right)<0$\textup{)}. This condition was formulated implicitly in \cite{van2002reproduction} by the assumption \textup{(A5)}, and it plays an important role in the completion of the proof concerning the DFE local asymptotic stability. But despite this, we note that there is not any reason to use it in the demonstration of instability in the case of $\mathcal{R}_0>1$.  
\end{rema}   
\subsubsection{Global stability of the disease-free steady state \texorpdfstring{$\mathcal{E}^o$}{E0}}
 In the following, we seek to proof the global asymptotic stability of $\mathcal{E}^0$ on the biologically feasible region $\mathcal{S}$ (see Theorem \ref{thinv}) by applying an approach adapted from \cite{chavez2002computation}. But before this, let us first follow the notation of \cite{brauer2013mathematical} and write system \eqref{detr3} in the form:
 \begin{equation}\label{detr4}
\begin{cases}
\dfrac{\text{d}\displaystyle{\widetilde{\mathlarger{x}}_1(t)}}{\text{d}t}=\widetilde{F}_1\left(\displaystyle{\widetilde{\mathlarger{x}}}_1(t),\displaystyle{\widetilde{\mathlarger{x}}}_2(t)\right)=\textup{\textbf{M}}\hspace{1pt}\displaystyle{\widetilde{\mathlarger{x}}_1(t)}-\widehat{f}\left(\displaystyle{\widetilde{\mathlarger{x}}_1(t)},\displaystyle{\widetilde{\mathlarger{x}}_2(t)}\right),\\[5pt]
\dfrac{\text{d}\displaystyle{\widetilde{\mathlarger{x}}_2(t)}}{\text{d}t}=\widetilde{F}_2\left(\displaystyle{\widetilde{\mathlarger{x}}}_1(t),\displaystyle{\widetilde{\mathlarger{x}}}_2(t)\right),
\end{cases}
\end{equation}    \hspace{-1pt}
where $\textup{\textbf{M}}=F-V$\textbf{,} $\displaystyle{\widetilde{\mathlarger{x}}}\hspace{-1pt}=\hspace{-1pt}\big(\widetilde{\mathlarger{x}}_1,\widetilde{\mathlarger{x}}_2\big)\hspace{-1pt}=\hspace{-1pt}\big(\hspace{-30pt}\overbrace{E,A,I,}^{\text{\footnotesize{infected components}}~\widetilde{\mathlarger{x}}_1}\hspace{-58pt},\underbrace{S,Q,H,R}_{\text{\footnotesize{uninfected components}}~\widetilde{\mathlarger{x}}_2}\hspace{-28pt}\big)$\textbf{,} $\widetilde{F}\hspace{-1pt}=\hspace{-1pt}\big(\widetilde{F}_1,\widetilde{F}_2\big)\hspace{-1pt}=\hspace{-1pt}\big(\hspace{-23pt}\overbrace{F_3,F_4,F_5,}^{\text{\footnotesize{equations of }} E,A ~\text{and}~I}\hspace{-40pt}\underbrace{F_1,F_2,F_6,F_7}_{\text{\footnotesize{equations of}}~S,Q,H~\text{and}~R}\hspace{-20pt}\big)$ and $$\widehat{f}\left(\displaystyle{\widetilde{\mathlarger{x}}_1},\displaystyle{\widetilde{\mathlarger{x}}_2}\right)=\begin{pmatrix}
\left(\beta_1S^o-\left(\beta_1-\beta_2\dfrac{I}{b+I}\right)S\right)\left(I+\theta A\right)\\
0\\
0
\end{pmatrix}.$$
\begin{lemm}\label{gener}
Let $\mathcal{M}\subset \mathbb{R}_+^7$, if the following conditions are satisfied:
\begin{enumerate}[label=$(\textup{\alph*})$]
\item The set $\mathcal{M}$ is a positively invariant region with respect to system \eqref{detr}.\label{ca}
\item For the disease-free system $\dfrac{\textup{d}\displaystyle{\widetilde{\mathlarger{x}}_2(t)}}{\textup{d}t}=\widetilde{F}_2\left(0,\displaystyle{\widetilde{\mathlarger{x}}}_2(t)\right),$ the equilibrium ${\widetilde{\mathcal{E}}^o}=\big(0,0,0,S^o,Q^o,0,0\big)$  is globally asymptotically stable.\label{cb}
\item $\textup{\textbf{M}}$ is  a stable Metzler matrix  \textup{(}i.e.,$-\textup{\textbf{M}}$ is a non-singular M-matrix \cite{kamgang2008computation}\textup{)}.\label{cc} 
\item For all $\displaystyle{\mathlarger{x}\in\mathcal{M}},$ $\widehat{f}\left(\displaystyle{\widetilde{\mathlarger{x}}_1},\displaystyle{\widetilde{\mathlarger{x}}_2}\right)\geqslant 0.$\label{cd}
\end{enumerate}
Then, the disease-free equilibrium $\mathcal{E}^o:=\left(S^o,Q^o,0,0,0,0,0\right)$ is globally asymptotically stable on $\mathcal{M}$.
\end{lemm}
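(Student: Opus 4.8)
The statement to prove is Lemma~\ref{gener}, which is precisely the Castillo-Chavez--Feng--Huang type reduction theorem for global stability of the disease-free equilibrium. The plan is to decompose the dynamics along the infected/uninfected splitting $\widetilde{\mathlarger{x}}=(\widetilde{\mathlarger{x}}_1,\widetilde{\mathlarger{x}}_2)$ already introduced in \eqref{detr4}, and to show that under hypotheses \ref{ca}--\ref{cd} every trajectory starting in $\mathcal{M}$ converges to $\mathcal{E}^o$ while being Lyapunov stable. Concretely, I would prove two things: first that the infected block $\widetilde{\mathlarger{x}}_1(t)\to 0$, and second that the uninfected block $\widetilde{\mathlarger{x}}_2(t)\to(S^o,Q^o,0,0)$; stability then follows from combining the contractive behaviour of the $\textup{\textbf{M}}$-block with the stability provided by hypothesis \ref{cb}.

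\medskip

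\textbf{The main steps, in order.} First I would treat the infected components. From \eqref{detr4} we have
\begin{equation*}
\dfrac{\textup{d}\widetilde{\mathlarger{x}}_1}{\textup{d}t}=\textup{\textbf{M}}\,\widetilde{\mathlarger{x}}_1-\widehat{f}\left(\widetilde{\mathlarger{x}}_1,\widetilde{\mathlarger{x}}_2\right),
\end{equation*}
and since hypothesis \ref{cd} guarantees $\widehat{f}\geqslant 0$ on $\mathcal{M}$ while hypothesis \ref{ca} keeps the trajectory in $\mathcal{M}$, a componentwise comparison gives the differential inequality
\begin{equation*}
\dfrac{\textup{d}\widetilde{\mathlarger{x}}_1}{\textup{d}t}\leqslant \textup{\textbf{M}}\,\widetilde{\mathlarger{x}}_1
\end{equation*}
in the sense of the nonnegative orthant (here the sign structure of $\textup{\textbf{M}}=F-V$ as a Metzler matrix is what makes the comparison propagate correctly). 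Because hypothesis \ref{cc} says $\textup{\textbf{M}}$ is a stable Metzler matrix, i.e.\ $\upalpha(\textup{\textbf{M}})<0$, the linear comparison system $\dot{z}=\textup{\textbf{M}}z$ has $z(t)\to 0$ exponentially from any nonnegative initial data. Invoking the standard Kamke/Müller comparison theorem for cooperative (Metzler) systems together with the positivity of the solution then forces $0\leqslant\widetilde{\mathlarger{x}}_1(t)\leqslant z(t)\to 0$, so $\widetilde{\mathlarger{x}}_1(t)\to 0$ as $t\to\infty$.

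\medskip

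\textbf{Passing to the uninfected block and concluding.} With $\widetilde{\mathlarger{x}}_1(t)\to 0$ established, the equation for the uninfected components becomes an asymptotically autonomous system whose limiting equation is exactly the reduced disease-free system $\tfrac{\textup{d}\widetilde{\mathlarger{x}}_2}{\textup{d}t}=\widetilde{F}_2(0,\widetilde{\mathlarger{x}}_2)$ appearing in hypothesis \ref{cb}. Since \ref{cb} asserts that $(S^o,Q^o,0,0)$ is globally asymptotically stable for that limiting system, I would apply the theory of asymptotically autonomous systems (the Markus--Thieme convergence results) to transfer this convergence to the full nonautonomous $\widetilde{\mathlarger{x}}_2$-equation, yielding $\widetilde{\mathlarger{x}}_2(t)\to(S^o,Q^o,0,0)$. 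Combining the two limits gives global attractivity of $\widetilde{\mathcal{E}}^o$, and hence of $\mathcal{E}^o$ after undoing the coordinate rearrangement. Lyapunov stability of $\mathcal{E}^o$ follows separately from the local picture: the linearization at $\widetilde{\mathcal{E}}^o$ is block lower-triangular with diagonal blocks $\textup{\textbf{M}}$ (stable by \ref{cc}) and the Jacobian of $\widetilde{F}_2(0,\cdot)$ (stable by \ref{cb}), so $\widetilde{\mathcal{E}}^o$ is locally stable, and local stability plus global attractivity is exactly global asymptotic stability on $\mathcal{M}$.

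\medskip

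\textbf{The main obstacle.} The delicate point is the transfer in the last paragraph: hypothesis \ref{cb} only concerns the \emph{reduced} system in which the infected variables are set identically to zero, whereas the true $\widetilde{\mathlarger{x}}_2$-dynamics are driven by a nonzero-but-vanishing $\widetilde{\mathlarger{x}}_1(t)$. Making the leap from ``the limiting autonomous equation is globally stable'' to ``the actual nonautonomous trajectory converges'' is precisely where a naive argument can fail, since asymptotically autonomous systems need not inherit the asymptotic behaviour of their limit equations without an additional boundedness or chain-recurrence hypothesis. Here the required boundedness is supplied by hypothesis \ref{ca} (the solution stays in the positively invariant, and by the earlier dissipativity results compact, region $\mathcal{M}$), which is exactly what legitimizes the Markus--Thieme machinery; I would therefore take care to state explicitly that trajectories remain in a compact subset of $\mathcal{M}$ before invoking the convergence theorem.
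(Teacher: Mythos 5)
Your proposal is correct and follows essentially the same route as the paper: the same infected/uninfected splitting from \eqref{detr4}, the same use of the stable Metzler structure of $\textup{\textbf{M}}$ together with hypotheses \ref{ca} and \ref{cd} to force $\widetilde{\mathlarger{x}}_1(t)\to 0$, and the same appeal to hypothesis \ref{cb} via the limiting disease-free system to conclude. The only differences are cosmetic: where you invoke the Kamke--M\"uller comparison theorem for cooperative systems, the paper obtains the bound $0\leqslant\widetilde{\mathlarger{x}}_1(t)\leqslant e^{t\textup{\textbf{M}}}\widetilde{\mathlarger{x}}_1(0)$ directly from the variation-of-constants formula and the nonnegativity of $e^{t\textup{\textbf{M}}}$ (writing $\textup{\textbf{M}}=\widehat{\textup{B}}-\kappa\,\textup{I}_7$ with $\widehat{\textup{B}}\geqslant 0$), and where you spell out the Markus--Thieme step and its boundedness caveat, the paper simply defers to the argument of Theorem 1 in the Castillo-Chavez et al.\ reference.
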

\begin{proof}
Suppose that the conditions of this lemma hold. Let $\displaystyle{\mathlarger{x}}(0)\in \mathcal{M}$ and $\left(\displaystyle{\widetilde{\mathlarger{x}}}_1(t),\displaystyle{\widetilde{\mathlarger{x}}}_2(t)\right)$ be respectively a given initial value and the solution of \eqref{detr4} that starts from it. By using the famous variation-of-constant formula \cite{ding2007approaches} for the first equation of system \eqref{detr4}, we get 
$$\displaystyle{\widetilde{\mathlarger{x}}}_1(t)=e^{t\textup{\textbf{M}}}\displaystyle{\widetilde{\mathlarger{x}}}_1(0)-\int_0^te^{(t-s)\textup{\textbf{M}}}\widehat{f}\left(\displaystyle{\widetilde{\mathlarger{x}}_1(s)},\displaystyle{\widetilde{\mathlarger{x}}_2(s)}\right)\textup{d}s.$$ 
From the third condition of this lemma, $\textup{\textbf{M}}$ is a stable Metzler matrix, then it can be expressed in the form $\textup{\textbf{M}}=\widehat{\textup{B}}-\displaystyle{\mathlarger{\kappa}}\textup{I}_7$ where $\widehat{\text{B}}\geqslant 0$ is a nonnegative matrix (i.e., all of whose entries are nonnegative)   and $\displaystyle{\mathlarger{\kappa}}>\rho(\textup{\textbf{M}})$ (see \cite{plemmons1977m}). Since $\widehat{\textup{B}}\geqslant 0$, we have $e^{\xi \widehat{\text{B}}}\geqslant 0$ for all $\xi\geqslant 0$, then $e^{\xi\textup{\textbf{M}}}\hspace{-1pt}=e^{\xi \widehat{\text{B}}}e^{-\xi \displaystyle{\mathlarger{\kappa}}\textup{I}_7}\hspace{-1pt}=e^{-\xi \displaystyle{\mathlarger{\kappa}}}e^{\xi \widehat{\text{B}}}\hspace{-1pt}\geqslant 0$. This together with the invariance of the region $\mathcal{M}$, and the fact that $\widehat{f}$ is a nonnegative function on this region (assumptions \ref{ca},\ref{cd}) implies that $$\displaystyle{\widetilde{\mathlarger{x}}}_1(t)=e^{t\textup{\textbf{M}}}\displaystyle{\widetilde{\mathlarger{x}}}_1(0)-\int_0^te^{(t-s)\textup{\textbf{M}}}\widehat{f}\left(\displaystyle{\widetilde{\mathlarger{x}}_1(s)},\displaystyle{\widetilde{\mathlarger{x}}_2(s)}\right)\textup{d}s\leqslant e^{t\textup{\textbf{M}}}\displaystyle{\widetilde{\mathlarger{x}}}_1(0).$$
The matrix $\textup{\textbf{M}}$ is stable (i.e, $\upalpha\left(\textbf{M}\right)<0$), so $\lim\limits_{t\to \infty} e^{t\textup{\textbf{M}}}\displaystyle{\widetilde{\mathlarger{x}}}_1(0)=0$. Combining this fact with the positivity of  the solution $\displaystyle{\widetilde{\mathlarger{x}}}(t)$ (Theorem \ref{thm1*}) gives $\lim\limits_{t\to \infty} \displaystyle{\widetilde{\mathlarger{x}}}_1(t)=0.$ By using the condition \ref{cb} and  an argument similar to the proof of Theorem 1 in \cite{chavez2002computation} (see also \cite[page 94]{castillo2002mathematical} for more details) we can conclude immediately that $\mathcal{E}^o:=\left(S^o,Q^o,0,0,0,0,0\right)$ is globally asymptotically  stable on $\mathcal{M}$, which proves the lemma.   
\end{proof}
\begin{rema}
Clearly, if we take $\mathcal{M}=\mathbb{R}_+^7$, the condition \ref{ca} in the last lemma becomes unnecessary because $\mathbb{R}_+^7$ is already positively invariant, and in this case, we recover exactly the statement of Theorem 9.2 of \cite{brauer2013mathematical}, which allows us to say that Lemma \ref{gener} is a generalization of this theorem. 
\end{rema}
\begin{rema}
According to the previously mentioned Varga's theorem \big{(}see \eqref{eqiv}\big{)}, the stability of the matrix $\textup{\textbf{M}}:=F-V$ \big{(}i.e., $\upalpha(F-V)<0$\big{)} is equivalent to $\mathcal{R}_0:=\rho (FV^{-1})<1$, so the last assumption of Theorem 9.2 in \cite{brauer2013mathematical} \big{(}$\mathcal{R}_0<1$\big{)} is redundant and can therefore be dropped.
\end{rema}
\begin{theo}\label{theodfe}
 If $\mathcal{R}_0<1$, then the disease-free equilibrium $\mathcal{E}^o$ is globally asymptotically stable on $\mathcal{S}$. 
\end{theo}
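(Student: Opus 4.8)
The plan is to verify that the biologically feasible region $\mathcal{S}$ fulfills the four hypotheses \ref{ca}--\ref{cd} of Lemma \ref{gener}, and then to invoke that lemma directly with $\mathcal{M}=\mathcal{S}$ to conclude. Hypothesis \ref{ca} is already in hand: the positive invariance of $\mathcal{S}$ with respect to \eqref{detr} was established in the fourth part of Theorem \ref{thinv}. Thus the real work reduces to checking \ref{cb}, \ref{cc} and \ref{cd}.

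For \ref{cc}, I would first observe that $\textup{\textbf{M}}=F-V$ has all its off-diagonal entries nonnegative (they are the nonnegative quantities $\theta\beta_1 S^o$, $\beta_1 S^o$, $(1-p)\sigma$ and $\sigma p$), so $\textup{\textbf{M}}$ is a Metzler matrix. Its stability, namely $\upalpha(\textup{\textbf{M}})=\upalpha(F-V)<0$, is then precisely the content of the Varga equivalence \eqref{eqiv} under the standing assumption $\mathcal{R}_0=\rho(FV^{-1})<1$. Together these make $-\textup{\textbf{M}}$ a non-singular M-matrix, which is exactly what \ref{cc} demands.

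For \ref{cd}, the decisive point is that we work on $\mathcal{S}$ rather than on all of $\mathbb{R}_+^7$, since $\mathcal{S}$ is defined by the bound $S\leqslant S^o$ (and $Q\leqslant Q^o$). On this region one has
$$\beta_1 S^o-\left(\beta_1-\beta_2\dfrac{I}{b+I}\right)S\geqslant \beta_1 S^o-\beta_1 S=\beta_1\left(S^o-S\right)\geqslant 0,$$
because $\beta_2\frac{I}{b+I}\geqslant 0$ only decreases the coefficient of $S$. Multiplying by the nonnegative factor $I+\theta A$ shows that the single nonzero component of $\widehat{f}$ is nonnegative, so $\widehat{f}\geqslant 0$ on $\mathcal{S}$. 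This is the structural reason the theorem is stated on $\mathcal{S}$ and not globally.

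The most substantive step is \ref{cb}, the global asymptotic stability of $\widetilde{\mathcal{E}}^o$ for the reduced disease-free dynamics $\frac{\mathrm{d}\widetilde{\mathlarger{x}}_2}{\mathrm{d}t}=\widetilde{F}_2(0,\widetilde{\mathlarger{x}}_2)$. Setting $E=A=I=0$ decouples this system into a triangular cascade: the $(S,Q)$-block becomes the affine linear system governed by the matrix $-\textup{\textbf{w}}_1$, whose stability $\upalpha(-\textup{\textbf{w}}_1)<0$ was already established in \eqref{w3}, so $(S,Q)\to(S^o,Q^o)$; next $H$ obeys $\dot{H}=-(\mu+d_H+\gamma_H)H$ and decays to $0$; and finally $R$, driven only through the vanishing $H$ by $\dot{R}=\gamma_H H-\mu R$, also tends to $0$. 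Hence $\widetilde{\mathcal{E}}^o$ is globally asymptotically stable for the reduced system. With \ref{ca}--\ref{cd} all verified, Lemma \ref{gener} yields the claim. I expect the cascade argument in \ref{cb} to be the only place requiring genuine care, since one must take the limits in the correct order (first the $(S,Q)$ block, then $H$, then $R$) rather than relying on a single spectral computation.
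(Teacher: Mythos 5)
Your proposal is correct and follows essentially the same route as the paper: both verify hypotheses \ref{ca}--\ref{cd} of Lemma \ref{gener} with $\mathcal{M}=\mathcal{S}$, using the invariance of $\mathcal{S}$ from Theorem \ref{thinv}, the Varga equivalence \eqref{eqiv} for the stability of $\textup{\textbf{M}}=F-V$, and the bound $S\leqslant S^o$ for the sign of $\widehat{f}$. The only cosmetic difference is in \ref{cb}: you unwind the triangular cascade $(S,Q)\to H\to R$ by hand, whereas the paper simply observes that the reduced disease-free system is affine linear with matrix $-W_4$ satisfying $\upalpha(-W_4)<0$ and cites the standard linear-systems theorem, so your closing caveat that a single spectral computation would not suffice is unnecessary.
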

\begin{proof}
To demonstrate this theorem, we will use Lemma \ref{gener}, in other words, we are going to show the global stability of $\mathcal{E}^o$ by proving that all the conditions \ref{ca},\ref{cb},\ref{cc} and \ref{cd} are satisfied. As supposed in the statement of the theorem, let $\mathcal{R}_0<1$. 
From the equations of \eqref{detr}, we observe that the disease-free system is expressed by
 \begin{equation}\label{dfsys}
 \dfrac{\textup{d}\displaystyle{\widetilde{\mathlarger{x}}_2(t)}}{\textup{d}t}=\widetilde{F}_2\left(0,\displaystyle{\widetilde{\mathlarger{x}}}_2(t)\right)=-W_4\hspace{2pt}\widetilde{\mathlarger{x}}_2(t)+\begin{pmatrix}
\Lambda\\
\scalebox{1.2}{\textbf{0}}_{3,1}
\end{pmatrix}.
\end{equation}
Obviously, \eqref{dfsys} is a linear differential system with $\upalpha(-W_4)<0$ (see the proof of Theorem \ref{stabi}), so the fixed point  $\widetilde{\mathcal{E}}^o$ is a globally asymptotic stable equilibrium of \eqref{dfsys} \cite[Theorem  4.5]{khalil2002nonlinear}. Thus, the condition \ref{cb} is satisfied. On the other hand, for any $\displaystyle{\mathlarger{x}\in\mathcal{S}}=\left\{\left(S,Q,E,A,I,H,R\right)\in\mathbb{R}_+^7\mid S\leqslant S^o~~\text{and}~~ Q\leqslant Q^o\right\},$ we have \vspace*{-10pt} $$\left(\beta_1S^o-\left(\beta_1-\beta_2\dfrac{I}{b+I}\right)S\right)\left(I+\theta A\right)=\Big(\beta_1\overbrace{\left(S^o-S\right)}^{\geqslant 0}+\beta_2\dfrac{I}{b+I}S\Big)\left(I+\theta A\right)\geqslant 0.$$   
Hence, $\widehat{f}\left(\displaystyle{\widetilde{\mathlarger{x}}_1},\displaystyle{\widetilde{\mathlarger{x}}_2}\right)\geqslant 0,$ for all $\displaystyle{\mathlarger{x}\in\mathcal{S}},$ which means that the condition \ref{cd} is also satisfied. Apparently, $\textup{\textbf{M}}=F-V$ is a Metzler matrix, and since $\mathcal{R}_0<1$, one can say that it is also stable \big{(}see \eqref{eqiv}\big{)}, so the assumption \ref{cc} holds. According to Theorem \ref{thinv}, the set $\mathcal{S}$ is an invariant region for \eqref{detr}. Therefore, \ref{ca} is satisfied and the conclusion of the theorem follows immediately from Lemma \ref{gener}.   
\end{proof}
\subsection{Existence of endemic equilibrium and uniform persistence}
The endemic equilibria of the proposed  COVID-19 model are obtained by solving the system  $F(x)=0$ on the positive orthant $\mathbb{R}_+^7$, where $F$ is the vector-valued function given by \eqref{F}. In other words, a point $\mathcal{E}^{\star}=\big(S^{\star},Q^{\star},E^{\star},A^{\star},I^{\star},H^{\star},R^{\star}\big)$ is an endemic equilibrium of \eqref{detr} if and only if all its components are strictly greater than zero  and satisfy the following equalities:
\begin{align*}
 S^{\star}=\dfrac{\mu+\lambda}{q}\left(c-mc^{\prime}I^{\star}\right), ~~\quad  Q^{\star}&=c-mc^{\prime}I^{\star}, ~~\quad E^{\star}=c^{\prime}I^{\star}, ~~\quad  A^{\star}=c^{\prime}m^{\prime}I^{\star}, ~~\quad  H^{\star}=\dfrac{\left(\varepsilon_I
 +\varepsilon_A c^{\prime}m^{\prime}\right)I^{\star}}{\mu+d_H+\gamma_H},\\
R^{\star}&=\dfrac{1}{\mu}\left(\gamma_Ac^{\prime}m^{\prime}+\gamma_I+\dfrac{\gamma_H\left(\varepsilon_I+\varepsilon_Ac^{\prime}m^{\prime}\right)}{\mu+d_H+\gamma_H}\right)I^{\star},  
\end{align*}
where \begin{align*}
c&=\Lambda\left((\mu+q)\times\dfrac{\mu+\lambda}{q}-\lambda\right)^{-1},&  m&=(\mu+\sigma)\left((\mu+q)\times\dfrac{\mu+\lambda}{q}-\lambda\right)^{-1},\\
c^{\prime}&=\dfrac{\mu+\varepsilon_I+\gamma_I+d_I}{\sigma p},& m^{\prime}&=\dfrac{\sigma(1-p)}{\mu+\varepsilon_A+\gamma_A+d_A},
\end{align*}
and $I^{\star}$ is defined as the positive solution of this  equation derived from the fact that $F_3(\mathcal{E}^{\star})=0$:
\begin{align}\label{equation}
\left(\beta_1-\beta_2\dfrac{I^{\star}}{b+I^{\star}}\right) S^{\star}\left(I^{\star}+\theta A^{\star}\right)=(\mu+\sigma)E^{\star}.
\end{align}
Substituting the aforementioned expressions of $S^{\star}$, $A^{\star}$ and $E^{\star}$ into \eqref{equation} leads us, and after some simplifications, to the following equation:
$$\mathfrak{A}{I^{\star}}^{2}+\mathfrak{B}I^{\star}+\mathfrak{C}=0,$$
where
$$\mathfrak{A}=mc^{\prime}(\beta_1-\beta_2)>0,~
\mathfrak{B}=\beta_1bmc^{\prime}+\beta_1c\left(\dfrac{1}{\mathcal{R}_0}-1\right)+\beta_2c,
~\text{and}~\mathfrak{C}=\dfrac{q(\sigma+\mu c^{\prime}b)}{(\mu+\lambda)(1+\theta m^{\prime}c^{\prime})}\left(1-\mathcal{R}_0\right).$$
If $\mathcal{R}_0>1$, then $\mathfrak{C}<0$, and since $\mathfrak{A}>0$, the equation \eqref{equation} admits a unique strictly positive solution. Therefore \eqref{detr} admits in turn  a unique endemic equilibrium. On the other hand, when  $\mathcal{R}_0 \leqslant 1$, we get $\mathfrak{A}>0$,$\mathfrak{B}>0$ and $\mathfrak{C}\geqslant 0$, so the equation \eqref{equation}  does not have any endemic equilibrium in this case, which implies the non-existence of an endemic equilibrium for \eqref{equation}. 
\par Hence, we can summarize the above discussions in the following theorem.
\begin{theo}
The system \eqref{detr} has a unique endemic equilibrium in the case of $\mathcal{R}_0>1$, but when $\mathcal{R}_0\leqslant 1$ such an equilibrium can never exist. 
\end{theo}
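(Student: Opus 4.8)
The plan is to reduce the existence of an endemic equilibrium $\mathcal{E}^{\star}$ to the solvability in $(0,\infty)$ of a single scalar equation in the coordinate $I^{\star}$, and then to read off existence and uniqueness from a sign analysis. First I would exploit the essentially triangular structure of the steady-state system $F(x)=0$: all of its equations except $F_1=0$ and $F_3=0$ are linear and can be solved successively to give $E^{\star}=c^{\prime}I^{\star}$ (from $F_5=0$), $A^{\star}=c^{\prime}m^{\prime}I^{\star}$ (from $F_4=0$), and then $H^{\star}$ and $R^{\star}$ (from $F_6=0$ and $F_7=0$) as explicit positive multiples of $I^{\star}$, while $F_2=0$ yields $S^{\star}=\tfrac{\mu+\lambda}{q}Q^{\star}$. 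Adding $F_1=0$ and $F_3=0$ cancels the nonlinear incidence term and leaves $(\mu+\sigma)E^{\star}=\Lambda+\lambda Q^{\star}-(\mu+q)S^{\star}$; substituting $S^{\star}=\tfrac{\mu+\lambda}{q}Q^{\star}$ and $E^{\star}=c^{\prime}I^{\star}$ then produces the affine relation $Q^{\star}=c-mc^{\prime}I^{\star}$. At this point every coordinate of a candidate equilibrium is an explicit function of the single unknown $I^{\star}$.

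Next I would substitute these expressions into the last remaining scalar condition, equation \eqref{equation} arising from $F_3(\mathcal{E}^{\star})=0$. Multiplying by $(b+I^{\star})$ to clear the media denominator and cancelling one factor $I^{\star}>0$ reduces \eqref{equation} to the quadratic $\mathfrak{A}{I^{\star}}^{2}+\mathfrak{B}I^{\star}+\mathfrak{C}=0$ with the coefficients listed above. The sign bookkeeping is then immediate from $\beta_1\geqslant\beta_2>0$ and the positivity of all demographic and epidemiological rates: one always has $\mathfrak{A}=mc^{\prime}(\beta_1-\beta_2)>0$, whereas the factor $1-\mathcal{R}_0$ governs $\mathfrak{C}$. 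When $\mathcal{R}_0>1$ we get $\mathfrak{C}<0$, so the product of the roots $\mathfrak{C}/\mathfrak{A}$ is negative and the quadratic has exactly one positive root and one negative root, hence a unique admissible $I^{\star}$. When $\mathcal{R}_0\leqslant1$ the quantity $\tfrac{1}{\mathcal{R}_0}-1\geqslant0$ makes $\mathfrak{B}>0$ while $\mathfrak{C}\geqslant0$, so all three coefficients are nonnegative and no positive root can exist, ruling out any endemic equilibrium.

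The step I expect to be the real obstacle is not counting the roots but checking that, when $\mathcal{R}_0>1$, the unique positive $I^{\star}$ delivers a genuinely positive equilibrium in \emph{every} coordinate. The components $E^{\star},A^{\star},H^{\star},R^{\star}$ are clearly positive once $I^{\star}>0$, but $Q^{\star}=c-mc^{\prime}I^{\star}$ and $S^{\star}=\tfrac{\mu+\lambda}{q}Q^{\star}$ are positive only if $I^{\star}<\tfrac{c}{mc^{\prime}}$, which positivity of the root alone does not guarantee a priori. To remove this gap I would return to the cleared form of \eqref{equation}, namely $K\,[\beta_1 b+(\beta_1-\beta_2)I^{\star}]\,(c-mc^{\prime}I^{\star})=(\mu+\sigma)c^{\prime}(b+I^{\star})$ with $K=\tfrac{\mu+\lambda}{q}(1+\theta c^{\prime}m^{\prime})>0$, and inspect the signs at the root: the right-hand side is strictly positive, and since $K>0$ and $\beta_1 b+(\beta_1-\beta_2)I^{\star}\geqslant\beta_1 b>0$, the factor $c-mc^{\prime}I^{\star}$ is forced to be strictly positive, so $Q^{\star}>0$ and $S^{\star}>0$ follow automatically. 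Equivalently, evaluating the quadratic at $I^{\star}=\tfrac{c}{mc^{\prime}}$ gives a strictly positive value while its value at $I^{\star}=0$ is $\mathfrak{C}<0$, so the intermediate value theorem localizes the unique positive root in $\bigl(0,\tfrac{c}{mc^{\prime}}\bigr)$. With all seven coordinates thus shown to be positive, the unique admissible $I^{\star}$ corresponds to exactly one endemic equilibrium; combined with the nonexistence established for $\mathcal{R}_0\leqslant1$, this completes the proof.
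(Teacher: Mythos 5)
Your proposal follows essentially the same route as the paper: solve the linear steady-state equations to express every coordinate of $\mathcal{E}^{\star}$ as an explicit function of $I^{\star}$, reduce $F_3(\mathcal{E}^{\star})=0$ to the quadratic $\mathfrak{A}{I^{\star}}^{2}+\mathfrak{B}I^{\star}+\mathfrak{C}=0$, and read off existence and uniqueness from the signs of $\mathfrak{A}$, $\mathfrak{B}$, $\mathfrak{C}$ as governed by $1-\mathcal{R}_0$. The one place you go beyond the paper is the final positivity check — verifying that the unique positive root satisfies $I^{\star}<c/(mc^{\prime})$ so that $Q^{\star}$ and $S^{\star}$ are themselves positive — a point the paper's proof passes over in silence; your argument from the cleared form of \eqref{equation}, in which the strictly positive right-hand side forces $c-mc^{\prime}I^{\star}>0$, closes that gap correctly.
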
 
After having studied the existence of the endemic equilibrium, we will now explore  the uniform persistence of system \eqref{detr}.
\begin{defi}[Uniform persistence \cite{bai2018global}]
Let $\Delta\subset \mathbb{R }_+^7$. We say that the system \eqref{detr} is uniformly persistent in $\Delta$ if there is a positive constant $\varrho>0$ such that for any initial value $\big(S_0,Q_0,E_0,A_0,I_0,H_0,R_0\big)\in \Delta$, the solution of \eqref{detr} starting from this value satisfies \begin{align*}
\liminf_{t\to\infty} S(t)&>\varrho, &\liminf_{t\to\infty} Q(t)&>\varrho, &\liminf_{t\to\infty} E(t)&>\varrho,\\
\liminf_{t\to\infty} A(t)&>\varrho, &\liminf_{t\to \infty} I(t)&>\varrho, &\liminf_{t\to\infty} H(t)&>\varrho,\\
 &  &\liminf_{t\to \infty} R(t)&>\varrho.
\end{align*}
\end{defi}
\begin{theo}\label{persist}
If $\mathcal{R}_0>1$, then the system \eqref{detr} is uniformly persistent in $\mathcal{S}$.
\end{theo}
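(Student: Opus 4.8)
The plan is to split the compartments into two groups: those driven directly by a constant input or by an already-persistent class, for which a single differential inequality suffices, and the infected block $(E,A,I)$, whose persistence is the genuine content of the theorem and which I would obtain from the instability of the disease-free equilibrium established in Theorem \ref{stabi}. Since every trajectory eventually enters the compact absorbing region (Theorem \ref{inva}), the incidence term and the quantity $I+\theta A$ are uniformly bounded, say by a constant $C>0$. Hence the $S$-equation yields
\begin{equation*}
\frac{\mathrm dS}{\mathrm dt}\geqslant \Lambda-\big(\beta_1 C+\mu+q\big)S,
\end{equation*}
so that $\liminf_{t\to\infty}S(t)\geqslant \Lambda\big/\big(\beta_1 C+\mu+q\big)=:\varrho_S>0$; feeding this into the $Q$-equation gives $\liminf_{t\to\infty}Q(t)\geqslant q\varrho_S/(\mu+\lambda)=:\varrho_Q>0$. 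The same cascading argument will later dispatch the downstream classes once the infected block is controlled.

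Thus the problem reduces to producing a uniform $\varrho_E>0$ with $\liminf_{t\to\infty}E(t)>\varrho_E$ for every orbit starting in $\mathcal{S}$. First I would set up the persistence machinery on the compact positively invariant set $X=\mathcal{S}$, decomposing it as $X=X_0\cup\partial X_0$ with
\begin{equation*}
X_0=\{x\in\mathcal{S}\mid E+A+I>0\},\qquad \partial X_0=\{x\in\mathcal{S}\mid E=A=I=0\}.
\end{equation*}
One checks that $\partial X_0$ is invariant and that on it the flow reduces to the linear $(S,Q,H,R)$-subsystem whose unique globally attracting fixed point is the disease-free equilibrium $\mathcal{E}^o$; consequently $\{\mathcal{E}^o\}$ is an isolated, acyclic invariant set in $\partial X_0$. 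By the acyclicity theorem for uniform persistence (Thieme's theorem; see \cite{bai2018global} and the references therein), it then suffices to verify that no interior orbit converges to $\mathcal{E}^o$, that is, $W^{s}(\mathcal{E}^o)\cap X_0=\emptyset$.

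The heart of the proof is this repelling property, and it is here that $\mathcal{R}_0>1$ enters. I would argue by contradiction through uniform weak persistence: if some interior orbit stayed within distance $\delta$ of $\mathcal{E}^o$ for all large $t$, then $E,A,I$ would all be smaller than $\delta$ and $S$ would be forced close to $S^o$, so that the infected block would be governed, up to an $O(\delta)$ correction, by the linear system $\dot{\mathbf y}=(F-V)\mathbf y$ with $\mathbf y=(E,A,I)^{\top}$. Since $\mathcal{R}_0>1$ gives $\upalpha(F-V)>0$ by the equivalence \eqref{eqiv2}, and since $F-V$ is an irreducible Metzler matrix (its off-diagonal couplings $\theta\beta_1 S^{o}$, $\beta_1 S^{o}$, $(1-p)\sigma$, $\sigma p$ are all positive), Perron--Frobenius furnishes a strictly positive left eigenvector $\mathbf w\gg 0$ with $\mathbf w^{\top}(F-V)=\upalpha(F-V)\,\mathbf w^{\top}$. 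For $\delta$ small enough the functional $L=\mathbf w^{\top}\mathbf y$ then satisfies $\dot L\geqslant \tfrac12\,\upalpha(F-V)\,L>0$, forcing $L$ to grow exponentially and contradicting its smallness. This establishes uniform weak persistence, and because the system is dissipative with a compact global attractor (Theorem \ref{inva}), Thieme's theorem upgrades it to uniform strong persistence, yielding a uniform lower bound $\eta>0$ for $\liminf_{t\to\infty}(E+A+I)(t)$.

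It remains only to redistribute this bound. From $\liminf(E+A+I)>\eta$ together with the irreducible cascade structure one extracts a uniform $\varrho_E>0$ with $\liminf E>\varrho_E$, and then successively $\liminf A\geqslant (1-p)\sigma\varrho_E/(\mu+\varepsilon_A+\gamma_A+d_A)$, $\liminf I\geqslant \sigma p\,\varrho_E/(\mu+\varepsilon_I+\gamma_I+d_I)$, followed by $\liminf H$ and $\liminf R$ from their respective source terms, each through the same linear differential inequality used above for $S$ and $Q$. Taking $\varrho$ to be the minimum of all these bounds completes the proof. I expect the main obstacle to be the uniform weak persistence step: one must control the nonlinear incidence uniformly near the DFE and genuinely force $S$ close to $S^o$ (using the already-established persistence of $S$) so that the Perron functional $L$ truly increases, and one must confirm the irreducibility of $F-V$ so that Perron--Frobenius delivers a strictly positive eigenvector.
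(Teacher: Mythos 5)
Your argument is correct in substance, but it is worth knowing that the paper's own proof is essentially a two-line citation: it observes that $\mathcal{E}^o$ is the only boundary invariant set, that it is unstable when $\mathcal{R}_0>1$ (Theorem \ref{stabi}), that the system is dissipative (Theorem \ref{inva}), and then invokes Theorem 4.3 of \cite{freedman1994uniform} to conclude. That cited theorem is precisely the acyclicity/repeller machinery you reconstruct by hand, so the two routes rest on the same abstract result; the difference is that you actually verify its hypotheses. This is a genuine gain: mere linear instability of $\mathcal{E}^o$ does not by itself rule out interior orbits converging to it along a stable manifold, and what the persistence theorem really requires is $W^{s}(\mathcal{E}^o)\cap X_0=\emptyset$. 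Your left Perron--Frobenius eigenvector argument (legitimate here, since $F-V$ is an irreducible Metzler matrix and $\upalpha(F-V)>0$ by \eqref{eqiv2}) supplies exactly that verification, together with the control of the nonlinear incidence near the DFE, and so your write-up is more complete than the paper's on the central point. The one step you should not leave as an assertion is the redistribution: passing from $\liminf_{t\to\infty}\bigl(E(t)+A(t)+I(t)\bigr)\geqslant\eta$ to a uniform $\varrho_E>0$ with $\liminf E>\varrho_E$ is not a one-line consequence of the cascade, because the componentwise inequalities $\liminf A\geqslant(1-p)\sigma\varrho_E/(\mu+\varepsilon_A+\gamma_A+d_A)$ etc.\ all presuppose $\varrho_E$ rather than produce it. A clean fix is to work on the interior global attractor $\mathcal{A}_0$ guaranteed by dissipativity plus uniform persistence: if $E=0$ at some point of $\mathcal{A}_0$, then running the full orbit backwards and using $\dot E\geqslant-(\mu+\sigma)E$ forces $E\equiv 0$, hence incidence $\equiv 0$, hence $A\equiv I\equiv 0$ on that orbit, contradicting $E+A+I\geqslant\eta$ on $\mathcal{A}_0$; compactness of $\mathcal{A}_0$ then yields the uniform bound, and the downstream classes follow by your linear comparison inequalities. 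Finally, note (as your decomposition $X=X_0\cup\partial X_0$ makes explicit and the paper's statement does not) that persistence can only hold for initial data with $E_0+A_0+I_0>0$, since $\partial X_0\cap\mathcal{S}$ is invariant and contains $\mathcal{E}^o$.
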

\begin{proof}
As assumed in the statement of the theorem, let $\mathcal{R}_0>1$. Plainly, the disease-free equilibrium $\mathcal{E}^o$ is on the boundary  of $\mathbb{R}_+^7$, and as already mentioned in Theorem \ref{stabi}, this  equilibrium is unstable when $\mathcal{R}_0>1$. Combining this fact with the dissipativity of system \eqref{detr} (Theorem \ref{inva}) leads directly by virtue of Theorem 4.3 in \cite{freedman1994uniform} to the uniform persistence of this system, which is the desired conclusion. 
\end{proof}
\par This result is interpreted by saying that when the basic  reproductive number $\mathcal{R}_0$ is strictly greater than one, then all the individuals appearing in \eqref{detr} and especially the infected ones, will stay above a certain positive threshold, which means that the Coronavirus disease will persist in the population in this case.

\section{Analysis of the stochastic Coronavirus model}\label{sec2}
The intent of this section is to deal with the perturbed version of the COVID-19 model expressed by the stochastic differential system \eqref{systo}. For simplicity of notation, we write from now on the initial-value problem associated with  \eqref{systo} in the following form: \begin{equation}\label{systo2}
\begin{cases}
\overbrace{\text{d}X(t)}^{7\times 1}=\overbrace{F\left(X(t)\right)\mathrm{d}t}^{7\times 1}+\overbrace{G\left(X(t)\right)}^{7\times 7}\overbrace{\text{d}B(t)}^{7\times 1},\\
X(0)=X_0\in \mathbb{R}_+^7\quad \text{almost surely}. 
\end{cases}
\end{equation}
Here, the function $F$  is the same as in \eqref{F}, with 
\begin{align}
X(t)&=\big(X_i(t)\big)_{1\leqslant i\leqslant 7}=\big(S(t),Q(t),E(t),A(t),I(t),H(t),R(t) \big),\nonumber\\[5pt] 
G\left(X(t)\right)&=\mathrm{diag}\left(\left(\sigma_iX_i(t)\right)_{1\leqslant i\leqslant 7}\right)= \begin{pmatrix}
 \sigma_1 X_1 & \cdots & 0 \\ 
 \vdots & \ddots & \vdots \\ 
 0 & \cdots & \sigma_7 X_7
\end{pmatrix},\nonumber
\end{align}
and  $$\hspace*{-4.2cm}\quad B(t)=\left(B_1(t),B_2(t),\cdots,B_7(t) \right) \textbf{.}$$
For the sake of simplicity, we will denote the temporary mean $\displaystyle{\dfrac{1}{t}\int_0^t\upvarphi(s)~\text{d}s}$ of a continuous function $\upvarphi$ by  $\langle\upvarphi(t)\rangle$ . 
\subsection{Existence and uniqueness of the global positive solution}
 To explore the dynamical properties of a population system, the first concern is to know if it admits a solution, and if this solution is unique, positive and global (in time). In what follows, we will give some conditions under which these four points above \big(existence, uniqueness, positivity and globality\big), are verified for the system \eqref{systo}. 
\begin{theo}\label{thm1}
For any initial value $X_0\in \mathbb{R}_+^7$, there is a unique solution $X(t)$ to the system \eqref{systo2} on $t\geqslant 0$, and it will remain in $\mathbb{R}_+^7$ with probability one, which means that, if $\big(S(0),Q(0),E(0),A(0),I(0),H(0), R(0)\big)$ is in $\mathbb{R}_+^7$, then $\big(S(t),Q(t),E(t),A(t),I(t),H(t), R(t)\big)\in \mathbb{R}_+^7$ for all $t\geqslant 0$ almost surely (a.s. for short).
\end{theo}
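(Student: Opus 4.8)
The plan is to follow the now-classical localization argument of Mao for stochastic population systems. Since the drift $F$ (the same vector field as in \eqref{F}) and the diagonal diffusion $G(X)=\mathrm{diag}\left(\sigma_i X_i\right)$ are locally Lipschitz continuous on $\mathbb{R}_+^7$, the standard existence–uniqueness theory for stochastic differential equations furnishes a unique local solution $X(t)$ on $[0,\tau_e)$, where $\tau_e$ is the explosion time. Everything therefore reduces to proving that $\tau_e=\infty$ almost surely, i.e. that the solution can neither reach the boundary of the orthant nor blow up in finite time.

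First I would fix an integer $m_0>0$ large enough that $X_0$ lies in the box $[1/m_0,m_0]^7$, and for each $n\geqslant m_0$ define the stopping time
$$\tau_n=\inf\Big\{t\in[0,\tau_e)\ :\ \min_{1\leqslant i\leqslant 7}X_i(t)\leqslant \tfrac1n\ \text{ or }\ \max_{1\leqslant i\leqslant 7}X_i(t)\geqslant n\Big\}.$$
The sequence $(\tau_n)_n$ is nondecreasing, so it has a limit $\tau_\infty\leqslant\tau_e$, and it suffices to establish $\tau_\infty=\infty$ a.s. Arguing by contradiction, I would assume there exist $T>0$ and $\epsilon\in(0,1)$ with $\mathbb{P}(\tau_\infty\leqslant T)>\epsilon$.

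The core of the argument is a Lyapunov estimate. I would introduce the nonnegative $C^2$ function
$$V(X)=\sum_{i=1}^{7}\big(X_i-1-\log X_i\big),$$
which is nonnegative because $u-1-\log u\geqslant 0$ for every $u>0$. Writing $\mathcal{L}$ for the generator of \eqref{systo2} and applying Itô's formula with $G(X)=\mathrm{diag}(\sigma_i X_i)$, the computation yields the constant diffusion contribution $\tfrac12\sum_{i}\sigma_i^2$ together with $\sum_i\left(1-\frac1{X_i}\right)F_i(X)$. The decisive feature is the cancellation of the bilinear incidence term: the quantity $-\big(\beta_1-\beta_2\frac{I}{b+I}\big)S(I+\theta A)$ arising from the $S$-line exactly annihilates the matching positive term from the $E$-line, the leftover factor $\big(\beta_1-\beta_2\frac{I}{b+I}\big)(I+\theta A)\leqslant\beta_1(I+\theta A)$ being only linear, while all terms of the form $-\Lambda/S$, $-\lambda Q/S$, $-\sigma p E/I$, $-qS/Q$, etc., are negative and can be discarded. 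What survives is bounded above by a constant plus a linear combination of the $X_i$ with positive coefficients; invoking the elementary inequality $u\leqslant 2\left(u-1-\log u\right)+2\log 2$ to dominate each $X_i$ by $V$, I obtain $\mathcal{L}V\leqslant K\left(1+V\right)$ for a constant $K>0$ independent of $n$.

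Finally I would integrate $\mathrm{d}V=\mathcal{L}V\,\mathrm{d}t+(\text{martingale})$ up to $\tau_n\wedge T$, take expectations to kill the martingale, and apply Gronwall's inequality to get the uniform bound $\mathbb{E}\,V\big(X(\tau_n\wedge T)\big)\leqslant\big(V(X_0)+KT\big)e^{KT}$. On the event $\{\tau_n\leqslant T\}$ at least one component of $X(\tau_n)$ equals $1/n$ or $n$, whence $V\big(X(\tau_n)\big)\geqslant\min\{\,n-1-\log n,\ \tfrac1n-1+\log n\,\}=:v_n\to\infty$. Combining $\epsilon\, v_n\leqslant\mathbb{E}\big[\mathbf{1}_{\{\tau_n\leqslant T\}}V(X(\tau_n))\big]\leqslant\big(V(X_0)+KT\big)e^{KT}$ and letting $n\to\infty$ yields a contradiction, so $\tau_\infty=\infty$ a.s. and the solution is global and $\mathbb{R}_+^7$-valued. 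I expect the generator computation to be the main obstacle: one must track the sign of every term and verify the incidence-term cancellation so that only harmless linear and constant contributions remain, since once $\mathcal{L}V\leqslant K(1+V)$ is secured the remaining steps are routine.
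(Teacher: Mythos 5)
Your proposal is correct and follows the same overall strategy as the paper: local Lipschitz coefficients give a unique maximal solution up to the explosion time, a sequence of stopping times $\tau_n$ localizes the solution in boxes $[1/n,n]^7$, and a logarithmic Lyapunov function combined with a contradiction argument forces $\tau_\infty=\infty$ a.s. The one genuine difference is in the key estimate. The paper uses the weighted function $V(x)=\bigl[x_1-a-a\ln(x_1/a)\bigr]+\sum_{i=2}^{7}\left(x_i-1-\ln x_i\right)$ and tunes the constant $a=\tfrac12\min\left\{\tfrac{\mu+d_A}{\theta\beta_1},\tfrac{\mu+d_I}{\beta_1}\right\}$ so that the linear terms $a\beta_1(I+\theta A)$ produced by the incidence are absorbed by the negative terms $-(\mu+d_A)A$ and $-(\mu+d_I)I$ coming from $\sum_i F_i$; this yields the cleaner bound $\mathcal{L}V\leqslant\mathcal{K}$ with $\mathcal{K}$ a constant, so no Gronwall step is needed before taking expectations. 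You instead keep the unweighted function $\sum_{i=1}^{7}\left(X_i-1-\log X_i\right)$, accept the surviving linear contribution $\beta_1(I+\theta A)$, dominate it by $V$ via $u\leqslant 2\left(u-1-\log u\right)+2\log 2$ (which is valid: the difference is minimized at $u=2$ where it vanishes), and then invoke Gronwall to get $\mathbb{E}\,V\bigl(X(\tau_n\wedge T)\bigr)\leqslant\bigl(V(X_0)+KT\bigr)e^{KT}$. Both routes are sound; the paper's weighting buys a constant bound on the generator at the cost of choosing $a$ carefully, while your version is slightly more mechanical and transfers verbatim to other compartmental models since it does not require the sign bookkeeping that identifies which negative terms can absorb the incidence. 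The concluding contradiction, with $v_n=\min\{n-1-\log n,\ \tfrac1n-1+\log n\}\to\infty$ on $\{\tau_n\leqslant T\}$, matches the paper's.
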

\begin{proof}
 In the system \eqref{systo2}, the coefficients $F$ and $G$ are continuously differentiables on their domains of definition, so they satisfy the local Lipschitz condition, and for this reason, there exists for any given initial value $X_0\in\mathbb{R}_+^7$, a unique maximal local solution $ X(t)$ on $t\in \left[0,\tau_{e}\right),$ where $\tau_{e}$ is the explosion time \cite{mao2007stochastic}. At this point, our goal will be to demonstrate that this solution is global, that is $\tau_{e}=\infty$ a.s. 
\par\noindent To this purpose, let $k_0\in\mathbb{N}$ be very large such that $X(0)\in [k_0^{-1},k_0]$, and define for each integer $k\geqslant k_0$ the stopping time $\tau_k$ as follows:
 \begin{align}
\tau_k &=\inf\left\lbrace t\in\left[0,\tau_e\right)\mid \left(\exists i\in\left\{1,\cdots,7\right\}\right): \: X_i(t)\not\in \left(\dfrac{1}{k},k\right) \right\rbrace=\inf\left\lbrace t\in\left[0,\tau_e\right)\mid X(t)\not\in \left(\dfrac{1}{k},k\right)^7\right\rbrace\nonumber\\ 
&=\inf\left\lbrace t\in\left[0,\tau_e\right)\mid \min_{1\leqslant i\leqslant 7} X_i(t)\leqslant \dfrac{1}{k}\: \mathrm{or} \: \max_{1\leqslant i\leqslant 7} X_i(t)\geqslant k \right\rbrace.\label{ligne}
\end{align}
Set $\tau_\infty=\lim\limits_{k\to \infty} \tau_k$, clearly, $\left(\tau_k\right)_{k \geqslant k_0}$ is increasing; hence, $\lim\limits_{k\to \infty} \tau_k=\sup\limits_{k\geqslant k_0} \tau_k$, and according to Lemma 2.11 of \cite{karatzas1998brownian} $\sup\limits_{k\geqslant k_0} \tau_k$ is a stopping time, then so is $\tau_\infty$.\\
By adopting the convention $\inf \emptyset=\infty$ for the rest of this paper, we can easily affirm that $\tau_\infty \leqslant\tau_e$ a.s., and this because
\begin{itemize}
 \item[-] if for all $k\geqslant k_0$ $$\left\lbrace t\in\left[0,\tau_e\right)\mid X(t)\not\in \left(\frac{1}{k},k\right)^7\right\rbrace\neq \emptyset, $$ then obviously $\tau_k\leqslant \tau_e$ for each $k\geqslant k_0$, which implies that $\lim\limits_{k\to \infty} \tau_k=\tau_\infty \leqslant\tau_e.$
 \item[-] On the other hand, if there is an integer $k_1\geqslant k_0$ such that 
\begin{equation}\label{m}
\left\lbrace t\in\left[0,\tau_e\right)\mid X(t)\not\in \left(\frac{1}{k_1},k_1\right)^7\right\rbrace=\emptyset,
\end{equation} 
then $\tau_{k_1}=\infty$, and as $\left(\tau_k\right)_{k \geqslant k_0}$ is increasing, we get $\tau_k=\infty$ for all $k\geqslant k_1$, which yields $~\tau_\infty=\infty$.
At the same time, it follows from \eqref{m} that the solution $X(t)$ is bounded and belongs to $\left(\frac{1}{k_1},k_1\right)^7$ for every $t\in\left[0,\tau_e\right)$, namely
\begin{equation}\label{bor}
X(t)\in \left(\frac{1}{k_1},k_1\right)^7 \text{a.s.},\quad \text{for all} ~t \in\left[0,\tau_e\right).
\end{equation}
Therefore, $\tau_e=\infty$ and so $\tau_\infty\leqslant\tau_e.$
\end{itemize}
Hence, $\tau_e=\infty$ a.s. will follow directly if we show that $\tau_\infty=\infty$ a.s., and that is exactly what we are going to do to finish the proof.
\par Assume that $\tau_\infty=\infty$ a.s. is untrue, then there exists a positive constant $T$ such that $\mathbb{P}\left(\tau_\infty\leqslant T\right)>0$.\\
Therefore, there exists an 
$\epsilon>0$ for which
\begin{equation}\label{eq1}
\mathbb{P}\left(\tau_k\leqslant T\right)> \epsilon~~\text{for all}~k\geqslant k_0.
\end{equation}
 Consider the $\mathcal{C}^2$function $V$ defined for $x=\left(x_1,\cdots,x_7\right)\in \mathbb{R}_+^7$ by
 $$V\left(x\right)=\left[x_1-a-a\ln\left(\dfrac{x_1}{a}\right) \right]+\displaystyle{\sum\limits_{i=2}^7 \left(x_i-1-\ln\left(x_i\right)\right)},$$   
where $a$ is a positive constant to be chosen suitably later. The nonnegativity of this function  can be deduced from the following inequality: $ x-1-\ln(x)>0,~\forall x>0$.\\
Applying the multi-dimensional It\^{o}'s formula \big(see \cite[page 36]{mao2007stochastic}\big) to $V(X(t))$, we obtain for all $k \geqslant k_0$ and $t\in\left[0,\tau_k \right)$
\begin{align*}
\text{d}V\big(S(t),Q(t),E(t),A(t),I(t),H(t),R(t)\big)&=\mathcal{L}V\big(S(t),Q(t),E(t),A(t),I(t),H(t),R(t)\big)~\text{d}t+\left(S(t)-a\right)\sigma_1~\mathrm{d}B_1(t)\\
&\quad+\left(Q(t)-1\right)\sigma_2~\mathrm{d}B_2(t)+\left(E(t)-1\right)\sigma_3~\mathrm{d}B_3(t)
+\left(A(t)-1\right)\sigma_4~\mathrm{d}B_4(t)\\
&\quad+\left(I(t)-1\right)\sigma_5~\mathrm{d}B_5(t)
+\left(H(t)-1\right)\sigma_6~\mathrm{d}B_6(t)
+\left(R(t)-1\right)\sigma_7~\mathrm{d}B_7(t),
\end{align*} 
where $\mathcal{L}V:\mathbb{R}_{+}^7\to\mathbb{R}$ is defined by
\begin{align*}
\mathcal{L}V\big(S,Q,E,A,I,H,R\big)&=\left(1-\dfrac{a}{S} \right)\times\left[\Lambda-\left(\beta_1-\beta_2\dfrac{I}{b+I}\right)S\left(I+\theta A\right)+\lambda Q -(\mu+q)S \right]\\
&\quad +\left(1-\dfrac{1}{Q}\right)\times\left[qS-\left(\mu+\lambda\right)Q \right]+\left(1-\dfrac{1}{E}\right)\times \left[\left(\beta_1-\beta_2\dfrac{I}{b+I}\right)S\left(I+\theta A\right)-(\mu+\sigma)E\right]\\
&\quad+\left(1-\dfrac{1}{A}\right)\times\left[\left(1-p\right)\sigma E-\left(\mu+\varepsilon_A+\gamma_A+d_A\right)A\right]\hspace{-0.5pt}+\hspace{-0.5pt}\left(1\hspace{-0.3pt}-\hspace{-0.3pt}\dfrac{1}{I}\right)\hspace{-0.5pt}\times \hspace{-0.5pt}\left[\sigma p E\hspace{-1pt}-\hspace{-1pt}\left(\mu\hspace{-1pt}+\hspace{-1pt}\varepsilon_I\hspace{-1pt}+\hspace{-1pt}\gamma_I\hspace{-1pt}+\hspace{-1pt}d_I\right)I\right]\\
&\quad+\left(1-\dfrac{1}{H}\right)\times\left[\varepsilon_I I+\varepsilon_A A-\left(\mu+d_H+\gamma_H\right)H\right]+\left(1-\dfrac{1}{R}\right)\times\left[\gamma_H H+\gamma_I I+\gamma_A A-\mu R\right]\\
&\quad+\frac{1}{2}\left[a\sigma_1^2+\sigma_2^2+\sigma_3^2+\sigma_4^2+\sigma_5^2+\sigma_6^2+\sigma_7^2\right]\\
&=\Lambda-\mu\left(S+Q+E+A+I+H+R\right)-d_A A-d_I I-d_H H\\
&\quad+\left[-\dfrac{a \Lambda}{S}+a\beta_1\left(I+\theta A\right)-a\beta_2\dfrac{I\left(I+\theta A\right)}{b+I}-a\dfrac{\lambda Q}{S}+a(\mu+q) \right]+\left[-q\frac{S}{Q}+\left(\mu+\lambda\right)\right]\\
&\quad+\left[-\left(\beta_1-\beta_2\dfrac{I}{b+I}\right)\dfrac{S}{E}\left(I+\theta A\right)+(\mu+\sigma)\right]+\left[-\left(1-p\right)\sigma \frac{E}{A}+\left(\mu+\varepsilon_A+\gamma_A+d_A\right)\right]\\
&\quad+\left[-\sigma p \frac{E}{I}+\left(\mu+\varepsilon_I+\gamma_I+d_I\right)\right]+\left[-\varepsilon_I\frac{I}{H}-\varepsilon_A\frac{A}{H}+(\mu+\gamma_H+d_H)\right]\\
&\quad+\left[-\gamma_H\frac{H}{R}-\gamma_I\frac{I}{R}-\gamma_A\frac{A}{R}+\mu\right]+\frac{1}{2}\left[a\sigma_1^2+\sigma_2^2+\sigma_3^2+\sigma_4^2+\sigma_5^2+\sigma_6^2+\sigma_7^2\right]\\
&\leqslant \Big[\Lambda+6\mu+\lambda+\sigma+\varepsilon_A+\gamma_A+d_A+\varepsilon_I+\gamma_I+d_I+d_H+\gamma_H+a(\mu+q)\\
&\quad+\frac{1}{2}\left(a\sigma_1^2+\sigma_2^2+\sigma_3^2+\sigma_4^2+\sigma_5^2+\sigma_6^2+\sigma_7^2 \right)\Big]-\mu(S+Q+E+R)-\theta\beta_1\left(\dfrac{\mu+d_A}{\theta\beta_1}-a\right)A\\
&\quad-\beta_1\left(\dfrac{\mu+d_I}{\beta_1}-a\right)I.
\end{align*}
By choosing $a=\dfrac{1}{2}\min\left\{\dfrac{\mu+d_A}{\theta \beta_1},\dfrac{\mu+d_I}{\beta_1}\right\}$, the coefficients of $A$ and $I$ will be negatives, therefore
$$\mathcal{L}V\left(S,Q,E,A,I,H,R\right)\leqslant\Lambda+6\mu+\lambda+\sigma+\varepsilon_A+\gamma_A+d_A+\varepsilon_I\hspace{-1pt}+\gamma_I+d_I+d_H+\gamma_H+a(q+\mu)\hspace{-1pt}+\hspace{-1pt}\dfrac{1}{2}\left(a\sigma_1^2+\sum\limits_{i=2}^{7}\sigma_i^2\right)\hspace{-1pt}\triangleq \hspace{-1pt}\mathcal{K}.$$
Hence, we get for all $k \geqslant k_0$ and $t\in\left[0,\tau_k \right)$
\begin{align*}
\text{d}V\big(S(t),Q(t),E(t),A(t),I(t),H(t),R(t)\big)&\leqslant \mathcal{K}~\text{d}t\hspace{-1pt}+\hspace{-1pt}\left(S(t)-a\right)\sigma_1~\mathrm{d}B_1(t)\hspace{-1pt}+\hspace{-1pt}\left(Q(t)-1\right)\sigma_2~\mathrm{d}B_2(t)
\hspace{-1pt}+\hspace{-1pt}\left(E(t)-1\right)\sigma_3~\mathrm{d}B_3(t)\\
&\quad+\left(A(t)-1\right)\sigma_4~\mathrm{d}B_4(t)
\hspace{-1pt}+\hspace{-1pt}\left(I(t)-1\right)\sigma_5~\mathrm{d}B_5(t)
\hspace{-1pt}+\hspace{-1pt}\left(H(t)-1\right)\sigma_6~\mathrm{d}B_6(t)\\
&\quad+\left(R(t)-1\right)\sigma_7~\mathrm{d}B_7(t).
\end{align*}
 Integrating from $0$ to $\tau_k\wedge T$ and then taking the expectation on both sides of the above inequality leads to
 \begin{equation}\label{eq2}
 \mathbb{E}\left[V(X(T\wedge\tau_k)\right]\leqslant V(X(0))+\mathcal{K}\mathbb{E}\left[\tau_k\wedge T\right]\leqslant  V(X(0))+\mathcal{K}T.
 \end{equation}
 We have $V(x)\geq 0$ for all $x>0$, then
\begin{equation}\label{eq3}
\mathbb{E}\left[V(X(T\wedge\tau_k)\right]=\mathbb{E}\left[V(X(T\wedge\tau_k)\times \mathds{1}_{\lbrace\tau_k \leqslant T \rbrace}\right]+\mathbb{E}\left[V(X(t\wedge\tau_k)\times \mathds{1}_{\lbrace \tau_k>T \rbrace}\right]\geqslant \mathbb{E}\left[V(X(\tau_k)\times \mathds{1}_{\lbrace\tau_k \leqslant T \rbrace}\right],
\end{equation}
where $\mathds{1}_A$ denotes the indicator function of a measurable set  $A\in\mathfrak{F}$. Note that for every $\omega\in\left\lbrace\omega\in\Omega\mid \tau_k(\omega)\leqslant T \right\rbrace$, there is some component of $V\left(X(\tau_k)\right)$ equals to $k$ or $\dfrac{1}{k}$ so
$$V\left(X(\tau_k)\right)\geqslant\left(k-a-a\ln\left(\frac{k}{a}\right)\right)\wedge\left(\frac{1}{k}-a-a\ln\left(\frac{1}{ka}\right)\right)\wedge\left(k-1-\ln\left(k\right)\right)\wedge\left(\frac{1}{k}-1-\ln\left(\frac{1}{k}\right)\right).$$
Therefore
\begin{align}\label{eq4} 
\mathbb{E}\left[V(X(\tau_k)\times \mathds{1}_{\lbrace\tau_k \leqslant T \rbrace}\right]\geqslant&\hspace{2pt} \mathbb{P}\left(\tau_k\leqslant T\right)\left(k-a-a\ln\left(\frac{k}{a}\right)\right)\wedge\left(\frac{1}{k}-a-a\ln\left(\frac{1}{ka}\right)\right)\wedge\left(k-1-\ln\left(k\right)\right)\nonumber\\
&\wedge\left(\frac{1}{k}-1-\ln\left(\frac{1}{k}\right)\right).
\end{align}
Combining \eqref{eq2}, \eqref{eq3} and \eqref{eq4} with \eqref{eq1}, we conclude that
$$V\left(X(0)\right)+\mathcal{K}T\geqslant \mathlarger\varepsilon\left(k-a-a\ln\left(\frac{k}{a}\right)\right)\wedge\left(\frac{1}{k}-a-a\ln\left(\frac{1}{ka}\right)\right)\wedge\left(k-1-\ln\left(k\right)\right)\wedge\left(\frac{1}{k}-1-\ln\left(\frac{1}{k}\right)\right).$$
Letting $k\to \infty$  leads to the contradiction $V\left(X(0)\right)+\mathcal{K}T=\infty$, which completes the proof.   
\end{proof}
\subsection{Stochastically ultimate boundedness and permanence}
After having demonstrated the positivity and the globality of our system's solution, it is now time to discuss in more detail how it behaves in the positive cone $\mathbb{R}^7_+$. In the following, we are going to define the notions of stochastically ultimate boundedness and permanence, and we will subsequently show that the solution of system  \eqref{systo} verifies these properties.    
\begin{defi}[Stochastically ultimate boundedness \cite{li2009population}]\label{sub}
We say that the system \eqref{systo} is stochastically ultimately bounded, or ultimately bounded in probability (see \cite[page 395]{mao2007stochastic}), if to every $\varepsilon>0$, there corresponds a $\delta_\varepsilon>0$ such that $$\limsup\limits_{t\to\infty}\mathbb{P}\left(\left\|X\left(t,X_0\right)\right\|>\delta_\varepsilon\right)\leqslant \varepsilon~~\text{for every}~X_0\in\mathbb{R}_+^7,$$
where $X\left(t,X_0\right)$ here denotes  the solution of \eqref{systo2} which satisfies the initial condition $X(0)=X_0$.
\end{defi}
\begin{rema}
Other authors, as in \cite{mao2006stochastic,shen2005stochastic}, and \cite{bahar2004stochastic} adopt a different definition  for this last notion (Stochastically ultimate boundedness), according to which the system \eqref{systo2} is stochastically bounded in probability, if for any $\varepsilon\in\left(0,1\right)$ there is some $\delta_\varepsilon>0$ for which the following inequality: 
\begin{equation}\label{ine}
\limsup\limits_{t\to\infty}\mathbb{P}\left(\left\|X\left(t,X_0\right)\right\|\leqslant\delta_\varepsilon\right)\geqslant 1-\varepsilon
\end{equation}
is satisfied for any initial value   $X_0\in\mathbb{R}_+^7$.We mention that the inequality \eqref{ine} can equivalently be  rewritten as: 
\begin{equation*}
\liminf\limits_{t\to\infty}\mathbb{P}\left(\left\|X\left(t,X_0\right)\right\|>\delta_\varepsilon\right)\leqslant \varepsilon,
\end{equation*}
and this allows us to say that if our system is stochastically ultimate bounded in the sense of Definition \ref{sub}, then it will also be so in the sense of \eqref{ine}.
\end{rema}
\begin{defi}[Stochastic persistence \cite{liu2017permanence}]
Let $X\left(t,X_0\right)$ be the solution of \eqref{systo} that verifies $X(0)=X_0$, we say that the system \eqref{systo} is stochastically persistent, if for each $\mathlarger\varepsilon>0$, there exists a positive constant $\eta_\varepsilon>0$ such that the following property is satisfied:
$$\liminf\limits_{t\to\infty}\mathbb{P}\left(\left\|X\left(t,X_0\right)\right\|\geqslant\eta_\varepsilon\right)\geqslant 1-\varepsilon,~~\text{for all}~X_0\in\mathbb{R}_+^7.$$
\end{defi}
\begin{defi}[Stochastic permanence \cite{cai2017stochastic}]
\label{perm}
The system \eqref{systo2} is called stochastically permanent, if it is both stochastically ultimate bounded and persistent.
\end{defi}
\begin{theo}\label{bounded}
The system \eqref{systo} is stochastically ultimate bounded, and even stochastically permanent.
\end{theo}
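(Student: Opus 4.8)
The plan is to establish the two constituent properties of Definition~\ref{perm} separately: first the stochastic ultimate boundedness of \eqref{systo}, and then its stochastic persistence. Each of these will be derived from a one-sided moment estimate on the total population $N(t)=S(t)+Q(t)+E(t)+A(t)+I(t)+H(t)+R(t)$, combined with the Markov inequality and the elementary norm equivalence $\tfrac{1}{\sqrt{7}}\,N(t)\leqslant \|X(t)\|\leqslant N(t)$, which holds on $\mathbb{R}_+^7$ because every component is nonnegative. Throughout I write $\hat\sigma^2=\max_{1\leqslant i\leqslant 7}\sigma_i^2$ and use the same nonnegativity to control the diffusion, namely $\sum_{i=1}^7\sigma_i^2 X_i^2\leqslant \hat\sigma^2 N^2$. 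Summing the seven equations of \eqref{systo} gives $\mathrm{d}N=[\Lambda-\mu N-d_AA-d_II-d_HH]\,\mathrm{d}t+\sum_{i=1}^7\sigma_iX_i\,\mathrm{d}B_i(t)$, which is the object I will feed into It\^o's formula in both parts.

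For the ultimate boundedness I would work with the Lyapunov function $V_1(X)=N^p$ for a suitably small exponent $p>0$ (any $p\in(0,1]$ works without restriction, while for $p>1$ one only needs $\mu>\tfrac12(p-1)\hat\sigma^2$, which can always be arranged by shrinking $p$). Applying It\^o's formula and discarding the favourable mortality terms yields $\mathcal{L}(N^p)\leqslant p\Lambda N^{p-1}-p\big[\mu-\tfrac12(p-1)\hat\sigma^2\big]N^p$. Since the term $p\Lambda N^{p-1}$ is dominated by the negative leading power, this estimate can be recast as $\mathcal{L}(N^p)\leqslant K-\theta N^p$ with constants $K,\theta>0$ independent of $X_0$. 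Integrating up to $t\wedge\tau_k$ (with $\tau_k$ the stopping times of Theorem~\ref{thm1}), taking expectations so that the stopped stochastic integral vanishes, and passing to the limit by Fatou's lemma, a Gronwall argument gives $\limsup_{t\to\infty}\mathbb{E}[N^p(t)]\leqslant K/\theta=:C_p$. Hence $\limsup_{t\to\infty}\mathbb{E}\|X(t)\|^p\leqslant C_p$, and the Markov inequality $\mathbb{P}(\|X\|>\delta)\leqslant \delta^{-p}\,\mathbb{E}\|X\|^p$ supplies, for any $\varepsilon>0$, the threshold $\delta_\varepsilon=(C_p/\varepsilon)^{1/p}$ demanded by Definition~\ref{sub}.

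For the persistence I would rerun the scheme with the negative-power Lyapunov function $V_2(X)=N^{-q}$ for some $q>0$. A second application of It\^o's formula gives $\mathcal{L}(N^{-q})\leqslant -q\Lambda N^{-q-1}+q\big[\mu+\hat d+\tfrac12(q+1)\hat\sigma^2\big]N^{-q}$, where $\hat d=\max\{d_A,d_I,d_H\}$ bounds the disease-induced death contribution through $d_AA+d_II+d_HH\leqslant \hat d\,N$. The crucial feature is that the recruitment term produces $-q\Lambda N^{-q-1}$, which dominates as $N\to 0$ and thus acts as a restoring force away from the origin; consequently $\mathcal{L}(N^{-q})\leqslant K'-\theta' N^{-q}$ for suitable $K',\theta'>0$. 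The identical localisation-plus-Gronwall reasoning then yields $\limsup_{t\to\infty}\mathbb{E}[N^{-q}(t)]\leqslant \tilde{C}$, whence $\limsup_{t\to\infty}\mathbb{E}\|X(t)\|^{-q}\leqslant 7^{q/2}\tilde{C}$. Applying the Markov inequality in the reversed form $\mathbb{P}(\|X\|<\eta)=\mathbb{P}(\|X\|^{-q}>\eta^{-q})\leqslant \eta^{q}\,\mathbb{E}\|X\|^{-q}$ and choosing $\eta_\varepsilon$ small enough gives $\liminf_{t\to\infty}\mathbb{P}(\|X\|\geqslant\eta_\varepsilon)\geqslant 1-\varepsilon$, which is exactly stochastic persistence. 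Invoking Definition~\ref{perm} then combines the two parts into stochastic permanence.

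The main obstacle is the persistence estimate rather than the boundedness one. Because $N^{-q}$ blows up on the boundary of $\mathbb{R}_+^7$, It\^o's formula can only be applied up to the stopping times $\tau_k$ of Theorem~\ref{thm1}, with the moment bound recovered afterwards by Fatou's lemma; moreover one must verify quantitatively that the single gain term $-q\Lambda N^{-q-1}$ genuinely dominates the positive $O(N^{-q})$ contributions arising from the natural and disease-induced mortalities and from the It\^o correction, so that a strictly positive $\theta'$ can indeed be extracted. By contrast, the positive-moment step is comparatively routine, the only mild care being the admissible choice of $p$ when the noise intensity $\hat\sigma$ is large.
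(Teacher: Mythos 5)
Your proposal is correct and follows essentially the same route as the paper: the paper applies It\^o's formula to the single Lyapunov function $\widetilde V(N)=N+1/N$ of the total population, obtains $\mathcal{L}\widetilde V\leqslant-\mu\widetilde V+\mathcal{C}$, integrates $e^{\mu t}\widetilde V(N(t))$ up to the stopping times $\tau_k$, and extracts both the upper and the lower bound on $\|X\|$ from one Markov inequality, whereas you split this into the two functions $N^{p}$ and $N^{-q}$ treated separately --- a purely cosmetic difference, and your negative-power estimate is sound. One small caveat: for $p\in(0,1)$ the recast $\mathcal{L}(N^{p})\leqslant K-\theta N^{p}$ fails as $N\to0^{+}$, since $p\Lambda N^{p-1}$ blows up there while the right-hand side stays bounded; so in the boundedness step you should take $p=1$ (which your stated range allows and which requires no condition on $\hat\sigma$), or first bound the first moment and deduce the lower moments by Jensen's inequality.
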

\begin{proof}
Let $X_0\in\mathbb{R}_+^7$, by summing up the seven equations in \eqref{systo2} and denoting $N=S+Q+E+A+I+H+R$, we obtain for all $t\geqslant0$ 
\begin{align*}
\text{d}N(t)&=\left(\Lambda-\mu N(t)-d_A A(t)-d_I I(t)-d_H H(t)\right)~\text{d}t+\sigma_1S(t)~\text{d}B_1(t)+\sigma_2Q(t)~\text{d}B_2(t)+\sigma_3E(t)~\text{d}B_3(t)\\
&\quad+\sigma_4A(t)~\text{d}B_4(t)+\sigma_5I(t)~\text{d}B_5(t)+\sigma_6H(t)~\text{d}B_6(t)+\sigma_7R(t)~\text{d}B_7(t).
\end{align*}
Define a $\mathcal{C}^2$ function $\widetilde{V}:\mathbb{R}_+\to [2,\infty)$ by $\widetilde{V}(x)=x+\dfrac{1}{x}$, the It\^{o}'s formula shows that
\begin{align*}
\text{d}\widetilde{V}(N(t))&=\mathcal{L}\widetilde{V}(N(t))\,\text{d}t+\left(1-\dfrac{1}{N(t)^2}\right)\big[\sigma_1S(t)~\text{d}B_1(t)+\sigma_2Q(t)~\text{d}B_2(t)+\sigma_3E(t)~\text{d}B_3(t)+\sigma_4A(t)~\text{d}B_4(t)\\
&\quad+\sigma_5I(t)~\text{d}B_5(t)+\sigma_6H(t)~\text{d}B_6(t)+\sigma_7R(t)~\text{d}B_7(t)\big],
\end{align*}
where $\mathcal{L}\widetilde{V}(N)$ is given by
\begin{align*}
\mathcal{L}\widetilde{V}(N)&=\big(\Lambda-\mu N-d_A A-d_I I-d_H H\big)-\dfrac{\Lambda-\mu N-d_A A-d_I I-d_H H}{N^2}+\dfrac{1}{N^3}\left(\sigma_1^2 S^2+\sigma_2^2 Q^2+\sigma_3^2 E^2\right.\\
&\quad\left.+\sigma_4^2 A^2+\sigma_5^2 I^2+\sigma_6^2 H^2+\sigma_7^2 R^2\right)\\
&\leqslant \big(\Lambda-\mu N\big)-\dfrac{\Lambda}{N^2}+\dfrac{\mu+d_A+d_I+d_H}{N}+\dfrac{1}{N}\sum\limits_{i=1}^7 \sigma_i^2\\
&\leqslant -\mu\left(N+\frac{1}{N}\right)+\Lambda+\dfrac{1}{N}\left(2\mu+d_A+d_I+d_H+\sum\limits_{i=1}^7 \sigma_i^2\right)-\dfrac{\Lambda}{N^2}\\
&\leqslant -\mu \widetilde{V}(N)\hspace{-2pt}-\hspace{-2pt}\left(\dfrac{\sqrt{\Lambda}}{N}\hspace{-2pt}-\hspace{-2pt}\dfrac{1}{2\sqrt{\Lambda}}\hspace{-2pt}\left(2\mu+d_A+d_I+d_H+\hspace{-2pt}\sum\limits_{i=1}^7 \sigma_i^2\right)\right)^2\hspace{-6pt}+\underbrace{\dfrac{1}{4\Lambda}\hspace{-2pt}\left(\hspace{-0.2pt}2\mu\hspace{-1pt}+\hspace{-1pt}d_A\hspace{-1pt}+\hspace{-1pt}d_I+\hspace{-1pt}d_H\hspace{-1pt}+\hspace{-2pt}\sum\limits_{i=1}^7 \sigma_i^2\hspace{-0.2pt}\right)^2\hspace{-6pt}+\hspace{-1pt}\Lambda}_{:=\mathcal{C}}\\[-10pt]
&\leqslant -\mu \widetilde{V}\left(N\right)+\mathcal{C}. 
\end{align*}
Applying the integration by parts formula (see \cite[page 37]{mao2007stochastic}) to $e^{\mu t}\widetilde{V}\left(N(t)\right)$ gives
\begin{align*}
\text{d}e^{\mu t}\widetilde{V}\left(N(t)\right)&=\mu e^{\mu t}\widetilde{V}\hspace{-0.5pt}\left(N(t)\right)\:\text{d}t+e^{\mu t}\,\text{d}\widetilde{V}\left(N(t)\right)\\
&\leqslant \mu e^{\mu t}\widetilde{V}\hspace{-0.5pt}\left(N(t)\right)\text{d}t\hspace{-1.5pt}+\hspace{-1.5pt}e^{\mu t}\hspace{-2pt}\Big[\left(\hspace{-1pt}-\mu\widetilde{V}\left(N(t)\right)\hspace{-1.5pt}+\hspace{-1.5pt}\mathcal{C}\right)\text{d}t\hspace{-1.5pt}+\hspace{-1.5pt}\left(\hspace{-2pt}1\hspace{-1pt}-\hspace{-1pt}\dfrac{1}{N(t)^2}\hspace{-2pt}\right)\left[\sigma_1S(t)~\text{d}B_1(t)+\sigma_2Q(t)~\text{d}B_2(t)\right.\\
&\quad \left.+\sigma_3E(t)~\text{d}B_3(t)+\sigma_4A(t)~\text{d}B_4(t)+\sigma_5I(t)~\text{d}B_5(t)+\sigma_6H(t)~\text{d}B_6(t)+\sigma_7R(t)~\text{d}B_7(t)\right]\Big]\\
&\leqslant \mathcal{C} e^{\mu t} \,\text{d}t+e^{\mu t}\left(\hspace{-2pt}1\hspace{-1pt}-\hspace{-1pt}\dfrac{1}{N(t)^2}\hspace{-2pt}\right)\left[\sigma_1S(t)~\text{d}B_1(t)+\sigma_2Q(t)~\text{d}B_2(t)+\sigma_3E(t)~\text{d}B_3(t)+\sigma_4A(t)~\text{d}B_4(t)\right.\\
&\quad \left.+\sigma_5I(t)~\text{d}B_5(t)+\sigma_6H(t)~\text{d}B_6(t)+\sigma_7R(t)~\text{d}B_7(t)\right].
\end{align*}
By integrating  from $0$ to $t\wedge\tau_k$ ($\tau_k$ is already given  in \eqref{ligne}), and then taking the expectation on both sides of this inequality, we get for all $t\geqslant 0$ and $k\geqslant k_0$
\begin{align}\label{ligne2}
\mathbb{E}\left[e^{\mu(t\wedge\tau_k)}\widetilde{V}\left(N(t\wedge \tau_k)\right)\right]\leqslant \widetilde{V}\left(N(0)\right)+\mathbb{E}\left[\int_0^{t\wedge\tau_k}\hspace{-15pt}\mathcal{C} \times e^{\mu s}\:\text{d}s\right]\leqslant\widetilde{V}\left(N(0)\right)+\dfrac{\mathcal{C}}{\mu}\left(e^{\mu t}-1\right).
\end{align}  
According to Theorem \ref{thm1}, $\tau_k\to\infty$ almost surely as $k\to\infty$, so extending $k$ to $\infty$ in \eqref{ligne2} leads to
\begin{align*}
\mathbb{E}\left[\widetilde{V}\left(N(t)\right)\right]\leqslant \widetilde{V}\left(N(0)\right)\times e^{-\mu t}+\dfrac{\mathcal{C}}{\mu}\left(1-e^{-\mu t}\right)\leqslant \widetilde{V}\left(N(0)\right)\times e^{-\mu t}+\dfrac{\mathcal{C}}{\mu},
\end{align*}
Let $\varepsilon>0$, and take $\delta_\varepsilon =\frac{\mathcal{C}}{\varepsilon\mu}$, by making use of the well-known Markov's inequality  (see for example \cite{cohen2015markov} and the related bibliography), we get 
\begin{equation*}
\mathbb{P}\left(\widetilde{V}\left(N(t)\right)>\delta_\varepsilon\right)\leqslant \dfrac{1}{\delta_\varepsilon}\times\mathbb{E}\big[\widetilde{V}\left(N(t)\right)\big]\leqslant \dfrac{1}{\delta_\varepsilon} \left(\widetilde{V}\left(N(0)\right)\times e^{-\mu t}\right)+\varepsilon.
\end{equation*}
Then
\begin{equation*}
\mathbb{P}\left(N(t)+\dfrac{1}{N(t)}\leqslant \delta_{\varepsilon}\right)\geqslant 1-\varepsilon-\dfrac{1}{\delta_\varepsilon} \left(\widetilde{V}\left(N(0)\right)\times e^{-\mu t}\right).
\end{equation*}
Therefore
\begin{equation*}
\mathbb{P}\left(\dfrac{1}{\delta_\varepsilon}\leqslant N(t)\leqslant \delta_{\varepsilon}\right) \geqslant \mathbb{P}\left(N(t)+\dfrac{1}{N(t)}\leqslant \delta_{\varepsilon}\right)\geqslant 1-\varepsilon-\dfrac{1}{\delta_\varepsilon} \left(\widetilde{V}\left(N(0)\right)\times e^{-\mu t}\right).
\end{equation*}
By noting that $$\frac{1}{\sqrt{7}}\underbrace{(S+Q+E+A+I+H+R)}_N\leqslant\underbrace{\sqrt{S^2+Q^2+E^2+A^2+I^2+H^2+R^2}}_{\left\|X\right\|}\leqslant \underbrace{S+Q+E+A+I+H+R}_N,$$ we obtain \vspace{-5pt}
$$1-\varepsilon-\dfrac{1}{\delta_\varepsilon} \left(\widetilde{V}\left(N(0)\right)\times e^{-\mu t}\right)\leqslant\mathbb{P}\left(\dfrac{1}{\delta_\varepsilon}\leqslant N(t)\leqslant \delta_{\varepsilon}\right) \leqslant \mathbb{P}\left(\dfrac{1}{\sqrt{7}\delta_\varepsilon}\leqslant\left\|X\left(t,X_0\right)\right\|\leqslant \delta_{\varepsilon}\right),$$
so \vspace{-9pt} $$\liminf\limits_{t\to\infty}\mathbb{P}\Big(\overbrace{\dfrac{1}{\sqrt{7}\delta_\varepsilon}}^{\displaystyle{:=\eta_\varepsilon}}\leqslant\left\|X\left(t,X_0\right)\right\|\leqslant \delta_{\varepsilon}\Big)\geqslant 1-\varepsilon.$$ 
Hence, the theorem has been proved.
\end{proof}
\subsection{Stochastic extinction of COVD-19}
In epidemiology, we are usually concerned about two things, the first, is  to know when the disease will die out, and the second, is when it well persist. In this subsection, we will try our best to find a condition for the extinction of the disease expressed in terms of system parameters and intensities of noises, and for the persistence, it will be dealt with in the next subsection.
\begin{defi}[Stochastic extinction \cite{cai2017stochastic}]
For system \eqref{systo}, the infected individuals $E(t),A(t)$ and $I(t)$ are said to be stochastically extinct, or extinctive, if
$\lim\limits_{t\to\infty} E(t)+I(t)+A(t)=0$ almost surely.    
\end{defi} 
Before stating the result to be proved, we must firstly give the following  useful lemma that was stated and proved as Lemma 3.1 in \cite{zhang2017threshold}. 
\begin{lemm}\label{lemme1}
For any initial value $X_0\in\mathbb{R}_+^7$, the solution $X(t)=\left(S(t),Q(t),E(t),A(t),I(t),H(t), R(t)\right)$ of system \eqref{systo2} verifies the following properties:
\begin{enumerate}[label=$(\textup{\alph*})$]
\item $\lim\limits_{t\to\infty}\dfrac{X_k(t)}{t}=0~\text{a.s.}\quad \forall k\in\left\lbrace 1,2,\cdots,7\right\rbrace.$\label{lemme1a}
\item Moreover, if $\mu>\frac{1}{2}\left(\sigma_1^2\vee \sigma_2^2 \vee \sigma_3^2\vee \sigma_4^2 \vee \sigma_5^2\vee \sigma_6^2 \vee \sigma_7^2\right)$, then\\[3pt]
\hspace*{3cm}$\lim\limits_{t\to\infty}\dfrac{\int_0^t X_k(s)\,\textup{d}B_k(s)}{t}=0\quad \text{a.s.}\quad \forall k\in\left\lbrace 1,2,\cdots,7\right\rbrace.$\label{lemme1b}
\end{enumerate}
\end{lemm}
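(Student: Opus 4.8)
The plan is to reduce both assertions to estimates on the total size $N(t)=S(t)+Q(t)+E(t)+A(t)+I(t)+H(t)+R(t)$. Since Theorem \ref{thm1} keeps the solution in $\mathbb{R}_+^7$, each coordinate obeys $0\leqslant X_k(t)\leqslant N(t)$, so part (a) follows once $\lim_{t\to\infty}N(t)/t=0$; moreover the local martingale $M_k(t)=\int_0^t X_k(s)\,\mathrm{d}B_k(s)$ has quadratic variation $\int_0^t X_k(s)^2\,\mathrm{d}s\leqslant\int_0^t N(s)^2\,\mathrm{d}s$, so part (b) reduces to an $L^2$-control of $N$. The engine for both is It\^o's formula applied to $N^{p}$, which, using the elementary bounds $\sum_{i=1}^7\sigma_i^2X_i^2\leqslant(\max_i\sigma_i^2)N^2$ (the $X_i$ being nonnegative) and $-d_AA-d_II-d_HH\leqslant0$, gives $\mathcal{L}(N^{p})\leqslant p\Lambda N^{p-1}-p\bigl(\mu-\tfrac{p-1}{2}\max_i\sigma_i^2\bigr)N^{p}$; already the case $p=1$ shows $\mathbb{E}[N(t)]\leqslant\max\{N(0),\Lambda/\mu\}$.

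For part (b) I would use the hypothesis $\mu>\tfrac12\max_i\sigma_i^2$, which is precisely what makes the bracket positive at $p=2$: taking expectations gives $\frac{\mathrm{d}}{\mathrm{d}t}\mathbb{E}[N^2]\leqslant 2\Lambda\,\mathbb{E}[N]-(2\mu-\max_i\sigma_i^2)\mathbb{E}[N^2]$, and since $\mathbb{E}[N]$ is bounded this yields $K_2:=\sup_{t\geqslant0}\mathbb{E}[N(t)^2]<\infty$. Hence
\[
\mathbb{E}\Bigl[\int_0^\infty\frac{X_k(s)^2}{(1+s)^2}\,\mathrm{d}s\Bigr]=\int_0^\infty\frac{\mathbb{E}[X_k(s)^2]}{(1+s)^2}\,\mathrm{d}s\leqslant K_2\int_0^\infty\frac{\mathrm{d}s}{(1+s)^2}<\infty,
\]
so $\int_0^\infty(1+s)^{-2}X_k(s)^2\,\mathrm{d}s<\infty$ almost surely. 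The strong law of large numbers for continuous local martingales then gives $M_k(t)/t\to0$ a.s.: apply the martingale convergence theorem to $\int_0^t(1+s)^{-1}\,\mathrm{d}M_k(s)$, whose quadratic variation $\int_0^t(1+s)^{-2}X_k(s)^2\,\mathrm{d}s$ has just been shown finite, and conclude with Kronecker's lemma (see \cite[Ch.~1]{mao2007stochastic}).

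For part (a) no smallness of the noise is assumed, so $L^2$ may fail and I would work with a fractional moment. Since $\mu-\tfrac{p-1}{2}\max_i\sigma_i^2>0$ for every $p$ sufficiently close to $1$, the same It\^o estimate followed by a Young-inequality absorption of the $N^{p-1}$ term gives $\frac{\mathrm{d}}{\mathrm{d}t}\mathbb{E}[N^{p}]\leqslant C_1-C_2\,\mathbb{E}[N^{p}]$, whence $K_p:=\sup_{t\geqslant0}\mathbb{E}[N(t)^{p}]<\infty$ for some fixed $p=1+\delta\in(1,2)$. Chebyshev's inequality then gives $\mathbb{P}(N(n)>\varepsilon n)\leqslant K_p/(\varepsilon n)^{p}$, which is summable in $n$, so Borel--Cantelli together with $\varepsilon\downarrow0$ produces $\lim_{n\to\infty}N(n)/n=0$ a.s.\ along integer times.

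The main obstacle is the passage from integer times to a genuine continuous-time limit, namely controlling the oscillation $\Theta_n:=\sup_{n\leqslant t\leqslant n+1}|N(t)-N(n)|$ without a global $L^2$ bound. Here I would invoke the Markov property: conditionally on $\mathfrak{F}_n$ the process restarts from the a.s.\ finite value $N(n)$, and over the unit horizon the standard bounded-time moment estimate — available because the drift and diffusion of $N$ grow at most linearly (the bilinear incidence term cancels in the total $N$, leaving the drift $\Lambda-\mu N-d_AA-d_II-d_HH$) — gives $\mathbb{E}\bigl[\Theta_n^{2}\mid\mathfrak{F}_n\bigr]\leqslant C\bigl(1+N(n)^{2}\bigr)$; Jensen brings this down to the available order, $\mathbb{E}\bigl[\Theta_n^{p}\mid\mathfrak{F}_n\bigr]\leqslant C'\bigl(1+N(n)^{p}\bigr)$, and taking expectations with the uniform bound $K_p$ yields $\sup_n\mathbb{E}[\Theta_n^{p}]<\infty$. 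A further Chebyshev--Borel--Cantelli step forces $\Theta_n/n\to0$ a.s., which with the integer-time limit gives $N(t)/t\to0$ and therefore $X_k(t)/t\to0$ a.s. The subtlety throughout is exactly this interplay between the conditional bounded-time $L^2$ estimate and the global fractional-moment bound, needed because the multiplicative noise blocks a direct global $L^2$ control when $\mu$ is small; the careful execution of these estimates is the content of the proof in \cite{zhang2017threshold}.
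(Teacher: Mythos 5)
Your proposal is correct, but there is nothing in the paper to compare it against line by line: the paper does not actually prove Lemma \ref{lemme1} — the statement is imported from Lemma 3.1 of \cite{zhang2017threshold}, and the ``proof'' given is only a pointer to Lemmas 2.1--2.2 of \cite{zhao2014threshold}. What you have written is, in substance, a faithful self-contained reconstruction of that standard argument: It\^{o}'s formula applied to powers $N^{p}$ of the total population (whose drift is linear precisely because the bilinear incidence term cancels in the sum), yielding $\sup_{t\geqslant 0}\mathbb{E}\left[N(t)^{p}\right]<\infty$ unconditionally for $p=1+\delta$ with $\delta<2\mu/\max_i\sigma_i^2$, and for $p=2$ exactly under the hypothesis $\mu>\frac{1}{2}\max_i\sigma_i^2$; then Chebyshev--Borel--Cantelli along integer times plus an oscillation bound for part \ref{lemme1a}, and the weighted--quadratic-variation, martingale-convergence, Kronecker route to the strong law of large numbers (as in \cite[Chapter 1]{mao2007stochastic}) for part \ref{lemme1b}. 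Your write-up also makes explicit something the paper leaves implicit: the noise restriction is needed only to reach second moments in part \ref{lemme1b}, whereas part \ref{lemme1a} is condition-free because moments of order $1+\delta$ already suffice for the Borel--Cantelli step. Two points should be spelled out in a complete version: (i) taking expectations in the It\^{o} estimates requires localization by stopping times (as in the proof of Theorem \ref{thm1}) followed by Fatou's lemma, since the stochastic integrals are a priori only local martingales; (ii) the conditional estimate $\mathbb{E}\left[\Theta_n^{2}\mid\mathfrak{F}_n\right]\leqslant C\left(1+N(n)^{2}\right)$ with a constant uniform in $n$ is legitimate because the solution is a time-homogeneous Markov process and because the drift and diffusion of $N$ are dominated by affine functions of $N$ itself — this is essential, since the coefficients of the full seven-dimensional system are \emph{not} of linear growth, so the bounded-horizon moment estimate must be run on $N$, as you do, and not on $X$. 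With those two routine points filled in, your argument is a complete and valid proof.
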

\begin{proof}
The proof of this lemma is similar in spirit to that of lemmas 2.1 and 2.2 of \cite{zhao2014threshold} and therefore it is omitted here. 
\end{proof}
\begin{theo}\label{thm2}
Let us denote by $X(t)=\big(S(t),Q(t),E(t),A(t),I(t),H(t), R(t)\big)$ the solution of system \eqref{systo} that starts from a given value $X_0=\big(S(0),Q(0),E(0),A(0),I(0),H(0), R(0)\big)\in \mathbb{R}_+^7$.\\If $\mu>\frac{1}{2}\left(\sigma_1^2\vee \sigma_2^2 \vee \sigma_3^2\vee \sigma_4^2 \vee \sigma_5^2\vee \sigma_6^2 \vee \sigma_7^2\right)$ and $\sigma_3^2\wedge\sigma_4^2\wedge \sigma_5^2>6\times\left(\beta_1S^o-\mu\right)$, with $S^o=\frac{\Lambda}{\mu}\cdot\frac{\lambda+\mu}{\lambda+q+\mu}$, then
$$\limsup_{t\to \infty}\dfrac{\ln(E(t)+A(t)+I(t))}{t}\leqslant\beta_1S^o-\mu-\frac{\sigma_3^2\wedge\sigma_4^2\wedge \sigma_5^2}{6}<0 \quad \text{a.s.},$$
which means that the disease will die out exponentially with probability one.
\end{theo}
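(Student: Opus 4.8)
The plan is to track the logarithm of the total infected population $U(t):=E(t)+A(t)+I(t)$ and to show that its per-unit-time growth rate is asymptotically negative a.s. First I would apply the multidimensional It\^o formula to $\ln U(t)$. Summing the $E$, $A$ and $I$ equations of \eqref{systo}, the crucial cancellation $-(\mu+\sigma)E+(1-p)\sigma E+\sigma p E=-\mu E$ removes every $\sigma E$ contribution, so the drift of $U$ reduces to
\[
\left(\beta_1-\beta_2\tfrac{I}{b+I}\right)S(I+\theta A)-\mu E-(\mu+\varepsilon_A+\gamma_A+d_A)A-(\mu+\varepsilon_I+\gamma_I+d_I)I.
\]
For the transmission term I would use $\beta_2\frac{I}{b+I}\geqslant 0$ together with $\theta\in(0,1)$ to bound it by $\beta_1 S(I+A)\leqslant \beta_1 S\,U$, while each linear removal term is bounded below by $\mu$ times the corresponding component; hence $\frac{1}{U}\times(\text{drift})\leqslant \beta_1 S-\mu$.

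The diffusion part of $\ln U$ produces the It\^o correction $-\frac{1}{2U^2}\big(\sigma_3^2E^2+\sigma_4^2A^2+\sigma_5^2I^2\big)$ and the martingale $M(t):=\int_0^t\frac{1}{U}\big(\sigma_3 E\,\mathrm dB_3+\sigma_4 A\,\mathrm dB_4+\sigma_5 I\,\mathrm dB_5\big)$. The factor $\tfrac16$ in the statement is exactly the elementary bound $E^2+A^2+I^2\geqslant \tfrac13(E+A+I)^2=\tfrac13U^2$ (power-mean / Cauchy--Schwarz), which gives $-\frac{1}{2U^2}\big(\sigma_3^2E^2+\sigma_4^2A^2+\sigma_5^2I^2\big)\leqslant -\frac{\sigma_3^2\wedge\sigma_4^2\wedge\sigma_5^2}{6}$. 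Collecting the pieces, integrating from $0$ to $t$ and dividing by $t$ yields
\[
\frac{\ln U(t)}{t}\leqslant \frac{\ln U(0)}{t}+\beta_1\langle S(t)\rangle-\mu-\frac{\sigma_3^2\wedge\sigma_4^2\wedge\sigma_5^2}{6}+\frac{M(t)}{t}.
\]
Since $\frac{E}{U},\frac{A}{U},\frac{I}{U}\leqslant 1$, the quadratic variation of $M$ grows at most linearly, so the strong law of large numbers for martingales gives $M(t)/t\to 0$ a.s.

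The decisive remaining step is to establish $\limsup_{t\to\infty}\langle S(t)\rangle\leqslant S^o$. I would extract this from the averaged $S$ and $Q$ equations: integrate each, divide by $t$, and invoke Lemma \ref{lemme1} --- part \ref{lemme1a} to discard $S(t)/t$ and $Q(t)/t$, and part \ref{lemme1b} (which is precisely where the hypothesis $\mu>\tfrac12\max_i\sigma_i^2$ is used) to discard the stochastic integrals $\frac1t\int_0^tS\,\mathrm dB_1$ and $\frac1t\int_0^tQ\,\mathrm dB_2$. Dropping the nonnegative transmission term gives, in the limit superior, $(\mu+q)\langle S\rangle\leqslant \Lambda+\lambda\langle Q\rangle$, while the $Q$ identity forces $(\mu+\lambda)\langle Q\rangle= q\langle S\rangle$ asymptotically; eliminating $\langle Q\rangle$ produces $\big(\mu+q-\tfrac{\lambda q}{\mu+\lambda}\big)\langle S\rangle\leqslant\Lambda$, that is $\limsup_{t\to\infty}\langle S(t)\rangle\leqslant \frac{\Lambda}{\mu}\cdot\frac{\mu+\lambda}{\mu+q+\lambda}=S^o$. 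Substituting back gives $\limsup_{t\to\infty}\frac{\ln U(t)}{t}\leqslant \beta_1 S^o-\mu-\frac{\sigma_3^2\wedge\sigma_4^2\wedge\sigma_5^2}{6}$, which is strictly negative exactly under the second hypothesis, and the a.s.\ exponential extinction of $E+A+I$ follows.

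I expect the main obstacle to be controlling the Ces\`aro average $\langle S(t)\rangle$ in the stochastic setting. Unlike the deterministic case, the region $\mathcal S=\{S\leqslant S^o\}$ of Theorem \ref{thinv} is \emph{not} invariant once the white noise $\sigma_1 S\,\mathrm dB_1$ is switched on, so one cannot assert $S(t)\leqslant S^o$ pathwise; the bound survives only in the time-averaged sense and must be derived carefully through Lemma \ref{lemme1}, with attention to the interplay of $\limsup$ and $\liminf$ when the two averaged identities are combined.
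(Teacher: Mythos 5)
Your proposal is correct and follows essentially the same route as the paper: Itô's formula applied to $\ln(E+A+I)$, the bound $E^2+A^2+I^2\geqslant\tfrac13(E+A+I)^2$ to produce the $1/6$ factor, the strong law of large numbers for the martingale part, and the derivation of $\limsup_{t\to\infty}\langle S(t)\rangle\leqslant S^o$ from the time-averaged $S$ and $Q$ equations via Lemma \ref{lemme1}\ref{lemme1b} (this is exactly inequality \eqref{ineq} and limit \eqref{S} in the paper). Your closing caution about $\mathcal{S}$ no longer being invariant under the noise, so that the bound on $S$ only survives in Cesàro average, is precisely why the paper also takes this averaged detour rather than a pathwise one.
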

\begin{proof}
From It\^{o}'s formula and system \eqref{systo}, we have
\begin{align*}
\text{d}\ln\left(E+A+I\right)&=\left[\dfrac{1}{E+A+I}\Bigg(\Big(\beta_1-\beta_2\dfrac{I}{b+I}\Big)S\left(I+\theta A\right)-\left(\varepsilon_A+\gamma_A+d_A\right)A-\left(\varepsilon_I+\gamma_I+d_I\right)I\Bigg)-\mu\right.\\
&\quad\left.-\dfrac{\sigma_3^2~E^2+\sigma_4^2~A^2+\sigma_5^2~I^2}{2\left(E+A+I\right)^2}\right]\text{d}t+\dfrac{1}{E+A+I}\big(\sigma_3 E~\text{d}B_3(t)+\sigma_4 A~\text{d}B_4(t)+\sigma_5I~\text{d}B_5(t)\big).
\end{align*}
Thus
\begin{align*}
 \text{d}\ln\left(E+A+I\right)&\leqslant \left[\Big(\beta_1-\beta_2\dfrac{I}{b+I}\Big)S-\mu-\dfrac{\sigma_3^2\wedge\sigma_4^2\wedge \sigma_5^2}{2}\times \dfrac{E^2+A^2+I^2}{\left(E+A+I\right)^2}\right]\text{d}t\\
&\quad+\sigma_3\dfrac{E}{E+A+I}~\text{d}B_3(t)+\sigma_4\dfrac{A}{E+A+I}~\text{d}B_4(t)
+\sigma_5\dfrac{I}{E+A+I}~\text{d}B_5(t).
\end{align*}
By using the famous Cauchy-Schwartz inequality (see for instance \cite{yin2017new} and the references given there), we can assert that $$\frac{E^2+A^2+I^2}{\left(E+A+I\right)^2}\geqslant \frac{1}{3}.$$ 
Hence
\begin{align}\label{eq5}
 \text{d}\ln\left(E+A+I\right)&\leqslant \left[\beta_1 S-\mu-\dfrac{\sigma_3^2\wedge\sigma_4^2\wedge \sigma_5^2}{6}\right]\text{d}t+\sigma_3\dfrac{E}{E+A+I}~\text{d}B_3(t)+\sigma_4\dfrac{A}{E+A+I}~\text{d}B_4(t)\nonumber \\
&\quad+\sigma_5\dfrac{I}{E+A+I}~\text{d}B_5(t),
\end{align}
Integrating \eqref{eq5} from $0$ to $t$, and then dividing by $t$ on both sides, we get
\begin{align}\label{eq6}
\dfrac{\ln\left(E(t)+A(t)+I(t)\right)}{t}&\leqslant \dfrac{\ln\left(E(0)+A(0)+I(0)\right)}{t}+\beta_1 \langle S(t)\rangle-\mu-\dfrac{\sigma_3^2\wedge\sigma_4^2\wedge \sigma_5^2}{6}\nonumber\\
&\quad+\dfrac{\sigma_3}{t}\int_0^t\dfrac{E(s)}{E(s)+A(s)+I(s)}~\text{d}B_3(s)+\dfrac{\sigma_4}{t}\int_0^t\dfrac{A(s)}{E(s)+A(s)+I(s)}~\text{d}B_4(s)\nonumber\\
&\quad +\dfrac{\sigma_5}{t}\int_0^t\dfrac{I(s)}{E(s)+A(s)+I(s)}~\text{d}B_5(s).
\end{align}
On the other hand, the first equation of \eqref{systo} gives
\begin{align*}
S(t)-S(0)&=\Lambda t -\int_0^t \left(\beta_1-\beta_2\dfrac{I(s)}{b+I(s)}\right) S(s)\left(I(s)+\theta A(s)\right)~\mathrm{d}s+\lambda \int_0^t Q(s)~\mathrm{d}s -(q+\mu)\int_0^t S(s)~\mathrm{d}s\\
&\quad+\sigma_1 \int_0^t S(s)~\textup{d}B_{1}(s)\\
&\leqslant \Lambda t+\lambda \int_0^t Q(s)~\mathrm{d}s-(q+\mu)\int_0^t S(s)~\mathrm{d}s+\sigma_1 \int_0^t S(s)~\textup{d}B_{1}(s).
\end{align*}
Therefore
\begin{align}
\langle S(t)\rangle=\dfrac{1}{t}\int_0^t S(s)~\mathrm{d}s &\leqslant \dfrac{1}{q+\mu}\left(\Lambda+\dfrac{\lambda}{t}\int_0^tQ(s)~\text{d}s+\dfrac{S(0)}{t}+\dfrac{\sigma_1}{t} \int_0^t S(s)~\textup{d}B_{1}(s)-\dfrac{S(t)}{t}\right) \nonumber \\ 
&\leqslant \dfrac{1}{q+\mu}\left(\Lambda+\lambda \langle Q(t)\rangle+\dfrac{S(0)}{t}+\dfrac{\sigma_1}{t} \int_0^t S(s)~\textup{d}B_{1}(s)\right).\label{ligne3}
\end{align}
Also, the second one gives
\begin{align*}
Q(t)-Q(0)=q \int_0^t S(s)~\text{d}s -\left(\mu+\lambda\right)\int_{0}^t Q~\text{d}s +\sigma_2\int_0^t Q(s)~\text{d}B_2(s),
\end{align*}
which shows that
\begin{align}
\langle Q(t)\rangle=\dfrac{1}{t}\hspace{-1pt}\int_0^t\hspace{-3pt}Q(s)\,\text{d}s&=\dfrac{1}{\lambda+\mu}\left(\dfrac{Q(0)-Q(t)}{t}\hspace{-1pt}+\hspace{-1pt}\frac{q}{t}\hspace{-1pt}\int_0^t \hspace{-3pt}S(s)\,\text{d}s\hspace{-1pt}+\hspace{-1pt}\dfrac{\sigma_2}{t}\hspace{-1pt}\int_0^t\hspace{-3pt} Q(s)\,\text{d}B_2(s)\right)\label{ligne4}\\
&\leqslant  \dfrac{1}{\lambda+\mu}\times\dfrac{Q(0)}{t}+\dfrac{q}{\lambda+\mu}\langle S(t)\rangle+\dfrac{\sigma_2}{\lambda+\mu}\times \dfrac{1}{t}\int_0^t Q(s)\,\text{d}B_2(s).\label{ligne5}
\end{align}
Combining \eqref{ligne3} with \eqref{ligne5} yields
\begin{align*}
\langle S(t) \rangle\hspace{-2pt}\leqslant\hspace{-2pt}\dfrac{1}{q+\mu}\hspace{-0.9pt}\left(\Lambda\hspace{-1pt}+\hspace{-1pt}\lambda\left(\hspace{-1pt}\dfrac{Q(0)}{\left(\lambda+\mu\right)t}\hspace{-1pt}+\hspace{-1pt}\dfrac{q}{\lambda+\mu}\langle S(t)\rangle\hspace{-1pt}+\hspace{-1pt}\dfrac{\sigma_2}{\lambda+\mu}\hspace{-3pt}\times\hspace{-3pt}\dfrac{1}{t}\int_0^t \hspace{-3pt} Q(s)\,\text{d}B_2(s)\hspace{-1pt}\right)\hspace{-1pt}+\hspace{-1pt}\dfrac{S(0)}{t}\hspace{-1pt}+\hspace{-1pt}\dfrac{\sigma_1}{t} \int_0^t \hspace{-3pt}S(s)\,\textup{d}B_{1}(s)\right)\hspace{-1pt}.
\end{align*}
Hence,
\begin{align}\label{ineq}
\langle S(t)\rangle\hspace{-1pt}&\leqslant\hspace{-1pt} \dfrac{\Lambda(\lambda+\mu)}{\mu(q+\mu+\lambda)}\hspace{-1pt}+\hspace{-1pt}\dfrac{\lambda}{\mu(q+\mu+\lambda)}\dfrac{Q(0)}{t}\hspace{-1pt}+\hspace{-1pt}\dfrac{\lambda+\mu}{\mu(q+\mu+\lambda)}\dfrac{S(0)}{t}\hspace{-1pt}+\hspace{-1pt}\dfrac{\lambda\sigma_2}{\mu(q+\mu+\lambda)}\hspace{-1pt}\times\hspace{-1pt} \frac{1}{t}\int_0^t\hspace{-3pt} Q(s)\,\text{d}B_2(s)\hspace{-1pt}\nonumber\\
&\quad+\hspace{-1pt}\sigma_1 \dfrac{(\lambda+\mu)}{\mu(q+\mu+\lambda)}\times \frac{1}{t}\int_0^t\hspace{-3pt} S(s)\,\text{d}B_1(s).
\end{align}
Since $\mu>\frac{1}{2}\left(\sigma_1^2\vee \sigma_2^2 \vee \sigma_3^2\vee \sigma_4^2 \vee \sigma_5^2\vee \sigma_6^2 \vee \sigma_7^2\right)$, we can conclude by virtue of Lemma \ref{lemme1}\ref{lemme1b} and inequality \eqref{ineq}  that
\begin{equation}\label{S}
\lim_{t\to\infty}\langle S(t)\rangle\leqslant \dfrac{\Lambda}{\mu}\times\dfrac{\lambda+\mu}{q+\mu+\lambda}=S^o.
\end{equation}
According to the strong law of large numbers for local martingales \big(see \cite[page 12]{mao2007stochastic}\big), we have
\begin{equation}\label{eq7}
\left\{\begin{aligned}
\lim_{t\to \infty}\dfrac{\sigma_3}{t}\int_0^t\dfrac{E(s)}{E(s)+A(s)+I(s)}~\text{d}B_3(s)&=0\quad \text{a.s.} ,\\
\lim_{t\to \infty}\dfrac{\sigma_4}{t}\int_0^t\dfrac{A(s)}{E(s)+A(s)+I(s)}~\text{d}B_4(s)&=0\quad \text{a.s.} ,\\
\lim_{t\to \infty}\dfrac{\sigma_5}{t}\int_0^t\dfrac{I(s)}{E(s)+A(s)+I(s)}~\text{d}B_5(s)&=0\quad \text{a.s.}
\end{aligned} \right.
\end{equation}
From \eqref{eq6}, \eqref{S} and \eqref{eq7} we get 
$$\limsup_{t\to \infty}\dfrac{\ln(E(t)+A(t)+I(t))}{t}\leqslant\beta_1S^o-\mu-\frac{\sigma_3^2\wedge\sigma_4^2\wedge \sigma_5^2}{6}<0 \quad \text{a.s.} ,$$
which is  exactly  the desired conclusion.
\end{proof}
\begin{rema} Unlike Theorem 4.1 of \cite{liu2019dynamics}, the above proof uses just the second  assertion of Lemma \ref{lemme1}, and makes no appeal to the first one.
\end{rema}
\begin{rema}\label{rema}
Needless to say, the preceding theorem implies the stochastic extinction of infected individuals, which implies in turn (by the positivity of the solution) that $\lim\limits_{t\to\infty}\hspace{-3pt}E(t)=0$,\hspace{-3pt} $\lim\limits_{t\to\infty}\hspace{-3pt}A(t)=0$ and \hspace{-3pt} $\lim\limits_{t\to\infty}\hspace{-3pt}I(t)=0$ a.s. (see for   example \cite{kiouach2011global,song2018extinction} and \cite[page 5071]{ji2014threshold}).        
\end{rema}
\begin{coro}\label{coro}
Under the same notations and hypotheses as in Theorem  \ref{thm2}, we have
$$\lim_{t\to\infty}\langle S(t)\rangle=S^{o}~\text{a.s.},\quad \lim_{t\to\infty}\langle Q(t)\rangle=Q^o~\text{a.s.},\quad \lim_{t\to\infty}\langle R(t)\rangle=0~\text{a.s.},\quad \text{and}~~\lim_{t\to\infty}\langle H(t)\rangle=0~\text{a.s.}$$
\end{coro}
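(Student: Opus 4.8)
The plan is to integrate each scalar equation of \eqref{systo2} over $[0,t]$, divide by $t$, and let $t\to\infty$, reading off the four stated limits from the surviving drift averages. The engine of the argument is the combination of two facts that hold under the present hypotheses. First, Theorem \ref{thm2} together with the positivity of the solution (Remark \ref{rema}) gives $\lim_{t\to\infty}E(t)=\lim_{t\to\infty}A(t)=\lim_{t\to\infty}I(t)=0$ a.s., so that by the Ces\`aro mean the averages $\langle E(t)\rangle,\langle A(t)\rangle,\langle I(t)\rangle$ all tend to $0$ a.s. Second, since $\mu>\tfrac12(\sigma_1^2\vee\cdots\vee\sigma_7^2)$, Lemma \ref{lemme1}\ref{lemme1a} forces $X_k(t)/t\to0$ and Lemma \ref{lemme1}\ref{lemme1b} forces $t^{-1}\int_0^tX_k(s)\,\mathrm{d}B_k(s)\to0$ a.s. for every $k$. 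Hence, after integrating an equation and dividing by $t$, every boundary term and every stochastic integral vanishes in the limit.

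I would first dispose of the two decoupled sink classes. Integrating the $H$-equation and dividing by $t$ gives $\tfrac{H(t)-H(0)}{t}=\varepsilon_I\langle I(t)\rangle+\varepsilon_A\langle A(t)\rangle-(\mu+d_H+\gamma_H)\langle H(t)\rangle+\tfrac{\sigma_6}{t}\int_0^tH\,\mathrm{d}B_6$; letting $t\to\infty$ and discarding every term except the last drift average yields $(\mu+d_H+\gamma_H)\langle H(t)\rangle\to0$, i.e. $\langle H(t)\rangle\to0$. Substituting this into the analogous treatment of the $R$-equation, in which $\langle H\rangle,\langle I\rangle,\langle A\rangle$ now all vanish, gives $\mu\langle R(t)\rangle\to0$, hence $\langle R(t)\rangle\to0$. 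The decisive preparatory step for the last two limits is to show that the time-average of the incidence term $\Phi(t):=\bigl(\beta_1-\beta_2\tfrac{I}{b+I}\bigr)S(I+\theta A)$ vanishes. I would obtain this not by bounding $S$ directly, but from the $E$-equation itself: integrating it and dividing by $t$ writes $t^{-1}\int_0^t\Phi(s)\,\mathrm{d}s=\tfrac{E(t)-E(0)}{t}+(\mu+\sigma)\langle E(t)\rangle-\tfrac{\sigma_3}{t}\int_0^tE\,\mathrm{d}B_3$, all three of whose terms tend to $0$, so $t^{-1}\int_0^t\Phi(s)\,\mathrm{d}s\to0$ a.s.

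With the incidence average neutralized, the $S$- and $Q$-equations collapse to a closed asymptotic linear system. Integrating the $S$-equation, dividing by $t$, and using $t^{-1}\int_0^t\Phi\to0$ gives $(\mu+q)\langle S(t)\rangle-\lambda\langle Q(t)\rangle\to\Lambda$; the same treatment of the $Q$-equation gives $-q\langle S(t)\rangle+(\mu+\lambda)\langle Q(t)\rangle\to0$. Since the coefficient matrix $\left(\begin{smallmatrix}\mu+q&-\lambda\\-q&\mu+\lambda\end{smallmatrix}\right)$ has determinant $\mu(\mu+\lambda+q)>0$ and is therefore invertible, the pair $(\langle S(t)\rangle,\langle Q(t)\rangle)$ is a fixed linear image of two converging quantities and hence converges to the unique solution of the limiting system, which a direct computation identifies as $\bigl(\tfrac{\Lambda}{\mu}\cdot\tfrac{\mu+\lambda}{\mu+q+\lambda},\tfrac{\Lambda}{\mu}\cdot\tfrac{q}{\mu+q+\lambda}\bigr)=(S^o,Q^o)$. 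This delivers $\langle S(t)\rangle\to S^o$ and $\langle Q(t)\rangle\to Q^o$ a.s. and completes all four assertions.

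I expect the only genuine obstacle to be the nonlinear incidence term: the naive route of controlling $\langle S\rangle$ and the smallness of $I+\theta A$ separately founders on the absence of a pathwise bound for $S$, whereas extracting the incidence average from the $E$-equation circumvents this difficulty entirely. Everything else reduces to bookkeeping of the integrated equations, the vanishing supplied by Lemma \ref{lemme1}, and one $2\times2$ linear solve.
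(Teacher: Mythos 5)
Your proof is correct, and it runs on the same engine as the paper's: Lemma \ref{lemme1} to kill the boundary terms $X_k(t)/t$ and the stochastic integrals, and extinction plus Ces\`aro to kill $\langle E(t)\rangle$, $\langle A(t)\rangle$, $\langle I(t)\rangle$. The only differences are organizational. Where you neutralize the incidence term by reading $\tfrac{1}{t}\int_0^t\Phi(s)\,\mathrm{d}s\to0$ directly off the integrated $E$-equation and then invert the fixed matrix $\left(\begin{smallmatrix}\mu+q&-\lambda\\-q&\mu+\lambda\end{smallmatrix}\right)$ (determinant $\mu(\mu+\lambda+q)>0$) to get $(\langle S\rangle,\langle Q\rangle)\to(S^o,Q^o)$, the paper instead sums the $S$- and $E$-equations so that $\Phi$ cancels outright, and then substitutes the expression for $\langle Q(t)\rangle$ from the integrated $Q$-equation to reduce to a single scalar relation for $\langle S(t)\rangle$; the two manipulations are algebraically equivalent, but your version makes the role of the vanishing incidence average explicit and the final step a transparent $2\times2$ linear solve, which is arguably cleaner. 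You also correctly insist on the order $H$ before $R$ (since the $R$-average needs $\langle H\rangle\to0$) and write out that part in full, whereas the paper leaves it to the reader. No gaps.
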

\begin{proof}
Our proof starts with the observation that for all $t\geqslant 0$, we have
\begin{equation}\label{eq8}
\text{d}\big(S(t)+E(t)\big)=\left[\Lambda +\lambda Q(t)-\left(q+\mu\right)S(t)-\left(\mu+\sigma\right)E(t)\right]\text{d}t+\sigma_1S(t)~\text{d}B_1(t)+\sigma_3E(t)~\text{d}B_3(t).
\end{equation}
Integrating \eqref{eq8} from $0$ to $t$, and then dividing by $t$ on both sides gives
\begin{equation*}
\begin{aligned}
\dfrac{S(t)+E(t)}{t}-\dfrac{S(0)+E(0)}{t}&=\Lambda+\lambda\langle Q(t)\rangle-\left(q+\mu\right)\langle S(t)\rangle-\left(\mu+\sigma\right)\langle E(t)\rangle+\frac{\sigma_1}{t}\int_0^t S(s)~\text{d}B_1(s)\\
&\quad+\frac{\sigma_3}{t}\int_0^t E(s)~\text{d}B_3(s). 
\end{aligned}
\end{equation*}
 Replacing in the last equality $\langle Q(t)\rangle$ by its expression from \eqref{ligne4} yields 
\begin{align*}
\dfrac{S(t)+E(t)}{t}&=\dfrac{S(0)+E(0)}{t}\hspace{-1pt}+\hspace{-1pt}\Lambda\hspace{-1pt}+\hspace{-1pt}\dfrac{\lambda}{\lambda+\mu}\left(q\langle S(t)\rangle\hspace{-1pt}-\hspace{-1pt}\dfrac{Q(t)-Q(0)}{t}\hspace{-1pt}+\hspace{-1pt}\dfrac{\sigma_2}{t}\hspace{-1pt}\int_0^t\hspace{-3pt} Q(s)\,\text{d}B_2(s)\right)-\left(q+\mu\right)\langle S(t)\rangle\\
&\quad-\left(\mu+\sigma\right)\langle E(t)\rangle+\frac{\sigma_1}{t}\int_0^t S(s)~\text{d}B_1(s)+\frac{\sigma_3}{t}\int_0^t E(s)~\text{d}B_3(s).
\end{align*}
Hence
\begin{align}\label{eq9}
\left(-\frac{\lambda\times q}{\lambda+\mu}+q+\mu\right)\langle S(t)\rangle&=\Lambda+\dfrac{S(0)+E(0)}{t}+\dfrac{\lambda}{\lambda+\mu}\times \frac{Q(0)-Q(t)}{t}-\dfrac{S(t)+E(t)}{t}-\left(\mu+\sigma\right)\langle E(t)\rangle\nonumber\\
&\quad +\dfrac{\lambda}{\lambda+\mu}\dfrac{\sigma_2}{t}\int_0^t\hspace{-3pt}Q(s)\,\text{d}B_2(s)+\dfrac{\sigma_1}{t}\int_0^t\hspace{-3pt}S(s)\,\text{d}B_1(s)+\dfrac{\sigma_3}{t}\int_0^t \hspace{-3pt} E(s)\,\text{d}B_3(s).
\end{align}
Letting $t$ go to infinity on both sides of  \eqref{eq9}, then using \ref{lemme1a} and \ref{lemme1b} of Lemma \ref{lemme1}, we obtain
\begin{equation}
\lim_{t\to \infty}\left(-\frac{\lambda\times q}{\lambda+\mu}+q+\mu\right)\langle S(t)\rangle=\Lambda-\left(\mu+\sigma\right)\lim_{t\to\infty}\langle E(t)\rangle\quad \text{a.s.},
\end{equation}
On the account of  Remark \ref{rema}, we have$$\lim\limits_{t\to \infty} E(t)=0\quad\text{a.s.},$$
 which implies by the continuous version of Ces\`{a}ro's theorem \cite[page 3]{albanese2015continuous} that $$\lim\limits_{t\to\infty} \langle E(t)\rangle=0\quad \text{a.s.}$$
So
$$\lim_{t \to \infty}\left(-\frac{\lambda\times q}{\lambda+\mu}+q+\mu \right)\langle S(t)\rangle=\Lambda.$$
Therefore
\begin{equation}
\lim_{t\to \infty}\langle S(t)\rangle=\Lambda\left(-\frac{\lambda\times q}{\lambda+\mu}+q+\mu\right)^{-1}=\Lambda\times\dfrac{\lambda+\mu}{\mu\left(\lambda+q+\mu\right)}=S^o.
\end{equation} 
At the same time, we have
 $$\mathrm{d}Q(t)=\left[qS(t)-\left(\lambda+\mu\right)Q(t)\right]\mathrm{d}t+\sigma_{2}Q(t)~\mathrm{d}B_2(t).$$
Then $$\dfrac{Q(t)-Q(0)}{t}=q\langle S(t)\rangle -\left(\lambda+\mu\right)\langle Q(t)\rangle+\frac{\sigma_2}{t}\int_0^t Q(s)\,\text{d}B_2(s).$$
Hence
$$\langle Q(t)\rangle=\dfrac{q}{\lambda+\mu}\langle S(t)\rangle-\dfrac{Q(t)}{(\lambda+\mu)t}+\dfrac{Q(0)}{(\lambda+\mu)t}+\dfrac{\sigma_2}{\lambda+\mu}\times\dfrac{1}{t}\int_0^t Q(s)\,\text{d}B_2(s).$$
Consequently, and by a passage to the limit similar to the above, we get
$$\lim_{t\to\infty}\langle Q(t)\rangle=\dfrac{q}{q+\mu}\times S^o=Q^o.$$
The same reasoning remains valid for the two last assertions of our theorem $\big(\langle R(t)\rangle,\langle H(t)\rangle\underset{n\to\infty}{\longrightarrow} 0~~\text{a.s.} \big)$, and this finishes the proof, the detailed verification being  left to the reader.  
\end{proof}
\begin{rema}\label{remmm}
Obviously, if we keep the same notations and assumptions as in Theorem \ref{thm2}, the last result can be rewritten as follows:\\
\hspace*{7cm} $\lim\limits_{t\to\infty}\langle X(t)\rangle=\mathcal{E}_o\quad {a.s.}$,\\[6pt] 
which implies by  Ces\`{a}ro's theorem \cite[page 3]{albanese2015continuous}, that 
 if the solution $X(t)$ has a limit (finite or infinite)  as $t$ approaches infinity, almost everywhere (a.e.for brevity), then necessarily this limit will be equal to $\mathcal{E}_o$.
\end{rema}
\subsection{Persistence in the mean of COVID-19}
In the following, we give a condition for the persistence in the mean of the disease, but before stating the main result, we shall first recall the concept of persistence in the mean. 
\begin{defi}[Persistence in the mean \cite{song2018extinction,sun2020dynamics}]
For system \eqref{systo}, the infectious individuals $A(t)$ and $I(t)$ are said to be strongly persistent in the mean, or just persistent in the mean, if $\liminf\limits_{t\to\infty}\langle A(t)+I(t)\rangle>0$ almost surely. 
\end{defi} 
\begin{rema}
In the last definition, some authors use the word "Permanent" instead of "Persistent" \cite{han2018dynamics}, but others prefer to avoid this terminology so as not to confuse the notion of Persistence in the mean, with that of stochastic permanence which is completely different (see Definition \ref{perm}).
\end{rema}
For brevity and simplicity in writing the next results, it will be convenient to adopt the following notations:
\begin{itemize}
\item[$\bullet$] $\rho_1(\alpha)=3\times\sqrt[3]{\Lambda \left(\beta_1-\beta_2\right)\sigma}\times\left(\sqrt[3]{\theta(1-p)\times\alpha}+\sqrt[3]{p\times(1-\alpha) }\right),\quad \forall \alpha\in (0,1),$
\item[$\bullet$] $\rho_2=7\mu+\sigma+\left(\varepsilon_A+\gamma_A+d_A\right)+\left(\varepsilon_I+\gamma_I+d_I\right)+\left(d_H+\gamma_H\right)+\left|\lambda-q\right|+\dfrac{1}{2}\sum\limits_{i=1}^7\sigma_i^2,$
\item[$\bullet$] $\widehat{\alpha}=\dfrac{\sqrt{\theta(1-p)}}{\sqrt{\theta(1-p)}+\sqrt{p}}\in\left(0,1\right).$
\end{itemize}
\begin{lemm}\label{lass}
For any $\alpha\in(0,1)$, the following inequality is satisfied
\begin{equation}
\rho_1(\alpha)\leqslant \rho_1 (\widehat{\alpha}). 
\end{equation}
In other terms, $\rho_1 (\widehat{\alpha})$  is the maximum value of $\rho_1(\alpha)$ on the open interval $(0,1)$.
\end{lemm}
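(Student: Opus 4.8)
The plan is to strip off the constant prefactor and reduce the statement to a one–variable optimization. Since $\beta_1\geqslant\beta_2>0$ and $\Lambda,\sigma>0$, the quantity $3\sqrt[3]{\Lambda(\beta_1-\beta_2)\sigma}$ is a nonnegative constant independent of $\alpha$; if it vanishes (the degenerate case $\beta_1=\beta_2$) then $\rho_1\equiv 0$ and the claim is trivial, so I assume it is strictly positive. Maximizing $\rho_1$ on $(0,1)$ is then equivalent to maximizing
$$g(\alpha):=\sqrt[3]{\theta(1-p)\,\alpha}+\sqrt[3]{p\,(1-\alpha)},\qquad \alpha\in(0,1),$$
and the whole lemma becomes the assertion that $g$ attains its maximum at $\widehat{\alpha}$.

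First I would record that $g$ is strictly concave on $(0,1)$, being the sum of $\alpha\mapsto(\theta(1-p)\alpha)^{1/3}$ and $\alpha\mapsto(p(1-\alpha))^{1/3}$, each the cube root of an affine function with the correct sign and hence strictly concave on the relevant interval. Concavity guarantees that any interior stationary point of $g$ is automatically its unique global maximizer, so it suffices to solve $g'(\alpha)=0$ and identify the root. Differentiating,
$$g'(\alpha)=\tfrac13\bigl[\theta(1-p)\bigr]^{1/3}\alpha^{-2/3}-\tfrac13\,p^{1/3}(1-\alpha)^{-2/3},$$
and setting this to zero gives $\bigl(\theta(1-p)/p\bigr)^{1/3}=\bigl(\alpha/(1-\alpha)\bigr)^{2/3}$; raising to the power $3/2$ yields $\sqrt{\theta(1-p)/p}=\alpha/(1-\alpha)$, whose solution is exactly
$$\alpha=\frac{\sqrt{\theta(1-p)}}{\sqrt{\theta(1-p)}+\sqrt{p}}=\widehat{\alpha}.$$
By concavity this is the unique maximizer, so $\rho_1(\alpha)\leqslant\rho_1(\widehat{\alpha})$ for every $\alpha\in(0,1)$.

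There is no real obstacle here; the only mild subtlety is that $g'$ blows up at the endpoints $\alpha=0,1$, but this is harmless since continuity of $g$ on $[0,1]$ together with strict concavity already forces the maximum to sit at the single interior stationary point. As a cross–check, substituting $\widehat{\alpha}$ (with $u=\sqrt{\theta(1-p)}$, $v=\sqrt p$) collapses $g(\widehat{\alpha})$ to the clean value $(u+v)^{2/3}$, and one can also avoid differentiation altogether: applying H\"older's inequality with conjugate exponents $3/2$ and $3$ to the pairs $(u^{2/3},v^{2/3})$ and $(\alpha^{1/3},(1-\alpha)^{1/3})$ gives $u^{2/3}\alpha^{1/3}+v^{2/3}(1-\alpha)^{1/3}\leqslant(u+v)^{2/3}$ directly, which is a one–line proof of the same bound. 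I would present the concavity/calculus argument as the main route, since it explains the form of $\widehat{\alpha}$, and mention the H\"older estimate as the slick alternative.
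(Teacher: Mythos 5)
Your proposal is correct. The main calculus route is essentially the paper's: both differentiate, locate the unique interior critical point at $\widehat{\alpha}$, and conclude it is the global maximizer. The difference is in how that last step is certified. The paper factors $\rho_1'(\alpha)$ explicitly into a manifestly positive quantity times $(\widehat{\alpha}-\alpha)$, which requires a somewhat laborious algebraic manipulation of the cube roots (and, incidentally, the paper then misstates the resulting monotonicity — it says $\rho_1$ ``decreases'' on $(0,\widehat{\alpha})$ and ``increases'' on $(\widehat{\alpha},1)$, which is backwards given that sign factorization, though the conclusion is unaffected). You instead strip the constant prefactor, observe that $g(\alpha)=\sqrt[3]{\theta(1-p)\alpha}+\sqrt[3]{p(1-\alpha)}$ is strictly concave as a sum of cube roots of affine functions, and let concavity do the work of promoting the stationary point to the unique global maximum; this avoids the factorization entirely and is cleaner. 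Your H\"older alternative with exponents $3/2$ and $3$ applied to $(u^{2/3},v^{2/3})$ and $(\alpha^{1/3},(1-\alpha)^{1/3})$, where $u=\sqrt{\theta(1-p)}$ and $v=\sqrt{p}$, is a genuinely different and derivative-free proof that gives the sharp bound $g(\alpha)\leqslant(u+v)^{2/3}=g(\widehat{\alpha})$ in one line; its only cost is that it does not by itself explain where $\widehat{\alpha}$ comes from, which your calculus argument supplies. Both of your routes, and your handling of the degenerate case $\beta_1=\beta_2$ and of the endpoint behaviour, are sound.
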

\begin{proof}
We start our proof by observing that the function $\rho_1( \alpha)$ is differentiable on $(0,1)$, with a first derivative given by:
\begin{align*}
\rho_1^{\prime}( \alpha):=\dfrac{\textup{d}\rho_1(\alpha)}{\textup{d}\alpha}&=\sqrt[3]{\Lambda \left(\beta_1-\beta_2\right)\sigma} \left(\dfrac{\sqrt[3]{\theta(1-p)}}{\sqrt[3]{\alpha^2}}-\dfrac{\sqrt[3]{p}}{\sqrt[3]{(1-\alpha)^2}}\right)\\
&=\dfrac{\sqrt[3]{\Lambda \left(\beta_1-\beta_2\right)\sigma}}{\sqrt[3]{\left(\alpha\times(1-\alpha)\right)^2}}\times\dfrac{\theta(1-p)(1-\alpha)^2-p\alpha^2}{\left(\sqrt[3]{\theta(1-p)\left(1-\alpha\right)^2}\right)^2+\sqrt[3]{\theta(1-p)p(1-\alpha)^2\alpha^2}+\left(\sqrt[3]{p\alpha^2}\right)^2}\\
&=\dfrac{\sqrt[3]{\Lambda \left(\beta_1-\beta_2\right)\sigma}\left(\sqrt{\theta(1-p)}(1-\alpha)+\sqrt{p}\alpha\right)\left(\sqrt{\theta(1-p)}+\sqrt{p}\right)}{\left(\left(1-\alpha\right)\sqrt[3]{\theta(1-p)\alpha}\right)^2+\sqrt[3]{\theta p(1-p)\alpha^4(1-\alpha)^4}+\left(\alpha\sqrt[3]{p(1-\alpha)}\right)^2}\times \left(\widehat{\alpha}-\alpha\right).
\end{align*}
As it can be seen, the derivative $\rho_1^{\prime}(\alpha)$ and the linear function $L(\alpha)=\widehat{\alpha}-\alpha$ have the same sign, so the function $\rho_1(\alpha)$ decreases for $\alpha\in(0,\widehat{\alpha})$ and increases for $\alpha\in(\widehat{\alpha},1)$. Therefore, the highest value of $\rho_1$ in the interval $(0,1)$ is $\rho_1 (\widehat{\alpha})$, and this is precisely the assertion of the lemma.
\end{proof}
 \begin{theo}\label{las}
If $\rho_1(\widehat{\alpha})>\rho_2$, then for any $X_0\hspace{-2pt}\in\hspace{-2pt} \mathbb{R}_+^7$,\hspace{-1pt} the solution $X\hspace{-0.5pt}(t)\hspace{-2pt}=\hspace{-2pt}\big(\hspace{-1pt}S(t),Q(t),E(t),A(t),I(t),H(t),R(t)\hspace{-1pt}\big)$ of the initial-value problem \eqref{systo2} verifies the following property:
$$\liminf\limits_{t\to \infty} \langle I(t)+A(t)\rangle\geqslant\dfrac{1}{\beta_1}\left(\rho_1(\widehat{\alpha})-\rho_2\right)>0\quad \text{a.s.,}$$
which is to say that  the infectious individuals $A(t)$ and $I(t)$ are persistent in the mean.
\end{theo}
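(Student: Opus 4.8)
The plan is to build a single logarithmic Lyapunov functional out of all seven state variables and read off the persistence bound from its Itô differential. Concretely, I would set $V=\ln S+\ln Q+\ln E+\ln A+\ln I+\ln H+\ln R$ and apply the multidimensional Itô formula. Because each component carries a proportional noise $\sigma_iX_i\,\mathrm{d}B_i$, the diffusion part of $\mathrm{d}\ln X_i$ reduces to the constant $\sigma_i\,\mathrm{d}B_i$, so the whole martingale part of $\mathrm{d}V$ is simply $\sum_{i=1}^{7}\sigma_i\,\mathrm{d}B_i(t)$, while each $-\tfrac12\sigma_i^2$ Itô correction feeds into the constant $\tfrac12\sum_i\sigma_i^2$ of $\rho_2$. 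The drift of $\mathrm{d}V$ then collects the positive inflow fractions $\tfrac{\Lambda}{S},\tfrac{\lambda Q}{S},\tfrac{qS}{Q},\tfrac{\beta S(I+\theta A)}{E},\tfrac{(1-p)\sigma E}{A},\tfrac{\sigma p E}{I},\tfrac{\varepsilon_I I+\varepsilon_A A}{H},\tfrac{\gamma_H H+\gamma_I I+\gamma_A A}{R}$, the loss term $-\beta(I+\theta A)$ with $\beta:=\beta_1-\beta_2\tfrac{I}{b+I}$, and the negative constants $-(\mu+q),\dots,-\mu$.

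The core of the argument is a careful lower bound on this drift. First I would discard the two nonnegative fractions coming from $\ln H$ and $\ln R$, and bound the outflow by $-\beta(I+\theta A)\geq-\beta_1(I+A)$ using $\beta\leq\beta_1$ and $\theta\in(0,1)$. Next, the susceptible/quarantined exchange is handled by AM--GM: $\tfrac{\lambda Q}{S}+\tfrac{qS}{Q}\geq 2\sqrt{\lambda q}\geq 2\min\{\lambda,q\}$, so that $-q-\lambda+\tfrac{\lambda Q}{S}+\tfrac{qS}{Q}\geq-|\lambda-q|$, which is exactly the $|\lambda-q|$ appearing in $\rho_2$. The crucial step is to split $\tfrac{\Lambda}{S}=\alpha\tfrac{\Lambda}{S}+(1-\alpha)\tfrac{\Lambda}{S}$ and $\tfrac{\beta S(I+\theta A)}{E}\geq\tfrac{(\beta_1-\beta_2)S\theta A}{E}+\tfrac{(\beta_1-\beta_2)SI}{E}$, and then apply the three-term AM--GM along each infection pathway:
\[
\alpha\tfrac{\Lambda}{S}+\tfrac{(\beta_1-\beta_2)S\theta A}{E}+\tfrac{(1-p)\sigma E}{A}\geq 3\sqrt[3]{\alpha\Lambda(\beta_1-\beta_2)\theta(1-p)\sigma},
\]
and analogously for the $(1-\alpha),I$ branch, whose three factors multiply to $(1-\alpha)\Lambda(\beta_1-\beta_2)p\sigma$. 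Summing the two lower bounds produces precisely $\rho_1(\alpha)$, while all remaining constants assemble into $-\rho_2$. Hence $\mathrm{d}V\geq\bigl(\rho_1(\alpha)-\rho_2-\beta_1(I+A)\bigr)\mathrm{d}t+\sum_i\sigma_i\,\mathrm{d}B_i$ for every $\alpha\in(0,1)$; choosing $\alpha=\widehat\alpha$ and invoking Lemma~\ref{lass} makes the constant term as large as possible.

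To finish, I would integrate this differential inequality from $0$ to $t$, divide by $t$, and rearrange to get $\beta_1\langle I(t)+A(t)\rangle\geq\rho_1(\widehat\alpha)-\rho_2-\tfrac{V(t)-V(0)}{t}+\tfrac1t\sum_i\sigma_iB_i(t)$. Taking $\liminf_{t\to\infty}$, the martingale term vanishes a.s. since $B_i(t)/t\to0$, and Lemma~\ref{lemme1}\ref{lemme1a} forces $\limsup_{t\to\infty}\tfrac{\ln X_i(t)}{t}\leq0$ for each $i$, hence $\limsup_{t\to\infty}\tfrac{V(t)}{t}\leq0$. This leaves $\liminf_{t\to\infty}\langle I(t)+A(t)\rangle\geq\tfrac1{\beta_1}(\rho_1(\widehat\alpha)-\rho_2)$, strictly positive under the hypothesis $\rho_1(\widehat\alpha)>\rho_2$. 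The part I expect to be delicate is the bookkeeping in the drift estimate: one must verify that after the two AM--GM applications the leftover constants recombine into exactly $\rho_2$ (in particular that the $-q-\lambda$ pair collapses to $-|\lambda-q|$ and that all seven $-\mu$'s and all seven $\tfrac12\sigma_i^2$'s are accounted for), and that the splitting of $\Lambda/S$ and of the incidence term is the one whose pathway products are optimized — this is where Lemma~\ref{lass} and the specific value $\widehat\alpha$ enter.
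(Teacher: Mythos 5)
Your proposal is correct and follows essentially the same route as the paper: the logarithmic functional $\widehat V=\sum_i\ln X_i$, the drop of the $H$ and $R$ inflow fractions, the reduction of the $\lambda,q$ exchange to $-|\lambda-q|$, the $\alpha$-splitting of $\Lambda/S$ with the three-term AM--GM along the two infection pathways yielding $\rho_1(\alpha)$, optimization at $\widehat\alpha$ via Lemma~\ref{lass}, and the final passage to the limit using $B_i(t)/t\to0$ together with Lemma~\ref{lemme1}\ref{lemme1a} and $\ln y\leqslant y$. The only cosmetic differences (using $\theta A\leqslant A$ inside the drift rather than at the end via $\langle I+A\rangle\geqslant\langle I+\theta A\rangle$, and bounding $\lambda Q/S+qS/Q$ by $2\sqrt{\lambda q}$ instead of $2(\lambda\wedge q)$) are equivalent and do not change the argument.
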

\begin{proof}
Consider the function 
$$\begin{array}{crcl}
\widehat{V}:&\mathbb{R}^7_{+}&\longrightarrow &\mathbb{R}\\ 
 &x&\longmapsto&\sum\limits_{i=1}^7 \ln\left(x_i\right).
\end{array}$$ 
From It\^{o}'s formula and system \eqref{systo}, we have
\begin{align*}
\text{d}\widehat{V}(X(t))&=\Bigg(\left[\dfrac{\Lambda}{S}-\left(\beta_1-\beta_2\dfrac{I}{b+I}\right)\left(I+\theta A\right)+\lambda \dfrac{Q}{S}-(q+\mu)\right]+\left[q\dfrac{S}{Q}-\left(\lambda+\mu\right)\right]\\
&\quad+\left[\left(\beta_1-\beta_2\dfrac{I}{b+I}\right) \dfrac{S}{E}\left(I+\theta A\right)-(\mu+\sigma)\right]+\left[\left(1-p\right)\sigma \dfrac{E}{A}-\left(\mu+\varepsilon_A+\gamma_A+d_A\right)\right]\\
&\left.\quad+\left[\sigma p \dfrac{E}{I}-\left(\mu+\varepsilon_I+\gamma_I+d_I\right)\right]+\left[\varepsilon_I \dfrac{I}{H}+\varepsilon_A \dfrac{A}{H}-\left(d_H+\gamma_H+\mu \right)\right]\right.\\
&\quad+\left[\gamma_H \dfrac{H}{R}+\gamma_I \dfrac{I}{R}+\gamma_A\dfrac{A}{R} -\mu \right]-\dfrac{1}{2}\sum_{i=1}^7\sigma_i^2\Bigg)~\text{d}t+\sum_{i=1}^7 \sigma_i~\text{d}B_i(t)\\
&\geqslant \Bigg( \dfrac{\Lambda}{S}-\beta_1\left(I+\theta A\right)+\left(\lambda\wedge q\right)\left(\dfrac{S}{Q}+\dfrac{Q}{S}\right)+\left(\beta_1-\beta_2\right)\dfrac{S}{E}\left(I+\theta A\right)+\left(1-p\right)\sigma \dfrac{E}{A}+\sigma p \dfrac{E}{I}\\
&\quad-\left[7\mu+\lambda+q+\sigma+\left(\varepsilon_A+\gamma_A+d_A\right)+\left(\varepsilon_I+\gamma_I+d_I\right)+\left(d_H+\gamma_H\right)+\dfrac{1}{2}\sum\limits_{i=1}^7\sigma_i^2\right]\Bigg)\text{d}t\\
&\quad+\sum_{i=1}^7 \sigma_i~\text{d}B_i(t).
\end{align*}
 Noticing that $\lambda\wedge q=\dfrac{\lambda+q-\left|\lambda-q\right|}{2}$ and $\left(\dfrac{S}{Q}+\dfrac{Q}{S}\right)\geqslant 2$, we get for all $t\geqslant 0$
 \begin{align*}
\text{d}\widehat{V}(X(t))\hspace{-1pt}&\geqslant\hspace{-0.7pt} \Bigg(\hspace{-1pt}\left[\dfrac{(1-\widehat{\alpha})\Lambda}{S}+\left(\beta_1-\beta_2\right)\dfrac{SI}{E}+\sigma p \dfrac{E}{I}\right]\hspace{-2pt}+\hspace{-2pt}\left[\dfrac{\widehat{\alpha}\Lambda}{S}+\theta\left(\beta_1-\beta_2\right)\dfrac{SA}{E}+\left(1-p\right)\sigma \dfrac{E}{A} \right]\hspace{-2pt}\\
&\quad-\beta_1\left(I+\theta A\right)-\hspace{-1pt}\rho_2\hspace{-1pt}\Bigg)\text{d}t
+\sum_{i=1}^7 \sigma_i~\text{d}B_i(t),
\end{align*}
and from the relation between arithmetic and geometric means \big(the first is greater than or equal to the second, see \cite{nicholson2014concise}\big), it results that
\begin{align}\label{z}
\text{d}\widehat{V}(X(t))&\geqslant \left(3\hspace{-2pt}\times\hspace{-2pt}\sqrt[3]{(1\hspace{-1pt}-\hspace{-1pt}\widehat{\alpha})\Lambda\left(\beta_1\hspace{-1pt}-\hspace{-1pt}\beta_2\right)\sigma p}+3\hspace{-2pt}\times\hspace{-2pt}\sqrt[3]{\widehat{\alpha}\Lambda\left(\beta_1\hspace{-1pt}-\hspace{-1pt}\beta_2\right)\theta\sigma(1\hspace{-1pt}-\hspace{-1pt}p)}\hspace{-1pt}-\hspace{-1pt}\beta_1\left(I\hspace{-1pt}+\hspace{-1pt}\theta A\right)\hspace{-1pt}-\hspace{-1pt}\rho_2\right)\text{d}t+\hspace{-2pt}\sum_{i=1}^7 \sigma_i~\text{d}B_i(t)\nonumber\\
&\geqslant \big(\left(\rho_1(\widehat{\alpha})-\rho_2\right)-\beta_1\left(I\hspace{-1pt}+\hspace{-1pt}\theta A\right)\big)\,\text{d}t+\hspace{-2pt}\sum_{i=1}^7 \sigma_i~\text{d}B_i(t).
\end{align}
Integrating from $0$ to $t$ and dividing by $t$ on both sides of \eqref{z} gives
\begin{align}
&\dfrac{\widehat{V}(X(t))-\widehat{V}(X(0))}{t}\geqslant \left(\rho_1(\widehat{\alpha})-\rho_2\right)-\beta_1 \langle I(t)+\theta A(t) \rangle +\sum_{i=1}^7 \sigma_i \dfrac{B_i(t)}{t}.\nonumber
\end{align}
Hence
\begin{align}\label{a}
\langle I(t)+A(t) \rangle\geqslant \langle I(t)+\theta A(t) \rangle\geqslant\dfrac{1}{\beta_1}\left(\dfrac{\widehat{V}(X(0))-\widehat{V}(X(t))}{t}+\left(\rho_1(\widehat{\alpha})-\rho_2\right)\right)+\sum_{i=1}^7 \dfrac{\sigma_i}{\beta_1}\ \dfrac{B_i(t)}{t}.
\end{align}
Since $\ln(y)\leqslant y-1\leqslant y$ for all $y>0$, one can assert that $\widehat{V}(x)\leqslant \sum\limits_{i=1}^7x_i$ for any $x\in\mathbb{R}_+^7$.\\
Combining the last inequality with \eqref{a} yields
\begin{equation}
\langle I(t)+A(t) \rangle\geqslant\dfrac{1}{\beta_1}\left(\dfrac{\widehat{V}(X(0))}{t}-\dfrac{1}{t}\sum_{i=1}^7X_i(t)+\left(\rho_1(\widehat{\alpha})-\rho_2\right)\right)+\sum_{i=1}^7\dfrac{\sigma_i}{\beta_1}\ \dfrac{B_i(t)}{t}\nonumber.
\end{equation}
By using the strong law of large numbers for local martingales and the first assertion of Lemma \ref{lemme1}, we obtain
$$\liminf\limits_{t\to \infty} \langle I(t)+A(t)\rangle\geqslant\dfrac{1}{\beta_1}\left(\rho_1(\widehat{\alpha})-\rho_2\right)>0\quad \text{a.s.,}$$
which is the required assertion.
\end{proof}
\begin{rema}
In the last proof, we can notice that any constant $\alpha\in(0,1)$ can play the role of $\widehat{\alpha}$, but the peculiarity of the latter lies essentially in its capacity to weaken the hypothesis of Theorem \ref{las}. Indeed, according to Lemma \ref{lass}, if $\rho_1(\alpha)>\rho_2$ for some $\alpha\in(0,1)$ then necessarily $\rho_1(\widehat{\alpha})>\rho_2$.
\end{rema}
\section{Numerical simulation examples}\label{sec3}
In this section, and using the parameter values  as shown in Table \ref{tab}, we present some numerical simulations to validate the various results proved in this paper. Most of the parametric values appearing in this table (Table \ref{tab}) are selected from real data available in existing literature (\cite{jia2020modeling,tang2020estimation,wu2020quantifying,PHO} more precisely) and the rest of them are just assumed for numerical calculations. The solution of our COVID-19 model, in its both stochastic and deterministic forms, is simulated in our case with the initial state given by $S(0)=1.8\times 10^6,~ Q(0)=0,~ E(0)=10,~ A(0)=15,~ I(0)=8,~ H(0)=5$ and $R(0)=0$ (see \cite{tang2020estimation}). In what follows, the unity of time is one day and the number of individuals is expressed in one million population. 
\vspace*{1cm}
\begin{table}[H]
\begin{center}
\begin{tabular}{c|l|c}
\hline \hline
Parameter &  Description & Nominal value  
 \\ 
\hline 
$\Lambda$ & Recruitment rate & $108.63$ \\ 
$\beta_1$ & Contact rate in absence of media coverage & $\left(1.7  \times 10^{-9}\hspace{1pt}\textbf{,}\hspace{2pt}5.2 \hspace{-1.5pt}\times 10^{-3}\right)$ \\ 
$\beta_2$ & Awareness rate (or also response intensity) & $\left[0\hspace{1pt}\textbf{,}\hspace{1pt}\beta_1\right]$ \\ 
$b$ & Constant of media's half saturation   & $70$ \\ 
$\theta$ & Modification ratio of asymptomatic infectiousness & $0.0494$ \\ 
$q$ &  Quarantine rate & $0.071$ \\ 
$\lambda$ & Rate of release from quarantine & $0.1003$ \\
$\mu$ & Natural death rate & $0.00029$ \\ 
$\sigma$ & The transition rate of exposed individuals to the infective classes & $0.2$ \\
$p$ & Probability of having symptoms among infected individuals & $\left(0\textbf{,}1\right)$ \\
$\varepsilon_A$ & The hospitalization rate of asymptomatic infected individuals &  $0.1$\\
$\gamma_A$ & Recovery rate of asymptomatic infected individuals & $0.15$ \\
$d_A$ & Disease-induced death rate for asymptomatic infected individuals & $0.005$ \\
$\varepsilon_I$ & The hospitalization rate of symptomatic infected individuals & $0.33$ \\
$\gamma_I$ & Recovery rate of symptomatic infected individuals & $0.1001$ \\ 
$d_I$ & Disease-induced death rate for symptomatic infected individuals & $0.008$ \\
$\gamma_H$ & Recovery rate of hospitalized individuals & $0.14$ \\ 
$d_H$ & Disease-induced death rate for hospitalized individuals & $0.004$ \\
\hline \hline 
\end{tabular}
\end{center}
\caption{Definitions and values (per day) of  COVID-19 model parameters used in the simulation.}\label{tab}
\end{table}
\pagebreak

\begin{ex}[\textbf{Deterministic case}]\label{axample1}
In this case, and adopting the parameter values listed in Table \ref{tab}, we will illustrate the theoretical results of the first section. Figures \ref{Fig1} and \ref{Fig2} present the dynamical behaviour of the COVID-19 deterministic model  when $\beta_1$, $\beta_2$ and $p$ are conveniently fixed in their admissible ranges. In Figure \ref{Fig1}, we take $\beta_1=3.97\times 10^{-6}$, $\beta_2=0.6\hspace{-2pt}\times\hspace{-2pt}\beta_1$,  $p=0.6201$, and we get $\mathcal{R}_0=0.9180<1$. From the curves appearing in this figure, it is clear that the disease is dying out, and in addition to that, the solution $\big(S(t),Q(t),E(t),A(t),I(t),H(t),R(t) \big)$  converges to the free-disease state $\mathcal{E}^o=\left(1.5563\times 10^5,2.0896\times 10^5,0,0,0,0,0\right)$ which supports the Theorem \ref{theodfe}\textbf{.} On the other hand, and changing $\beta_1$ to $5\times 10^{-6}$, we obtain a basic reproductive number $\mathcal{R}_0$ greater than one ($\mathcal{R}_0=1.1562>1$). From Figure \ref{Fig2}, we observe the COVID-19 persistence in this case, which agree well with Theorem \ref{persist}.
\end{ex}
\vspace*{1cm}
\begin{ex}[\textbf{Stochastic case}]\label{axample2}
In order to exhibit the random fluctuations effect on COVID-19 dynamics, we present in Figures \ref{Fig3} and \ref{Fig4} a collection of numerical simulations. In the first instance, we take $\beta_1=2.08\times10^{-9}$, $\beta_2=0.6\hspace{-2pt}\times\hspace{-2pt}\beta_1$, $p=0.6201$, and we choose the stochastic  intensities as follows: $\sigma_1=0.024$, $\sigma_2=0.0235$, $\sigma_3=0.015$, $\sigma_4=0.0174$, $\sigma_5=0.019$, $\sigma_6=0.0213$, and $\sigma_7=0.0238$. Then, $$\frac{1}{2}\left(\sigma_1^2\vee \sigma_2^2 \vee \sigma_3^2\vee \sigma_4^2 \vee \sigma_5^2\vee \sigma_6^2 \vee \sigma_7^2\right)=0.000288<0.000290=\mu,$$ and $$\sigma_3^2\wedge\sigma_4^2\wedge \sigma_5^2=0.000225>0.000202=6\left(\beta_1S^o-\mu\right). $$
Hence, the assumptions of Theorem \ref{thm2} are verified, and consequently $$\limsup_{t\to \infty}\dfrac{\ln(E(t)+A(t)+I(t))}{t}\leqslant\beta_1S^o-\mu-\frac{\sigma_3^2\wedge\sigma_4^2\wedge \sigma_5^2}{6}=-3.83\times 10^{-6} <0 \quad \text{a.s.}$$
That is to say that the COVID-19 dies out exponentially almost surely. Moreover, by Corollary \ref{coro} and  Remark \ref{remmm}, the mean time of the solution  converges to the deterministic free-disease equilibrium $\mathcal{E}^o$. These two last results are confirmed by the curves depicted in Figure \ref{Fig3}. To make the condition $\rho_1(\widehat{\alpha})>\rho_2$ true, we take $\beta_1=4.1\times 10^{-3}$, $\beta_2=0.1\hspace{-2pt}\times\hspace{-2pt}\beta_1$ and we select new values of stochastic intensities as follows: $\sigma_1=0.019$, $\sigma_2=0.0185$, $\sigma_3=0.014$, $\sigma_4=0.017$, $\sigma_5=0.0158$, $\sigma_6=0.0136$, and $\sigma_7=0.0182$. Thus, the main result of Theorem \ref{las} is satisfied and this time, the COVID-19 persists  in the mean as  shown in Figure \ref{Fig4}.
\end{ex}
\vspace*{1cm}
\begin{ex}[\textbf{The effectiveness of media intervention and quarantine strategies}]
We aim during this example to examine numerically the impact of media intrusion and quarantine strategies on the COVID-19 spread. To this end, we simulate the progression of the total infected population number with various values of $\beta_2$,   $\lambda$ and $q$. Through Figure \ref{Fig5}, we can perceive that the increase of the quarantine rate and duration can delay the arrival of infection peak, reduce remarkably the impact of the disease, and even lead it to the extinction sometimes (see for example the last two curves presented in Figure \ref{Fig5}). On the other hand, and as it can be seen from figures \ref{Fig6} and \ref{Fig7}, the  media alert strategy is able also to diminish the  severity of the COVID-19 spread, but it can not make it disappear, and we explain this theoretically by  the absence of the  parameters $\beta_2$  and $b$ in the persistence and extinction conditions (for example $\mathcal{R}_0$ does not involve these parameters). Roughly speaking, the role of the quarantine and the information intervention about COVID-19 is critically important, particularly in its beginnings. The growth of the positive response in susceptible individuals leads to reduce the gravity of the infection and creates a conscious public able to overcome this new pandemic by respecting social distancing and self-isolation procedures.
\end{ex}
\pagebreak
\begin{figure}[H]
\centering
\subfigure{
    \includegraphics[width=.4\linewidth]{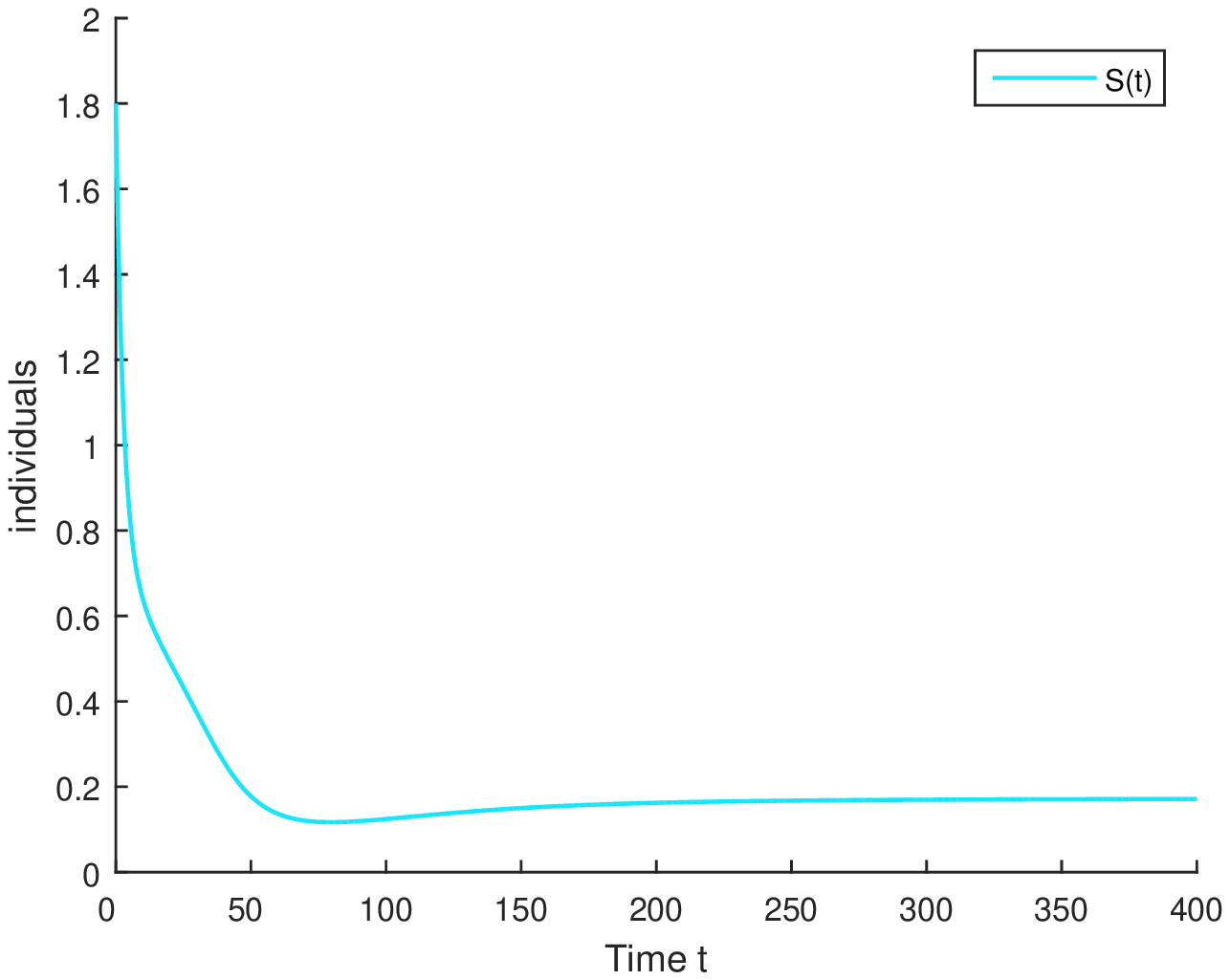}
  }%
\subfigure{
    \includegraphics[width=.4\linewidth]{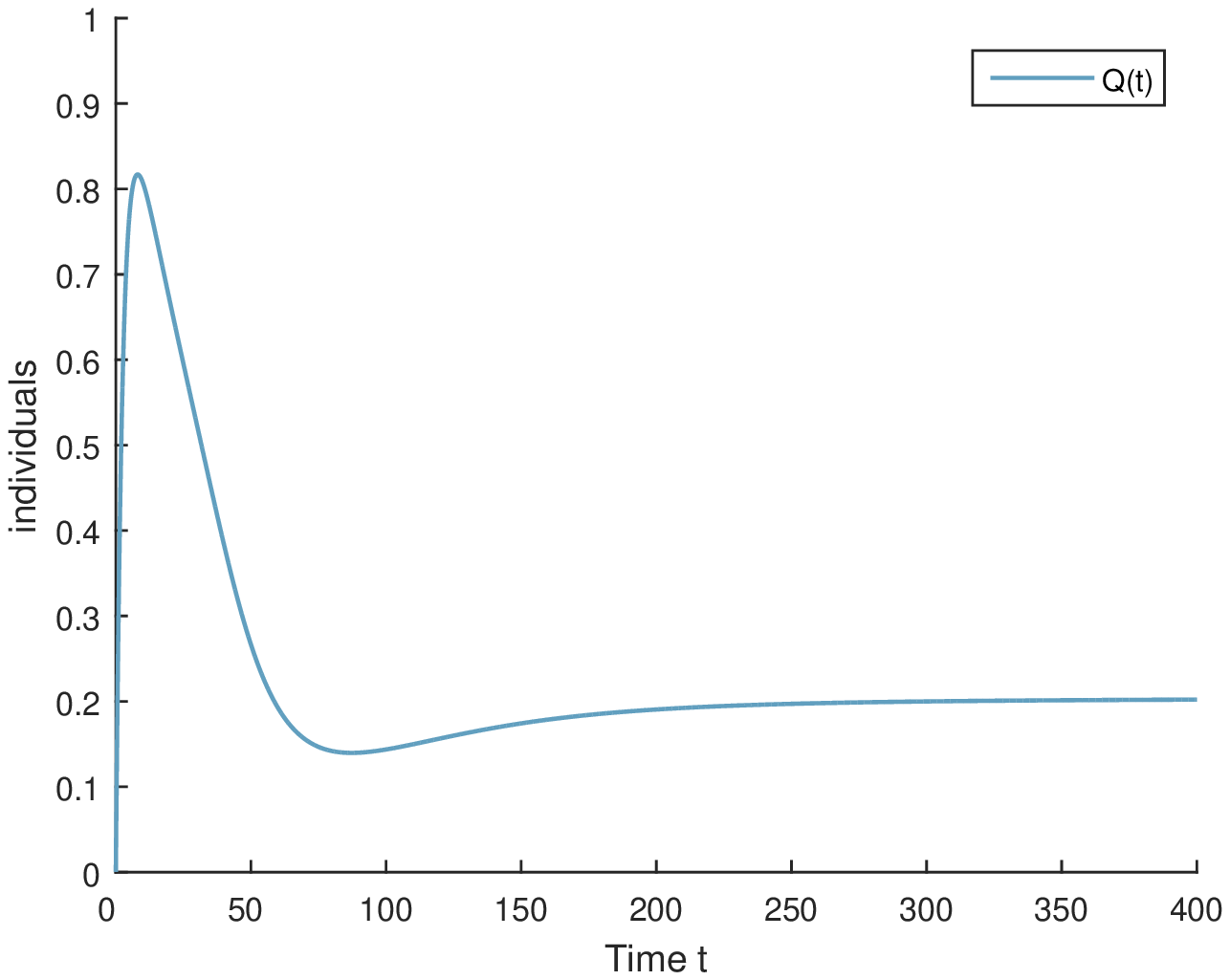}
  }\\[-2pt]
  \subfigure{
    \includegraphics[width=.4\linewidth]{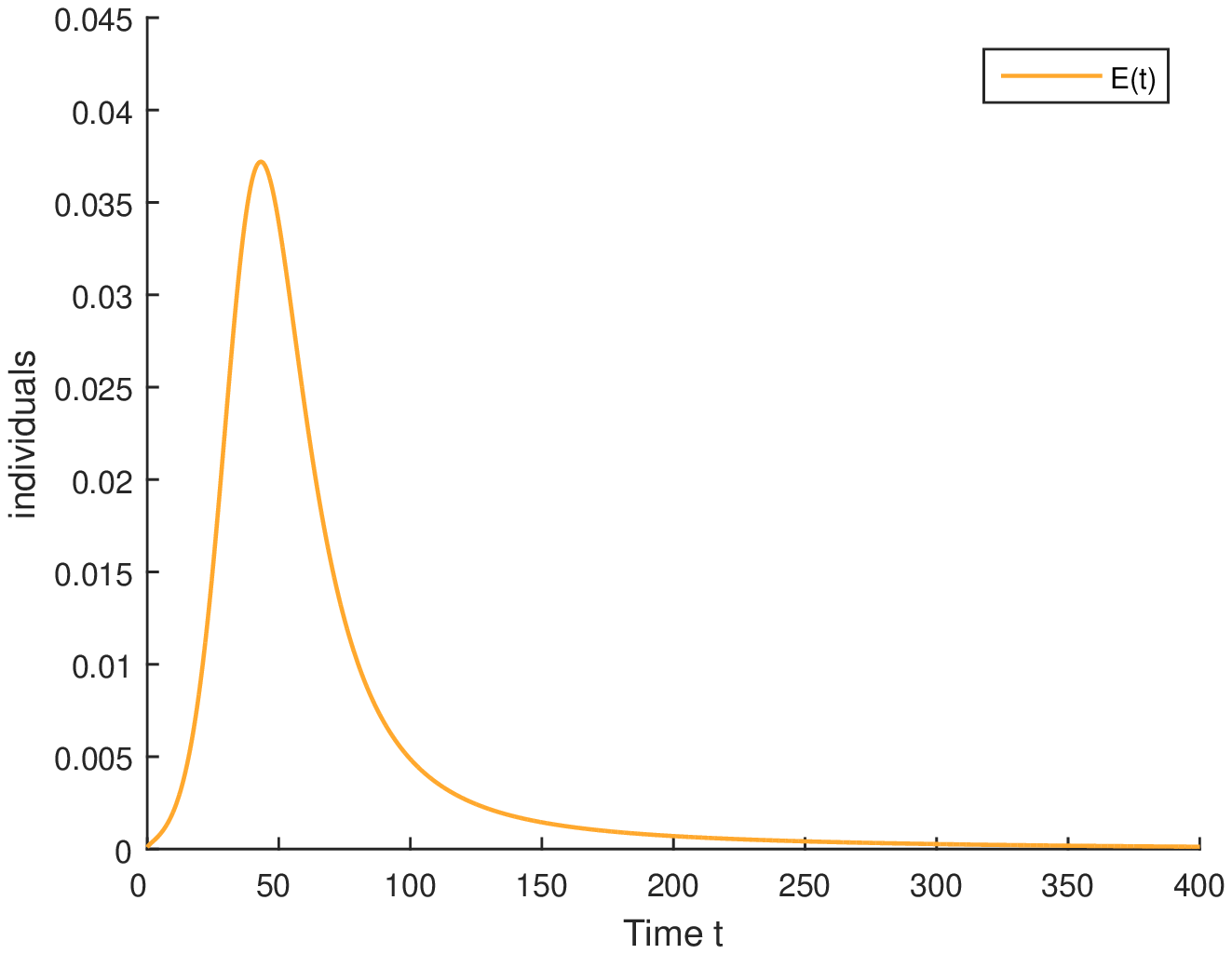}
  }%
\subfigure{
    \includegraphics[width=.4\linewidth]{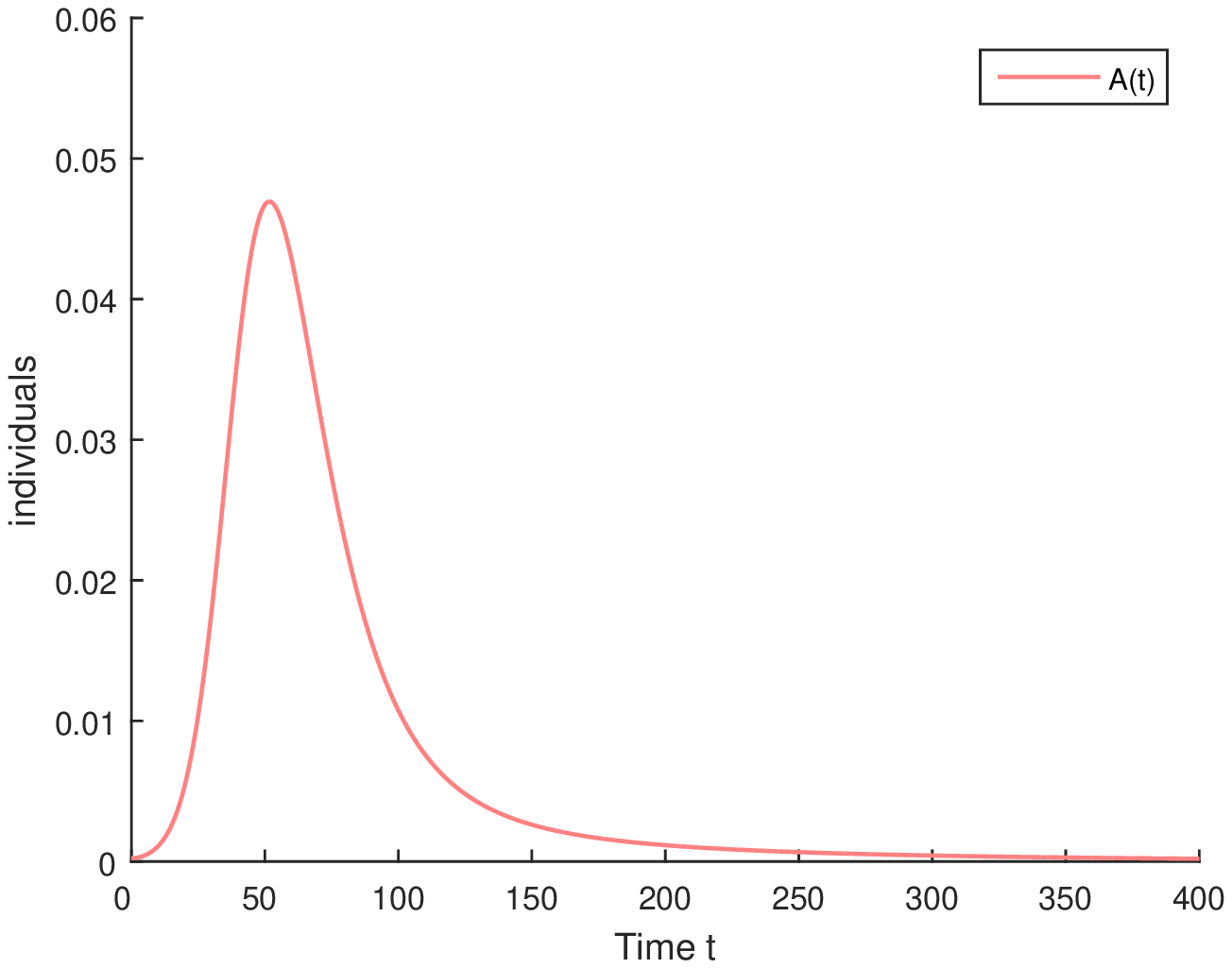}
  \centering
  }\\[-2pt]
    \subfigure{
    \includegraphics[width=.4\linewidth]{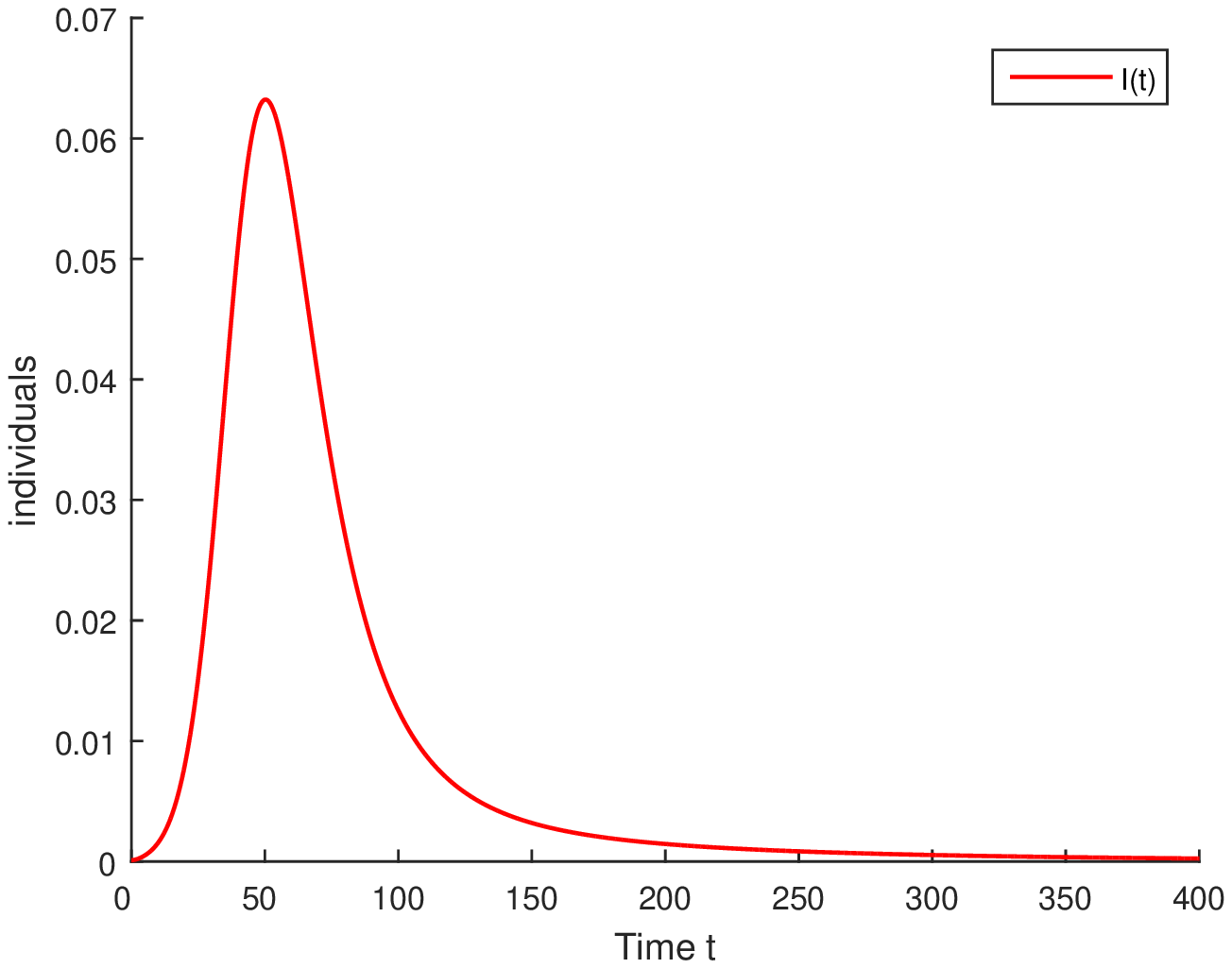}
  }%
\subfigure{
    \includegraphics[width=.4\linewidth]{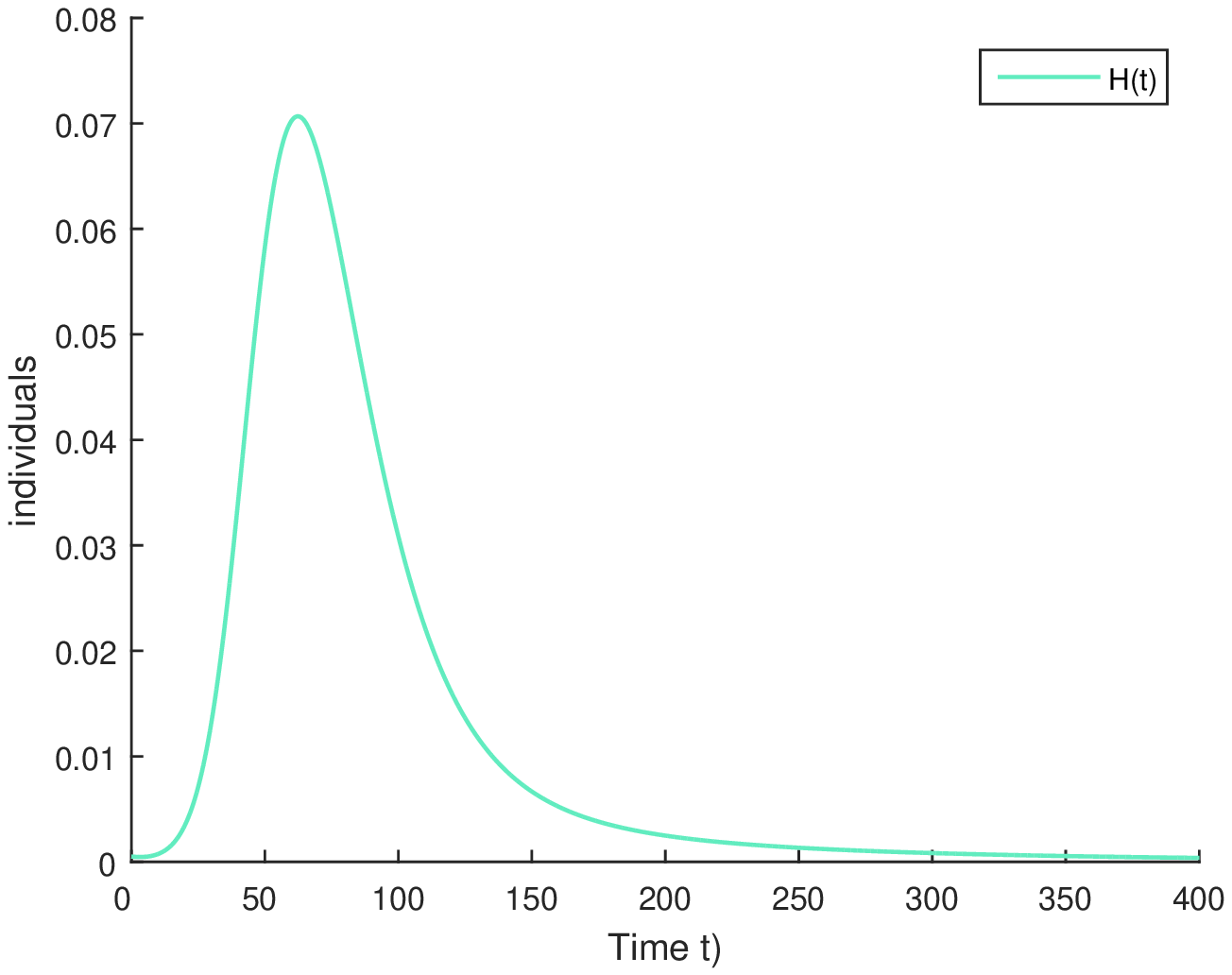}
  }\\[-2pt]
  \hspace*{2.8cm}\subfigure{
    \includegraphics[width=.4\linewidth]{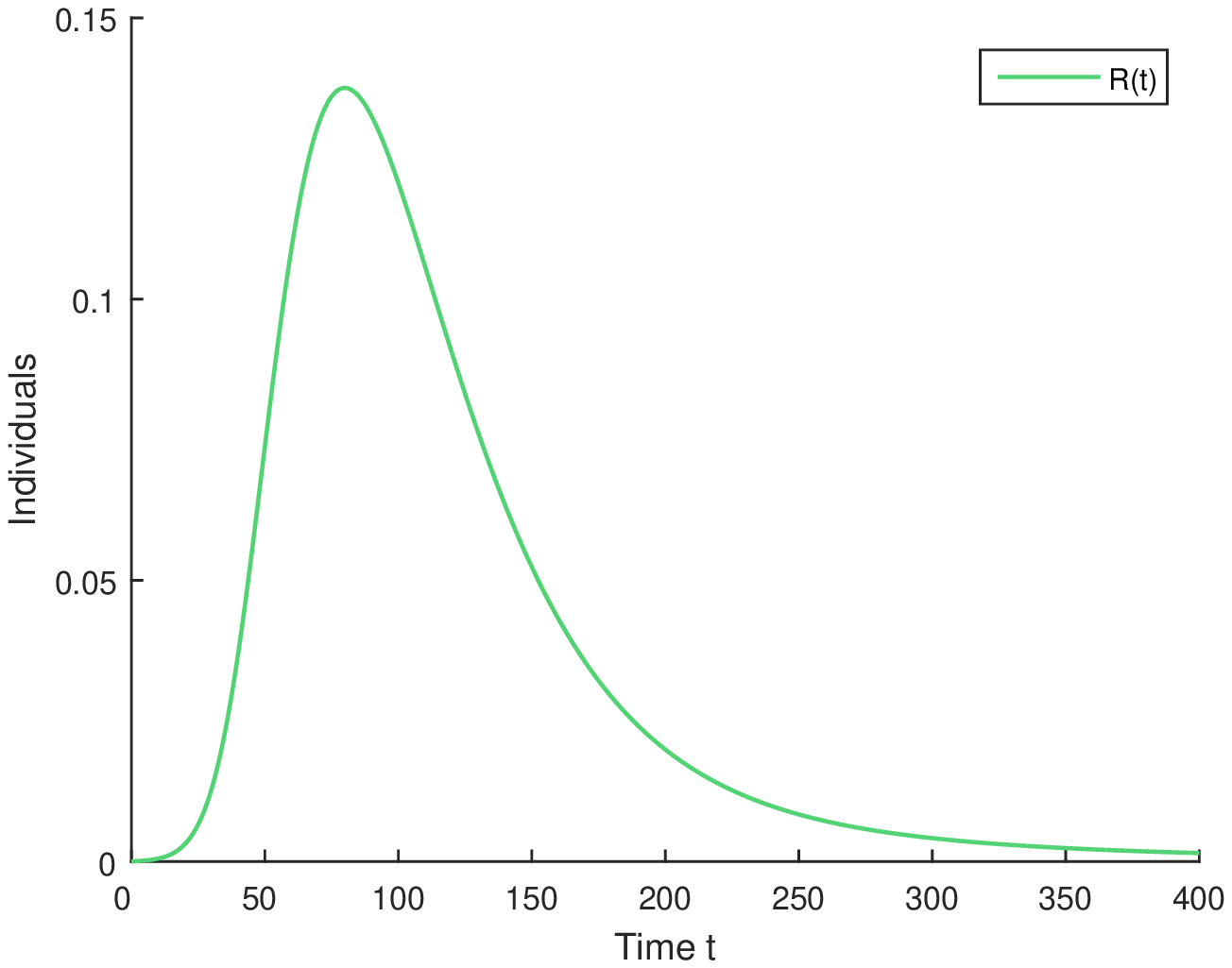}
  }\vspace*{-6pt}
 \caption{Solutions of COVID-19 deterministic model \eqref{detr} taking $\beta_1=3.97\times 10^{-6}$, $\beta_2=0.6\hspace{-2pt}\times\hspace{-2pt}\beta_1$  and $p=0.6201$ ($\mathcal{R}_0=0.9180<1$).}\label{Fig1}
\end{figure}
\vspace*{-1cm}
\begin{figure}[H]
\centering
\subfigure{
    \includegraphics[width=.4\linewidth]{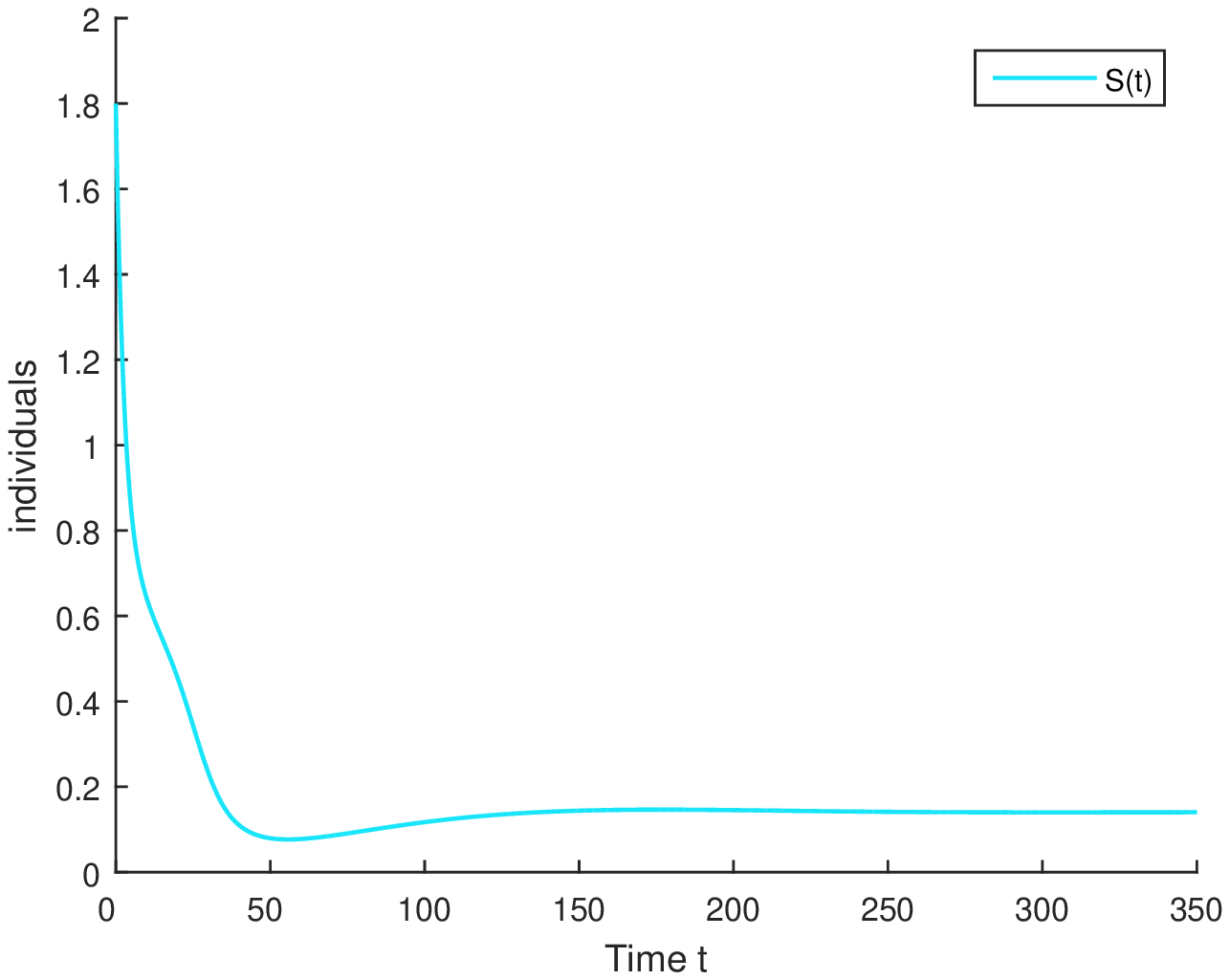}
  }%
\subfigure{
    \includegraphics[width=.4\linewidth]{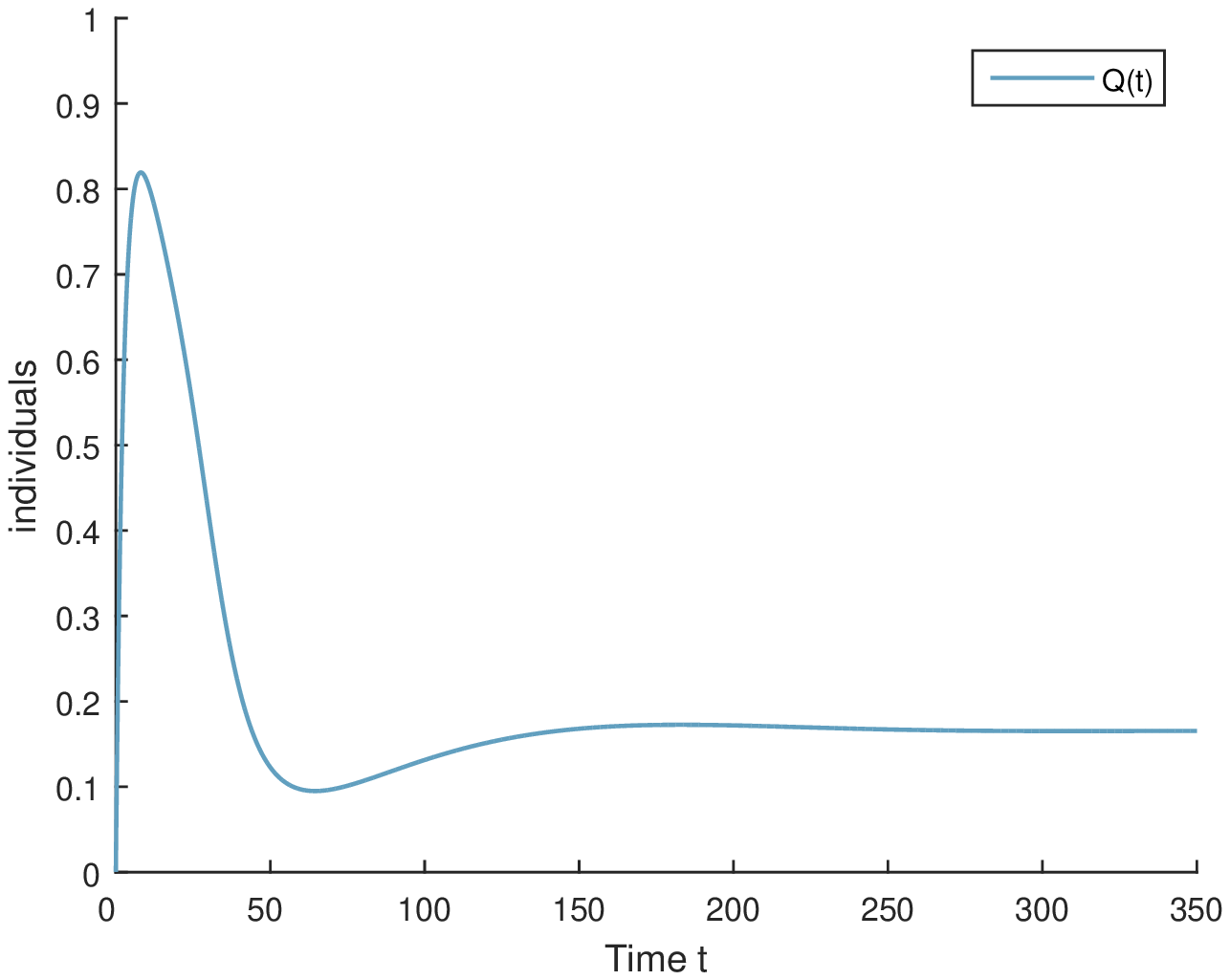}
  }\\[-2pt]
  \subfigure{
    \includegraphics[width=.4\linewidth]{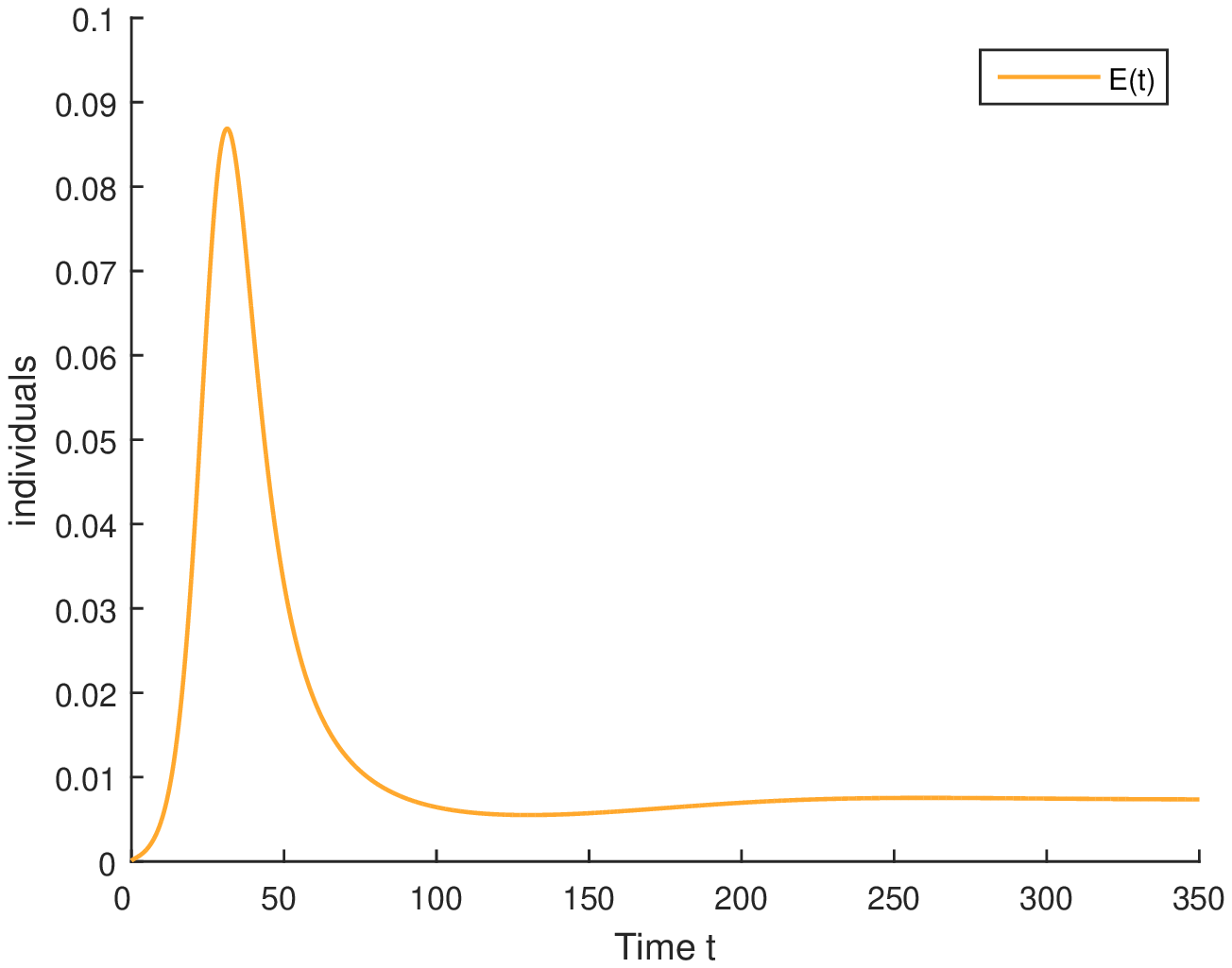}
  }%
\subfigure{
    \includegraphics[width=.4\linewidth]{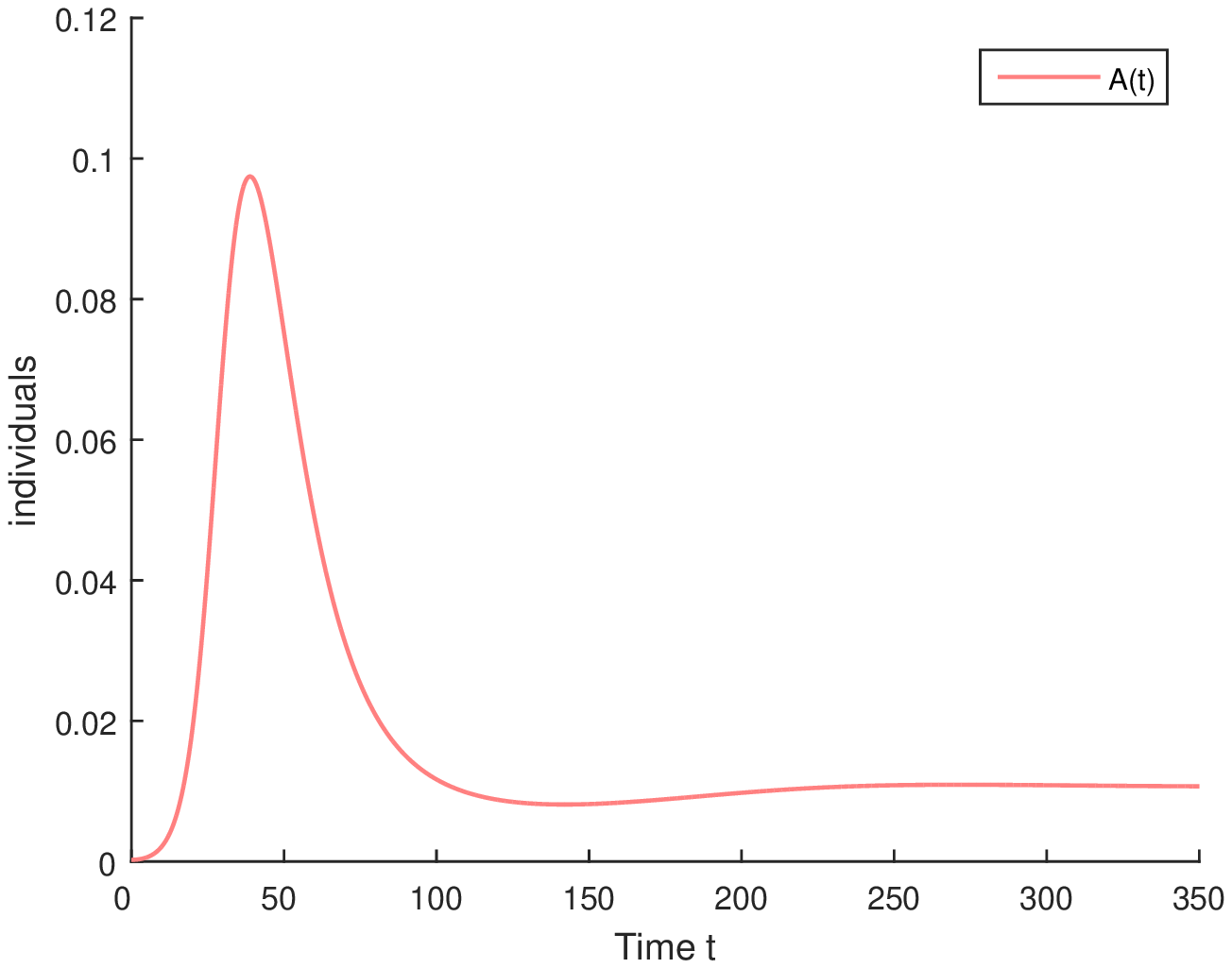}
  \centering
  }\\[-2pt]
    \subfigure{
    \includegraphics[width=.4\linewidth]{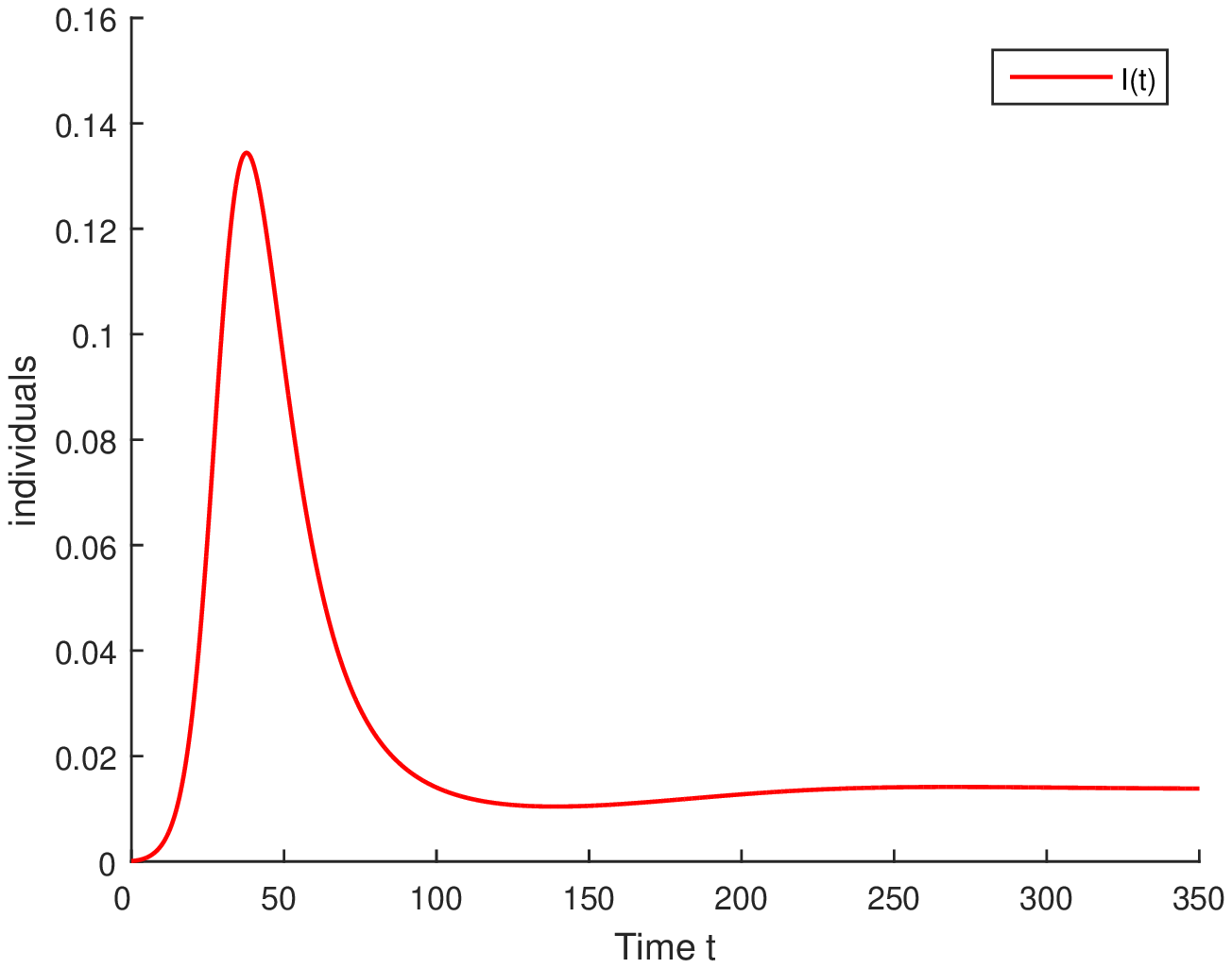}
  }%
\subfigure{
    \includegraphics[width=.4\linewidth]{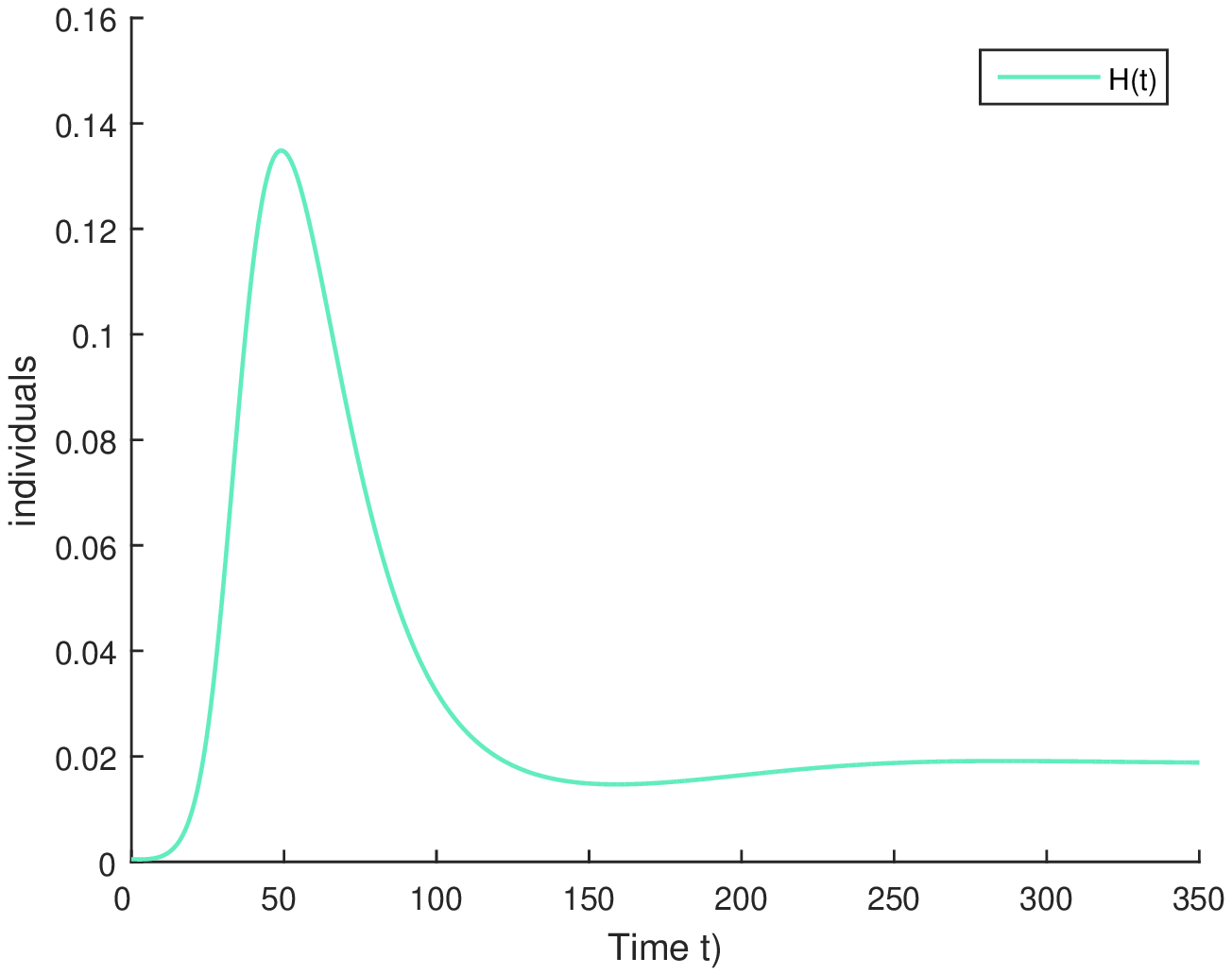}
  }\\[-2pt]
  \hspace*{2.8cm}\subfigure{
    \includegraphics[width=.4\linewidth]{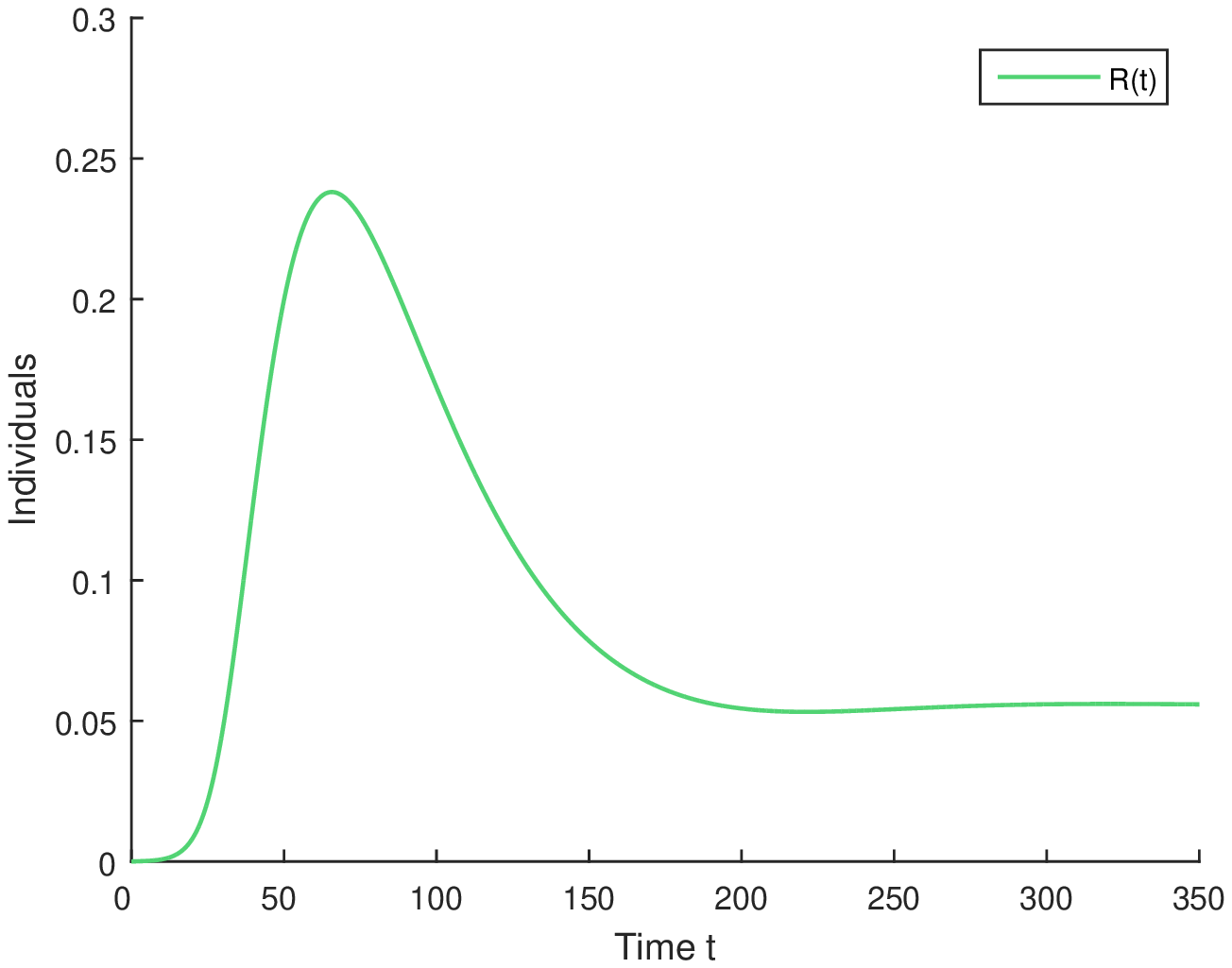}
  }\vspace*{-6pt}
 \caption{Trajectories of COVID-19 deterministic model \eqref{detr} taking $\beta_1=5\times 10^{-6}$, $\beta_2=0.6\hspace{-2pt}\times\hspace{-2pt}\beta_1$ and $p=0.6201$ ($\mathcal{R}_0=1.1562>1$).}\label{Fig2}
\end{figure}
\begin{figure}[H]
\centering
\subfigure{
    \includegraphics[width=.4\linewidth]{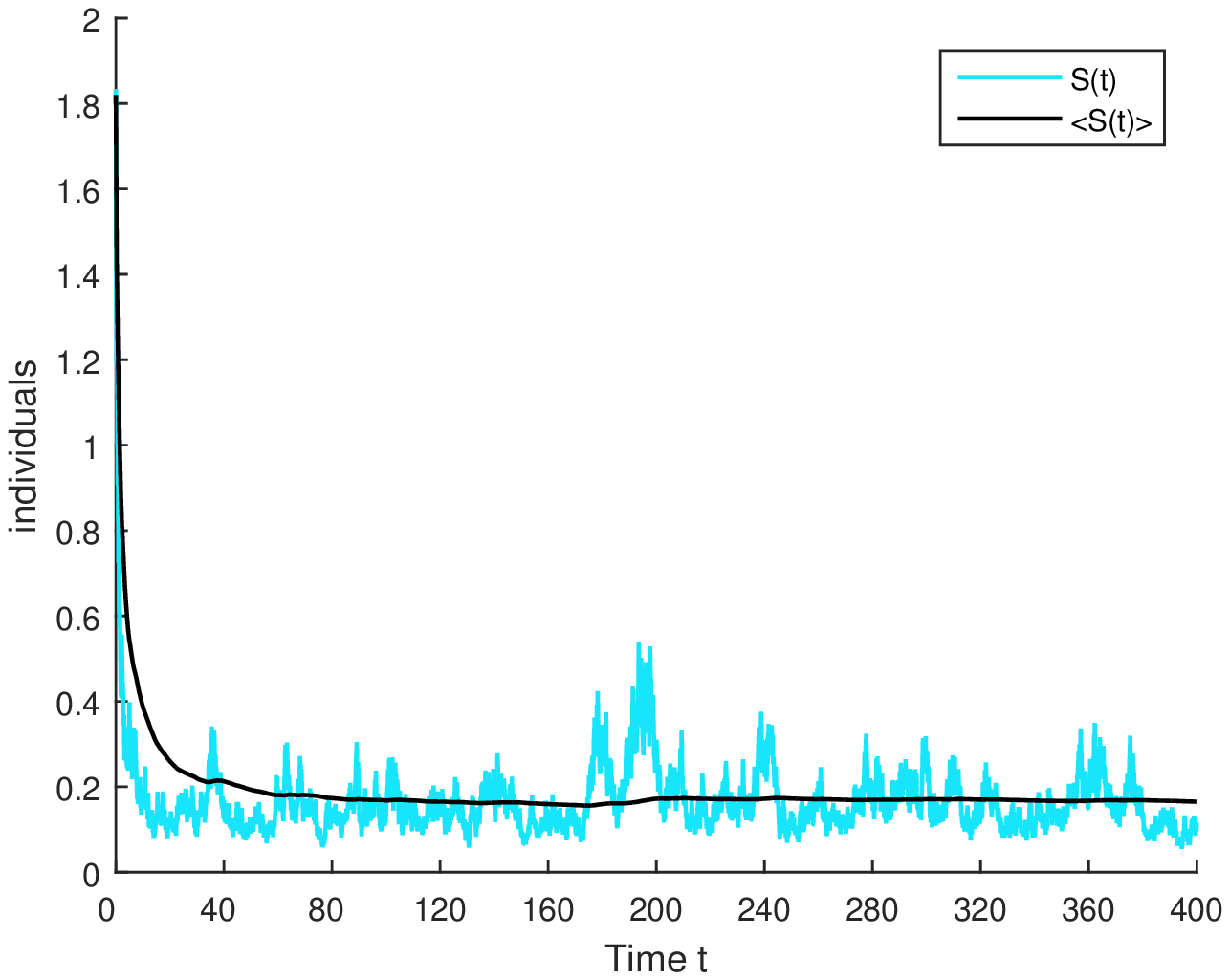}
  }%
\subfigure{
    \includegraphics[width=.4\linewidth]{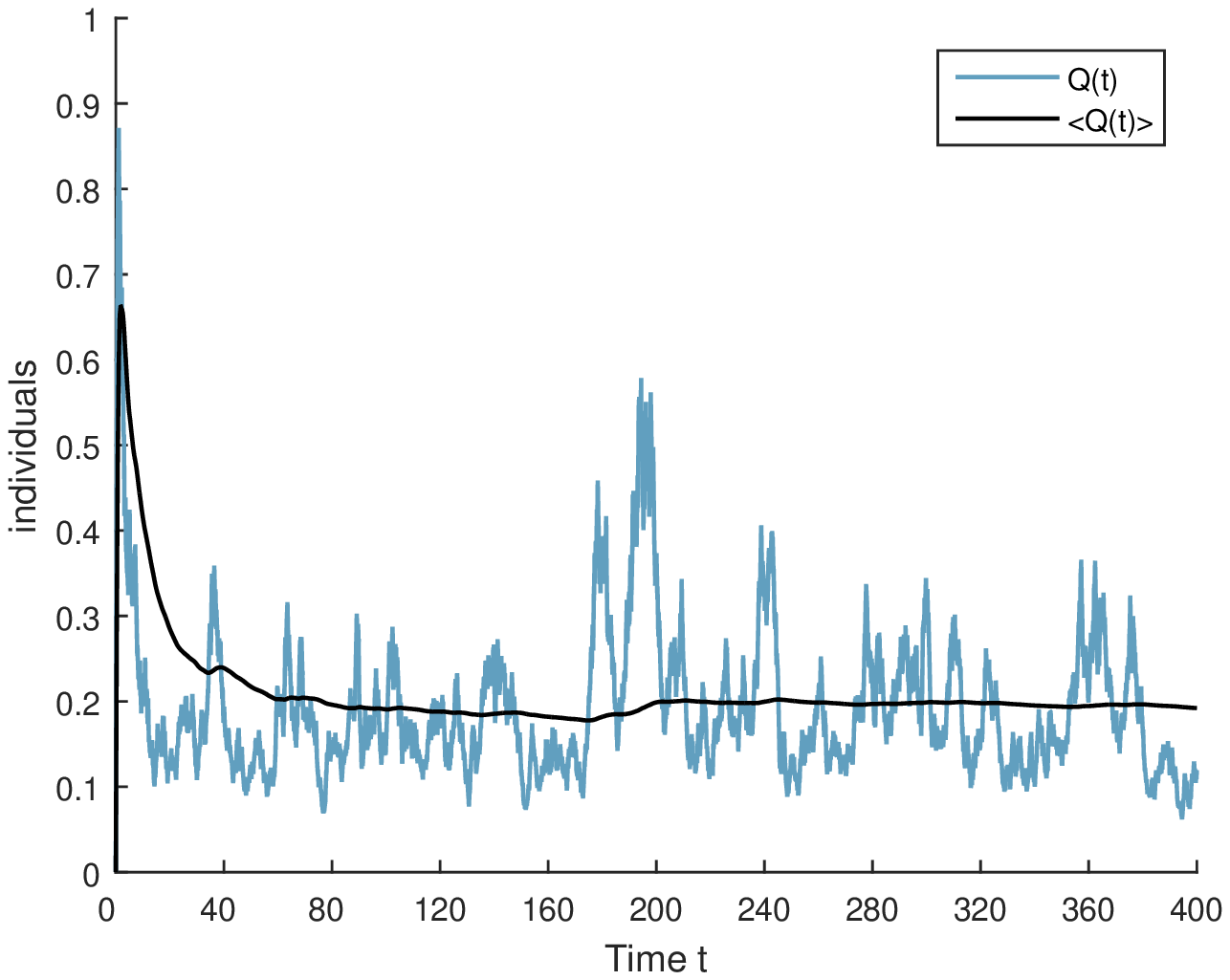}
  }\\[-2pt]
  \subfigure{
    \includegraphics[width=.4\linewidth]{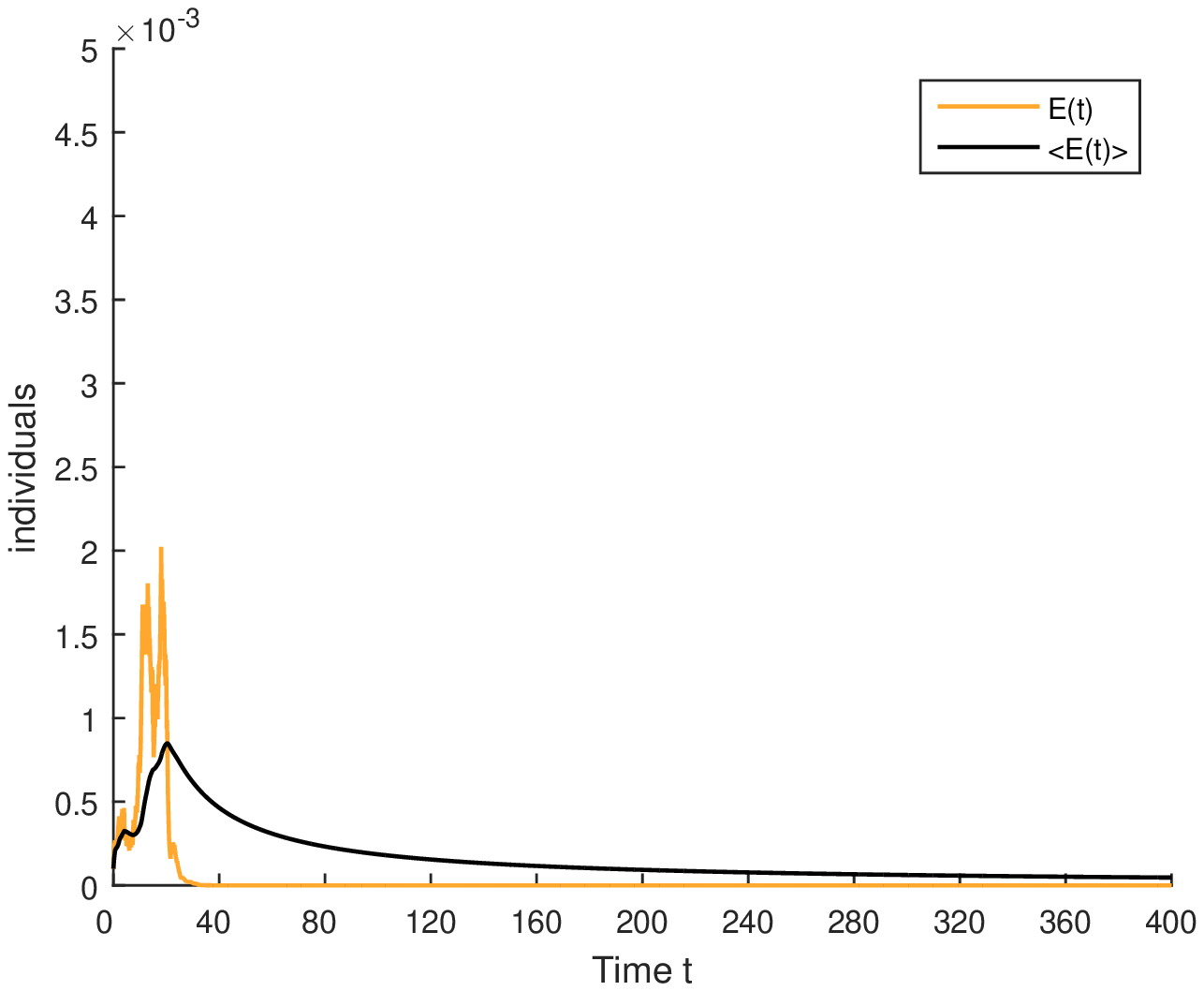}
  }%
\subfigure{
    \includegraphics[width=.4\linewidth]{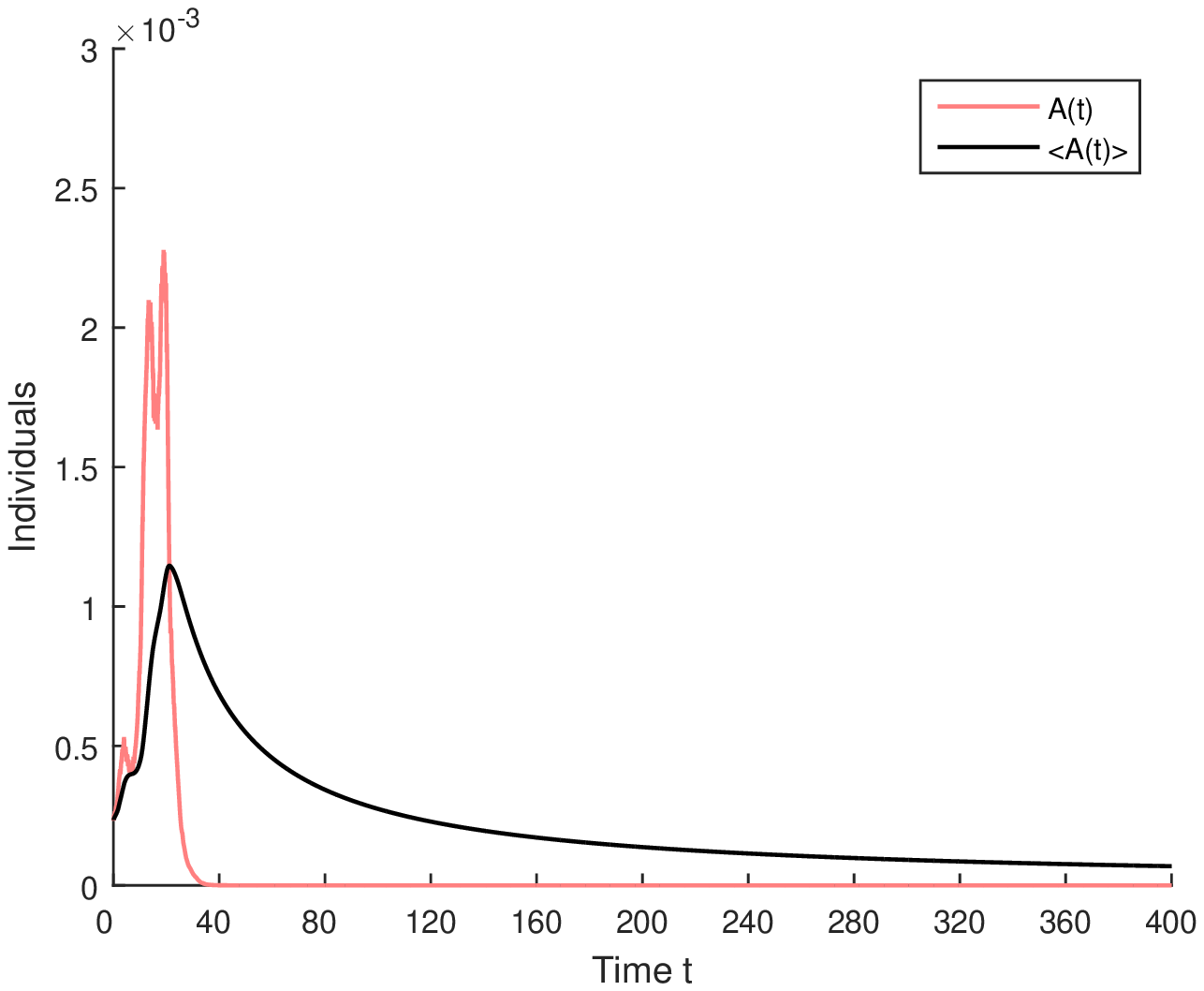}
  \centering
  }\\[-2pt]
    \subfigure{
    \includegraphics[width=.4\linewidth]{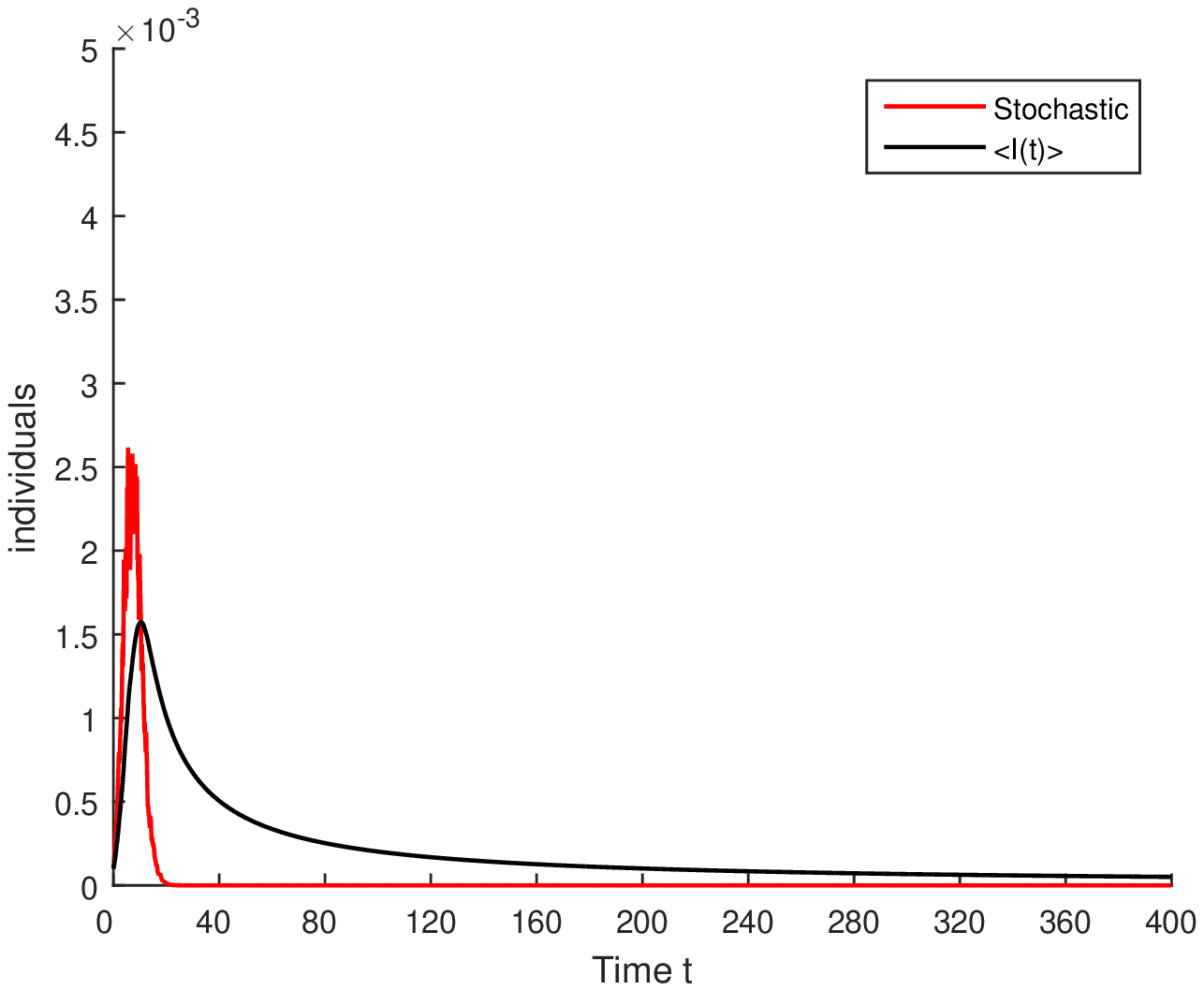}
  }%
\subfigure{
    \includegraphics[width=.4\linewidth]{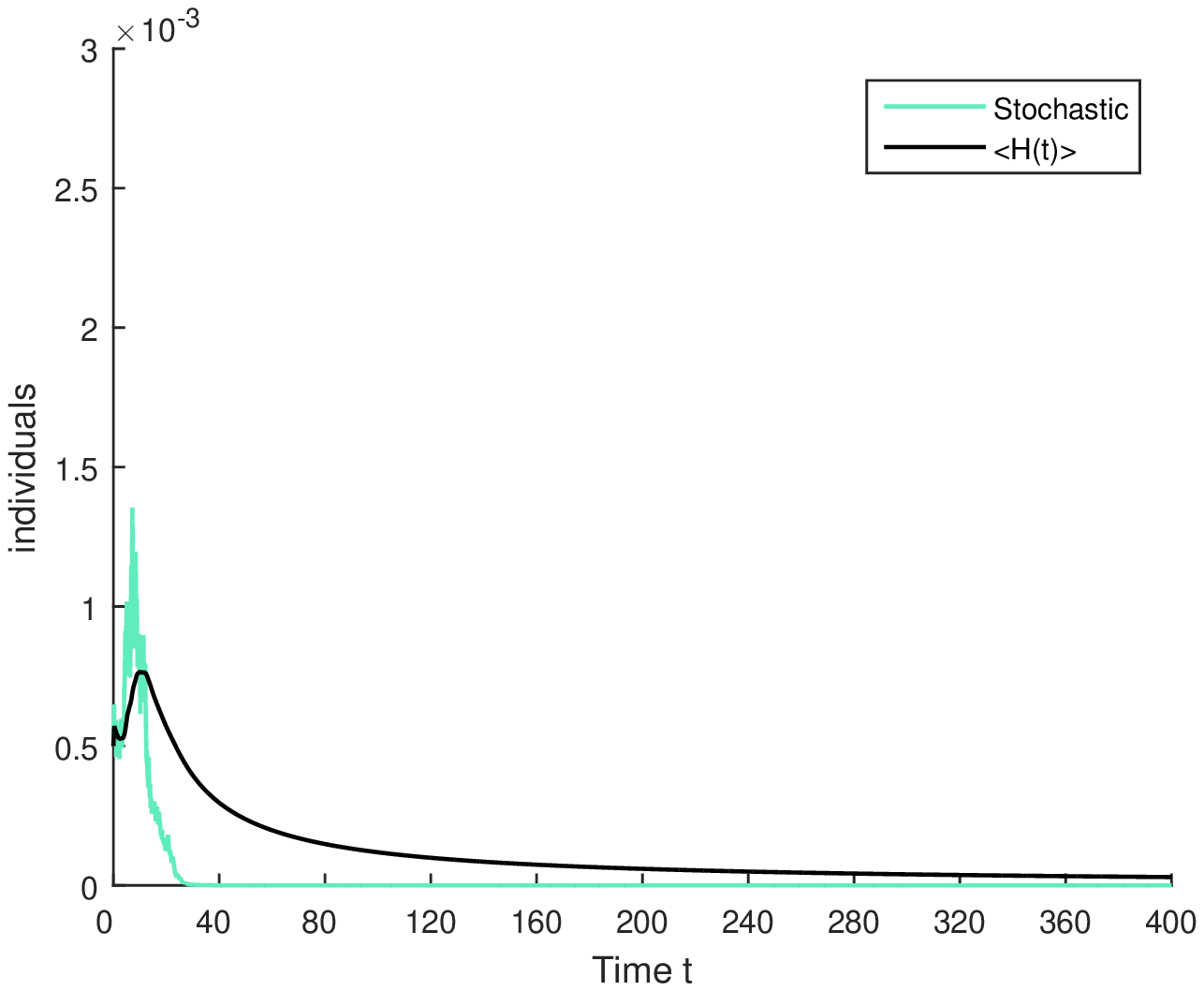}
  }\\[-2pt]
  \hspace*{2.8cm}\subfigure{
    \includegraphics[width=.4\linewidth]{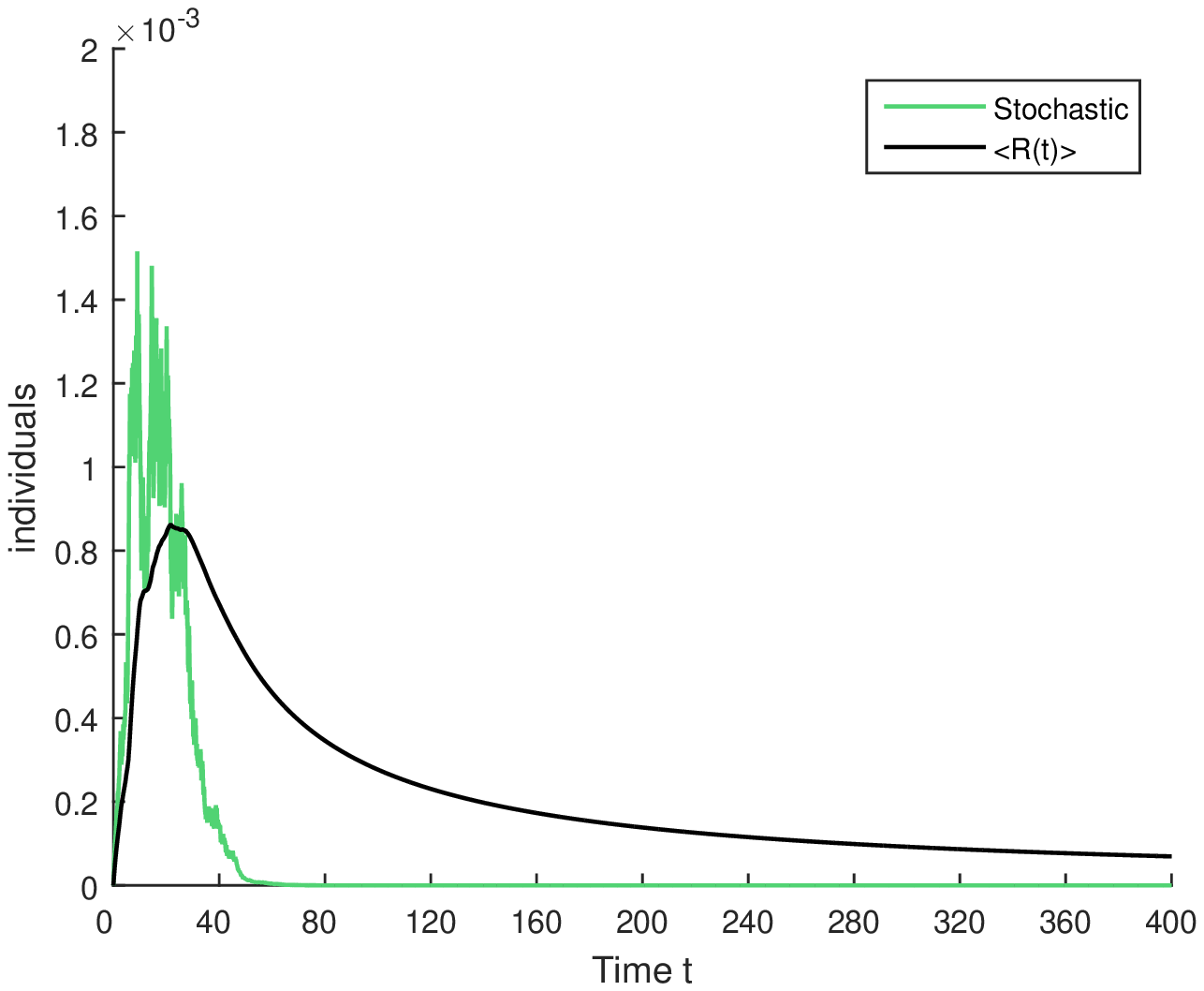}
  }\vspace*{-6pt}
 \caption{Trajectories of COVID-19 stochastic model \eqref{systo} taking $\beta_1=2.08\times 10^{-9}$, $\beta_2=0.6\hspace{-2pt}\times\hspace{-2pt}\beta_1$ and $p=0.6201$.}
 \label{Fig3}
\end{figure}
\begin{figure}[H]
\centering
\subfigure{
    \includegraphics[width=.4\linewidth]{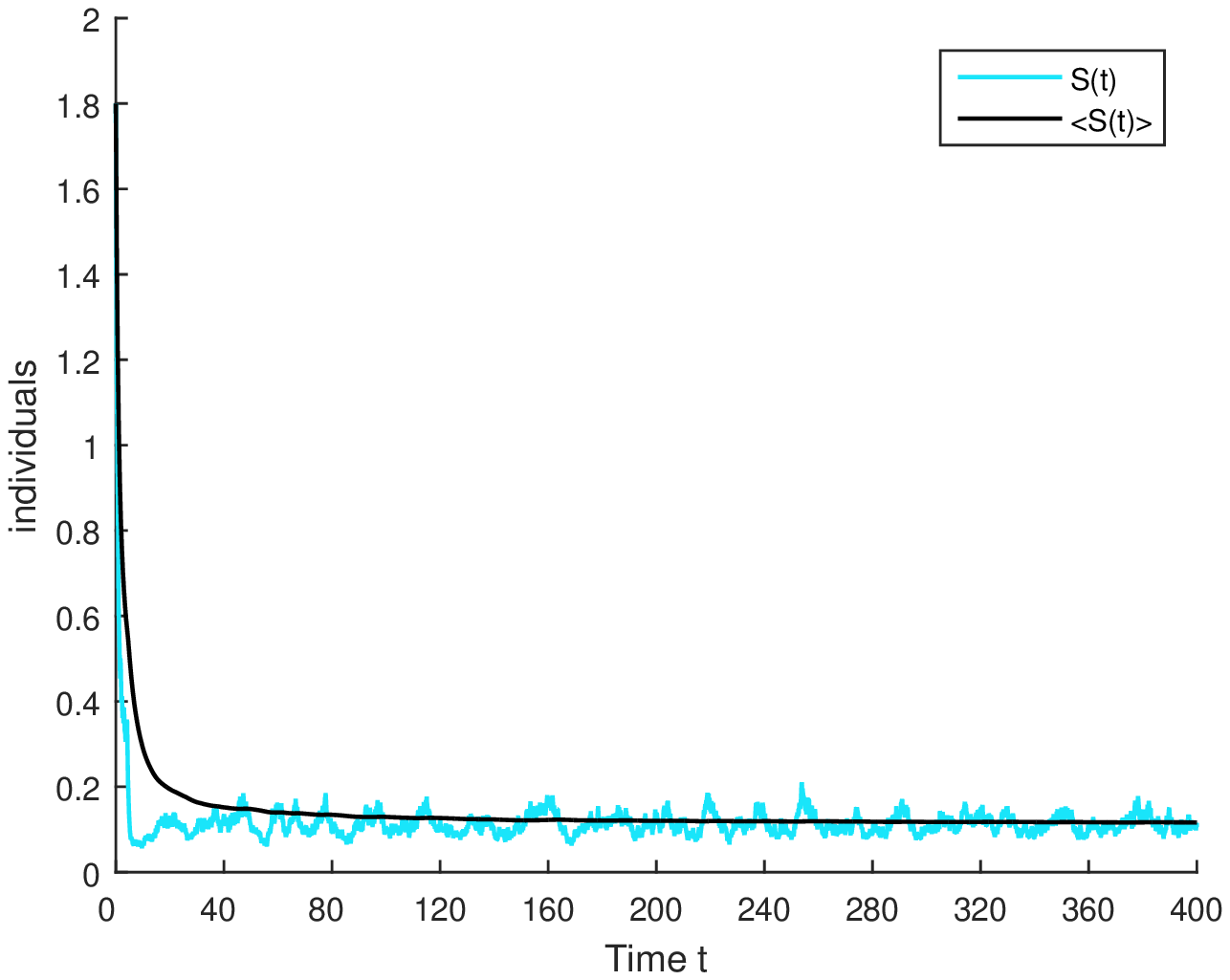}
  }%
\subfigure{
    \includegraphics[width=.4\linewidth]{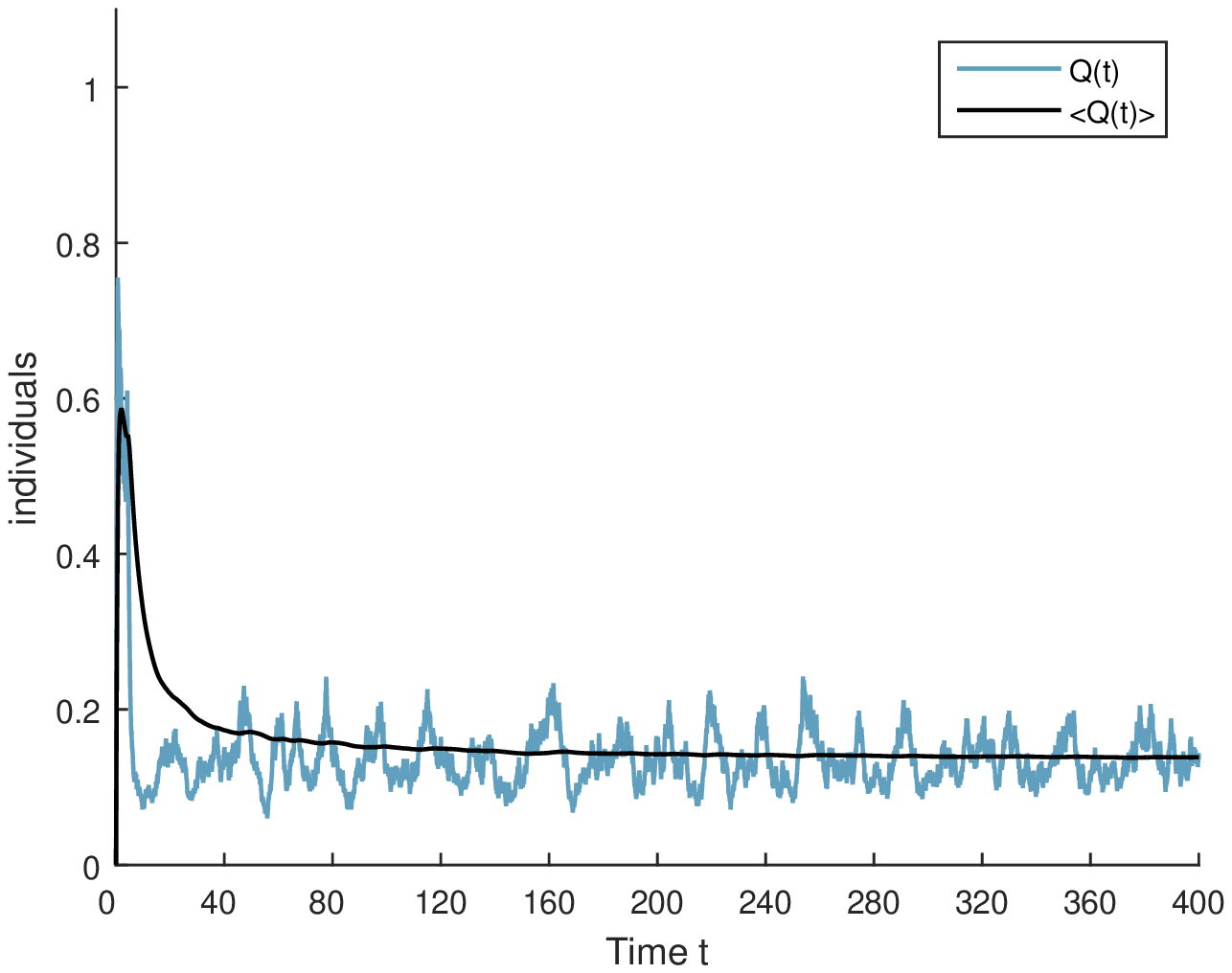}
  }\\[-2pt]
  \subfigure{
    \includegraphics[width=.4\linewidth]{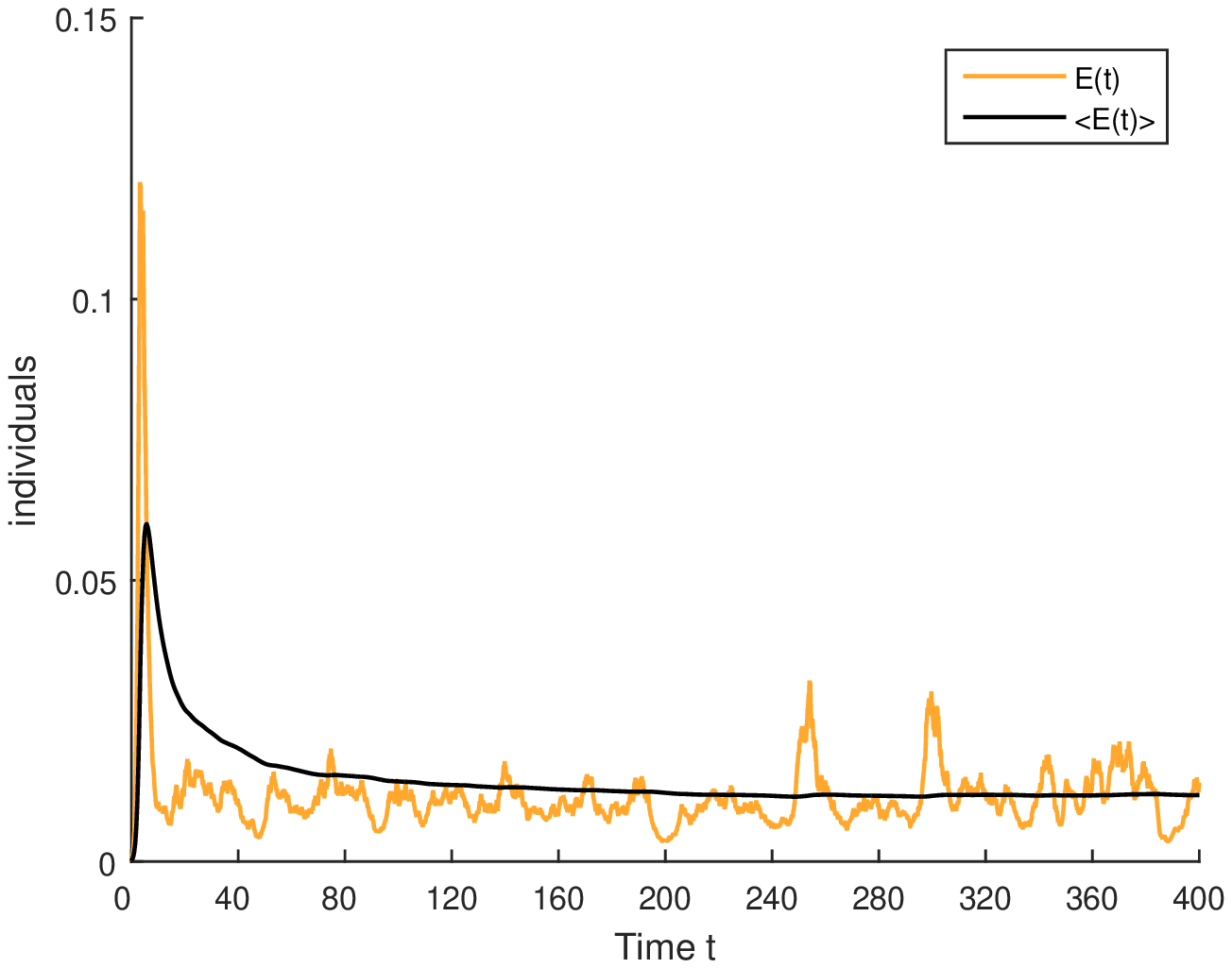}
  }%
\subfigure{
    \includegraphics[width=.4\linewidth]{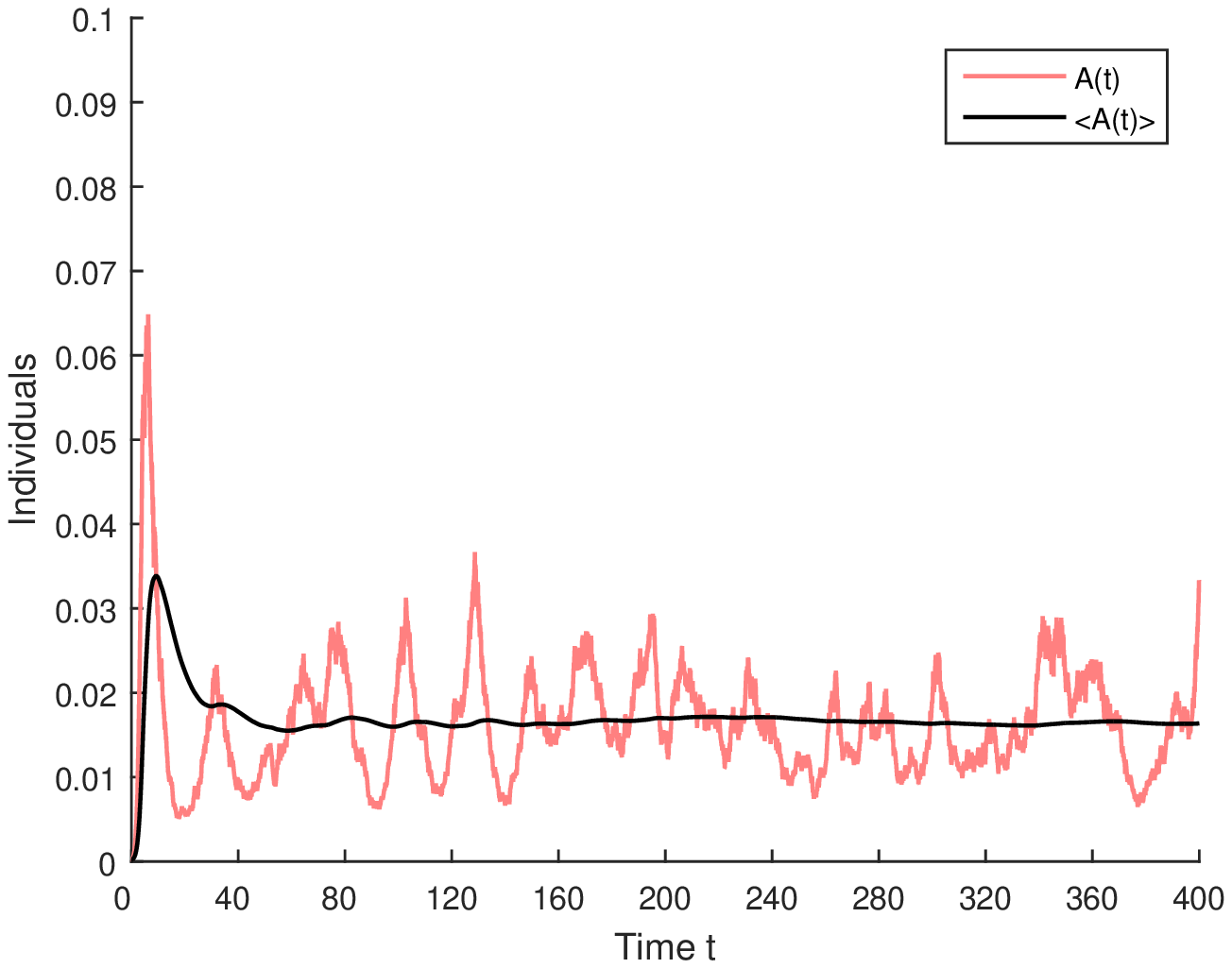}
  \centering
  }\\[-2pt]
    \subfigure{
    \includegraphics[width=.4\linewidth]{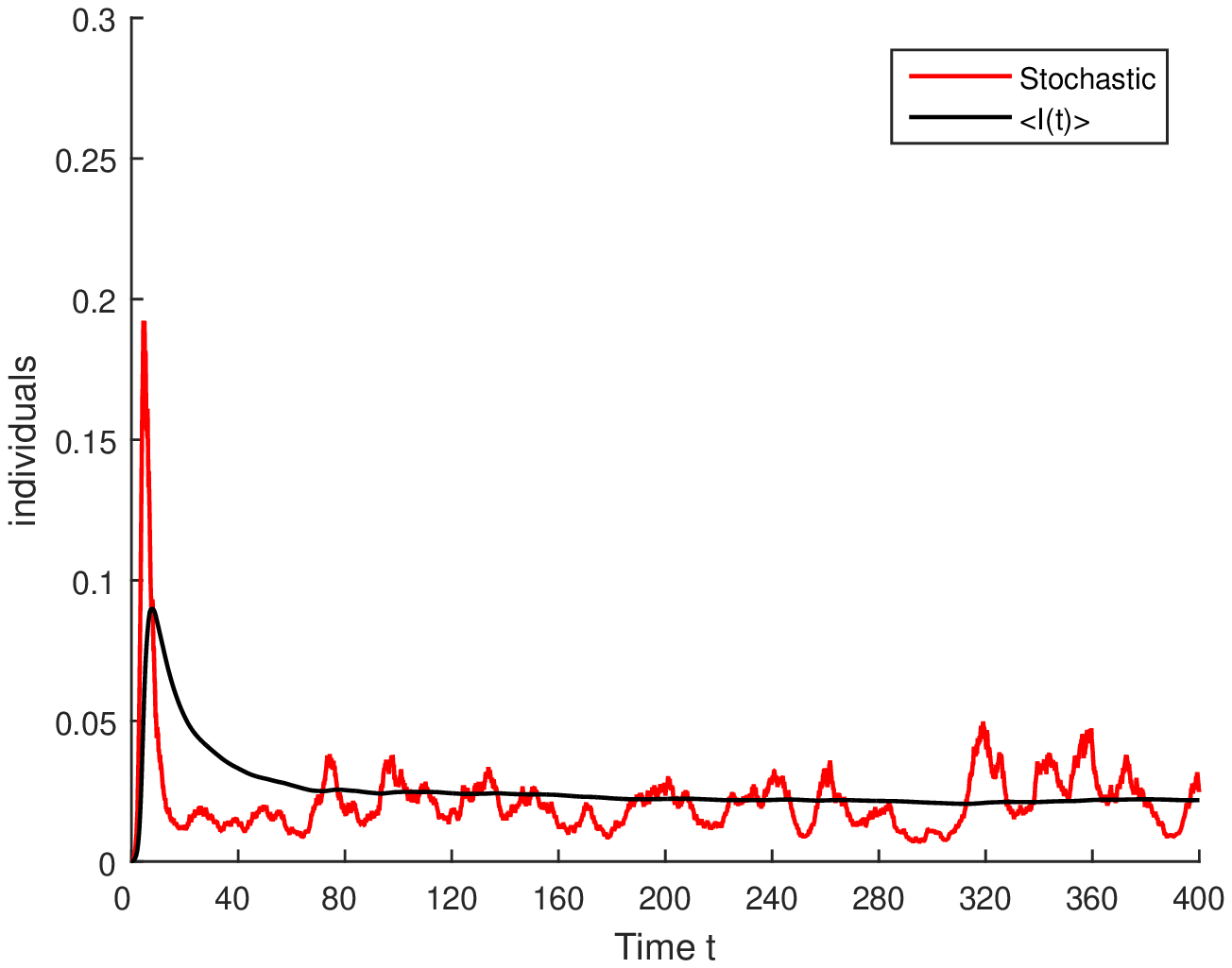}
  }%
\subfigure{
    \includegraphics[width=.4\linewidth]{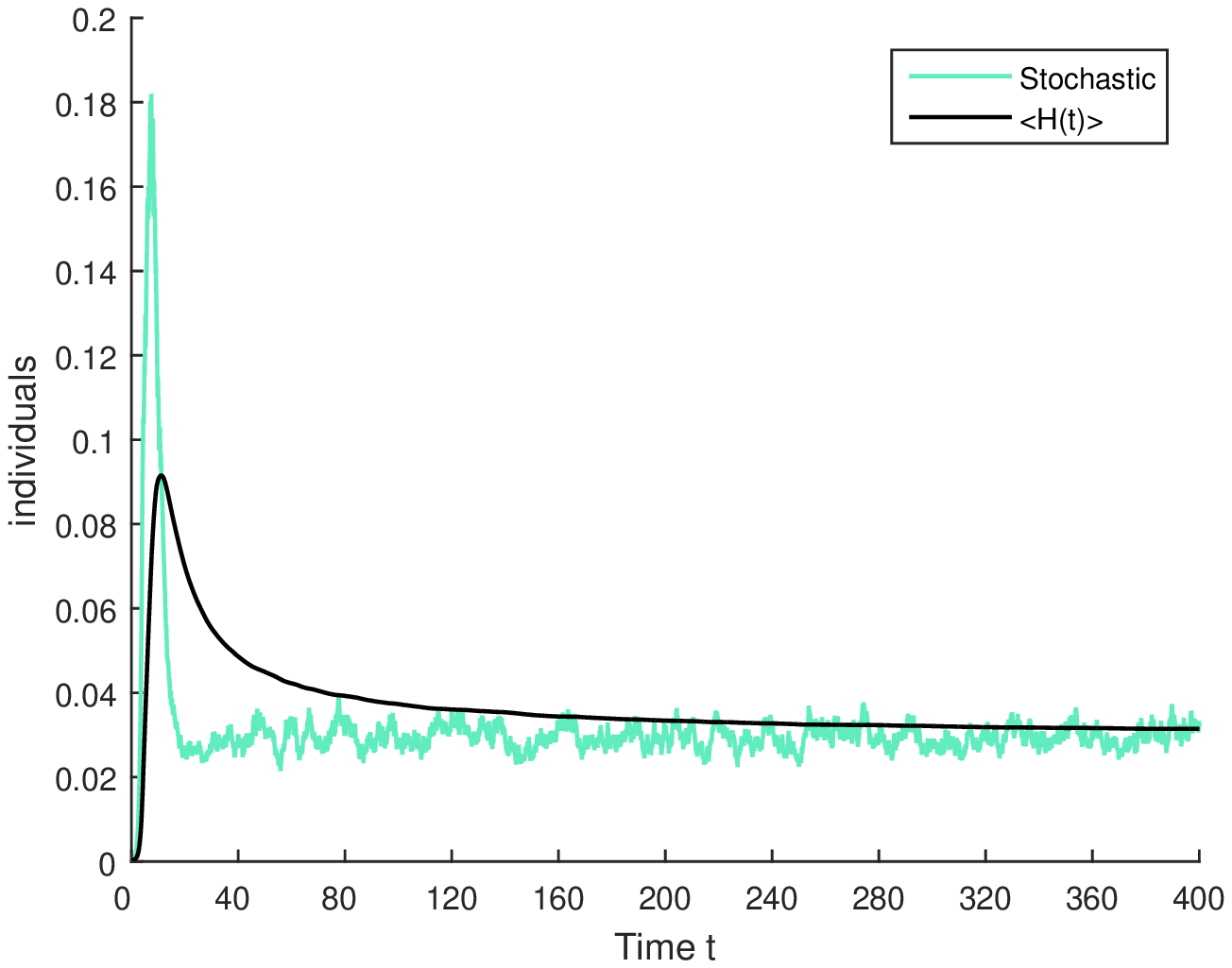}
  }\\[-2pt]
  \hspace*{2.8cm}\subfigure{
    \includegraphics[width=.4\linewidth]{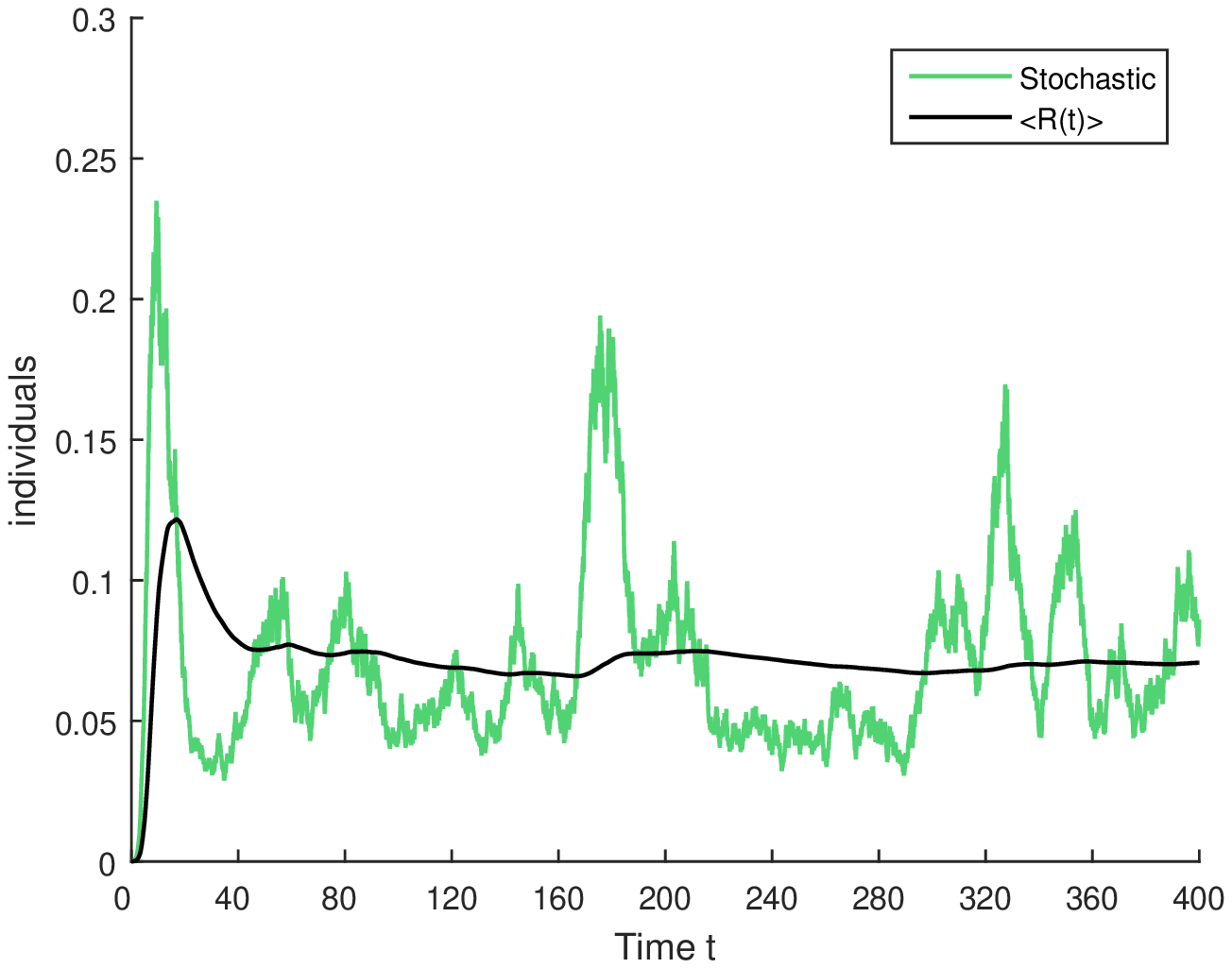}
  }\vspace*{-6pt}
 \caption{Trajectories of COVID-19 stochastic model \eqref{systo} taking $\beta_1=4.1\times 10^{-3}$, $\beta_2=0.1\hspace{-2pt}\times\hspace{-2pt}\beta_1$ and $p=0.6201$ ($\rho_1(\widehat{\alpha})=1.0266>0.9694
=\rho_2$) .}\label{Fig4}
\end{figure}
\begin{figure}[H]
\centering
\subfigure{
    \includegraphics[width=.4\linewidth]{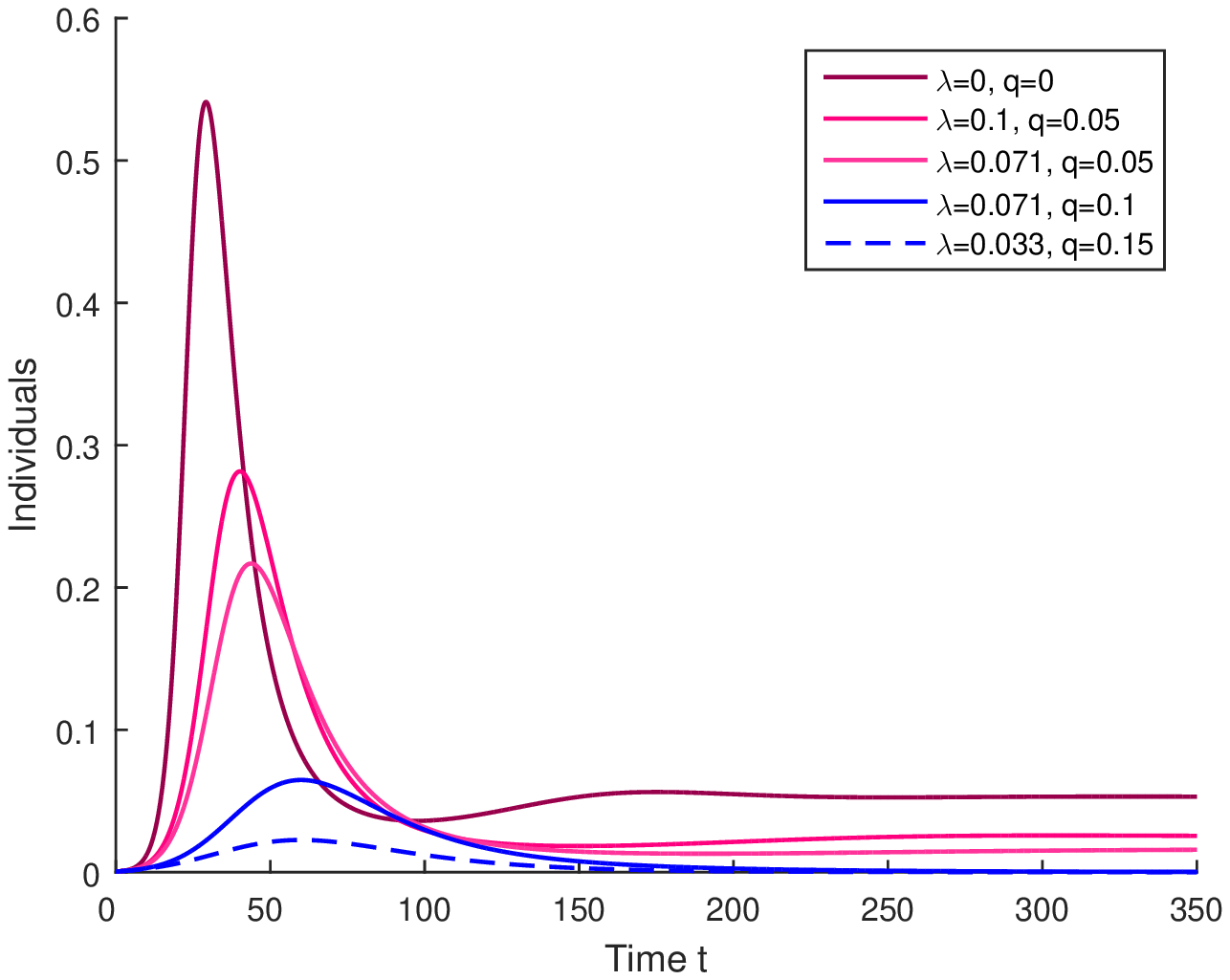}
      }%
\subfigure{
    \includegraphics[width=.4\linewidth]{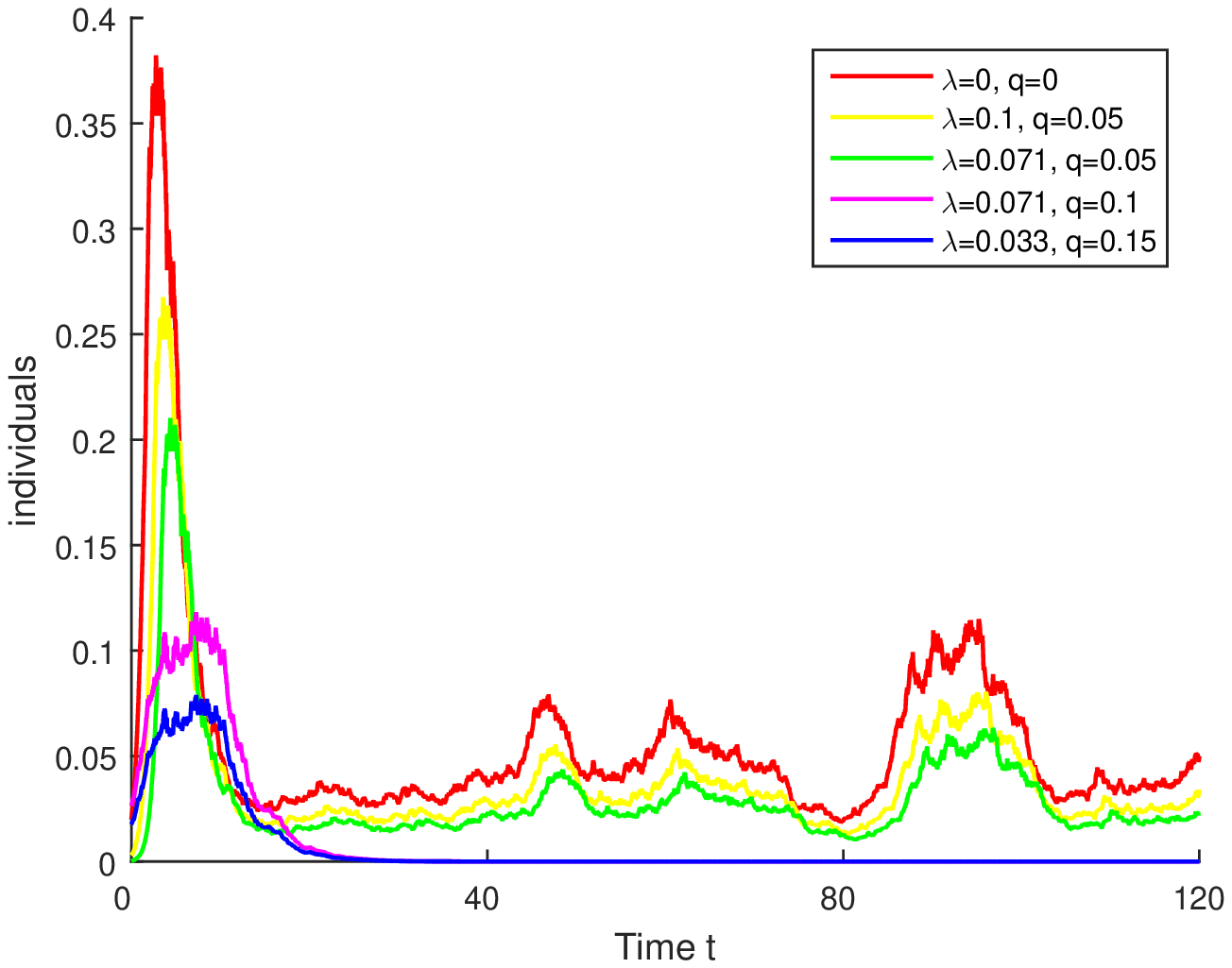}
     }  
\caption{The impact of the quarantine parameters $\lambda$ and $q$ on the deterministic and stochastic trajectories of the total  infected individuals $I_{\text{total}}(t):=E(t)+A(t)+I(t)$. The rest of the parameters is taken respectively as in Figure \ref{Fig2} and Figure \ref{Fig4}.} 
\label{Fig5} 
\end{figure}
\begin{figure}[H]
\centering
\subfigure{
    \includegraphics[width=.4\linewidth]{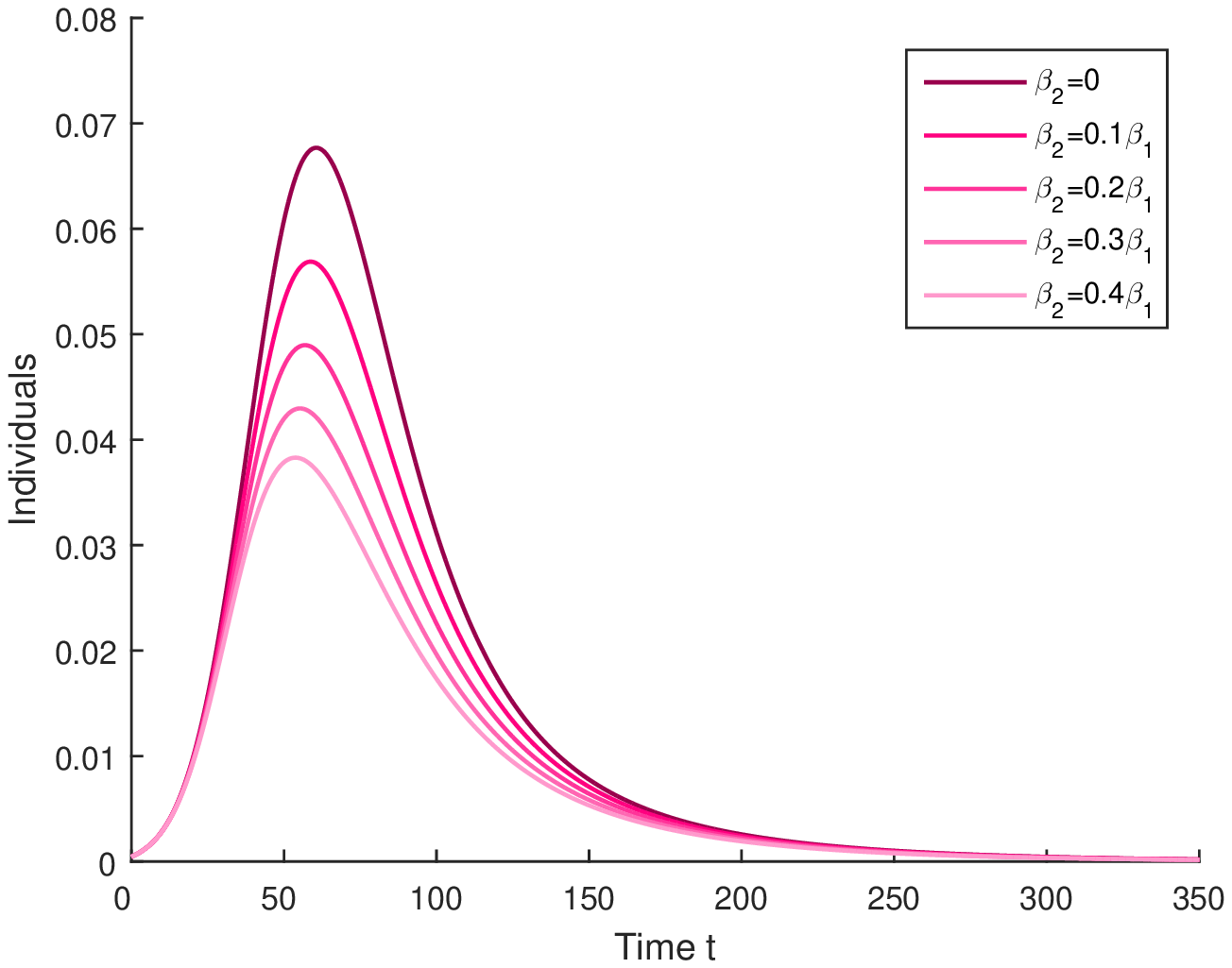}
    }%
\subfigure{
    \includegraphics[width=.4\linewidth]{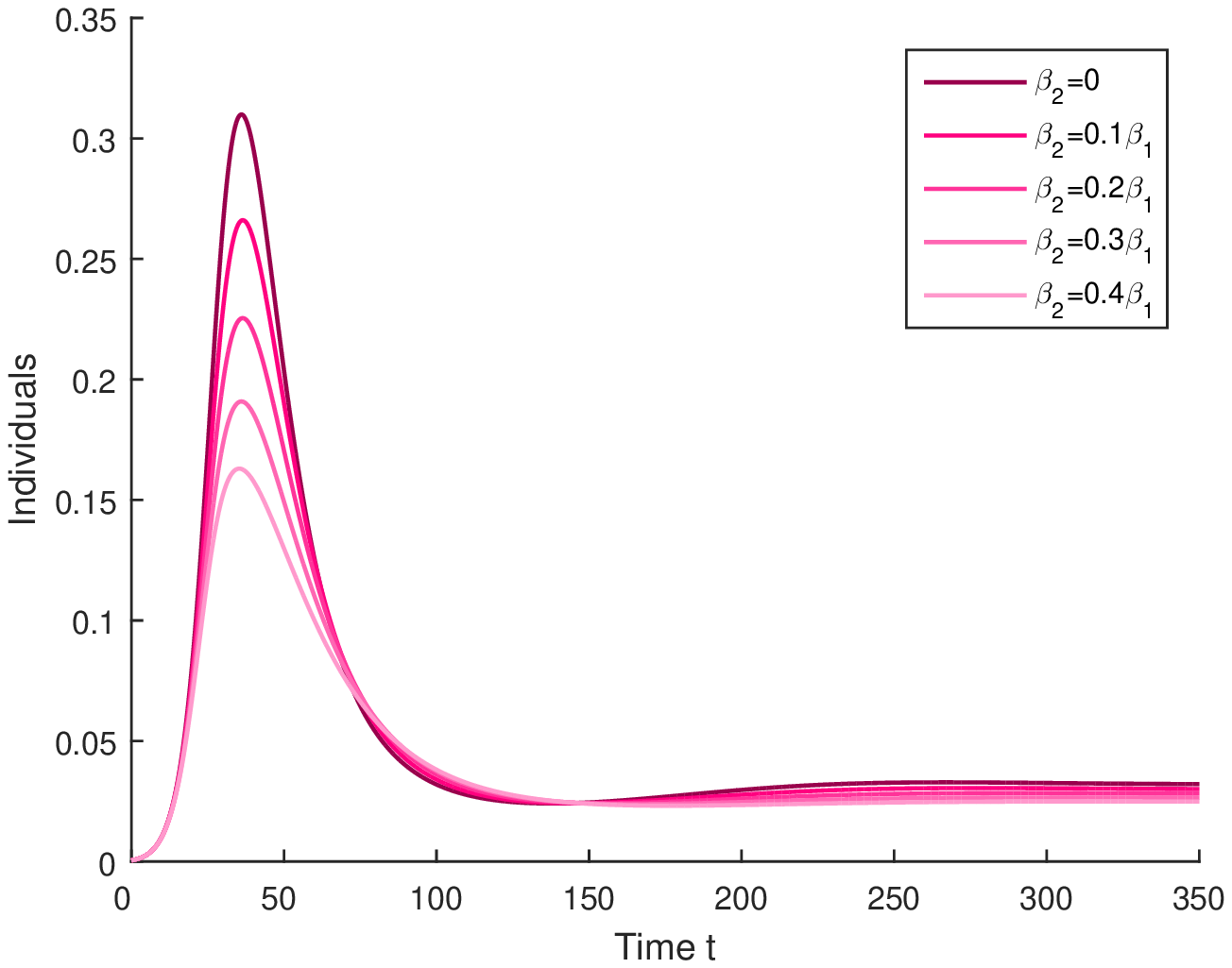}
   } 
 \caption{The effect of the awareness rate $\beta_2$ on the deterministic trajectories of the total infected individuals $I_{\text{total}}(t):=E(t)+A(t)+I(t)$ when parameters are taken  respectively as in  Figure \ref{Fig1} and Figure \ref{Fig2}.}\label{Fig6}   
\end{figure}
\begin{figure}[H]
\centering
\subfigure{
    \includegraphics[width=.4\linewidth]{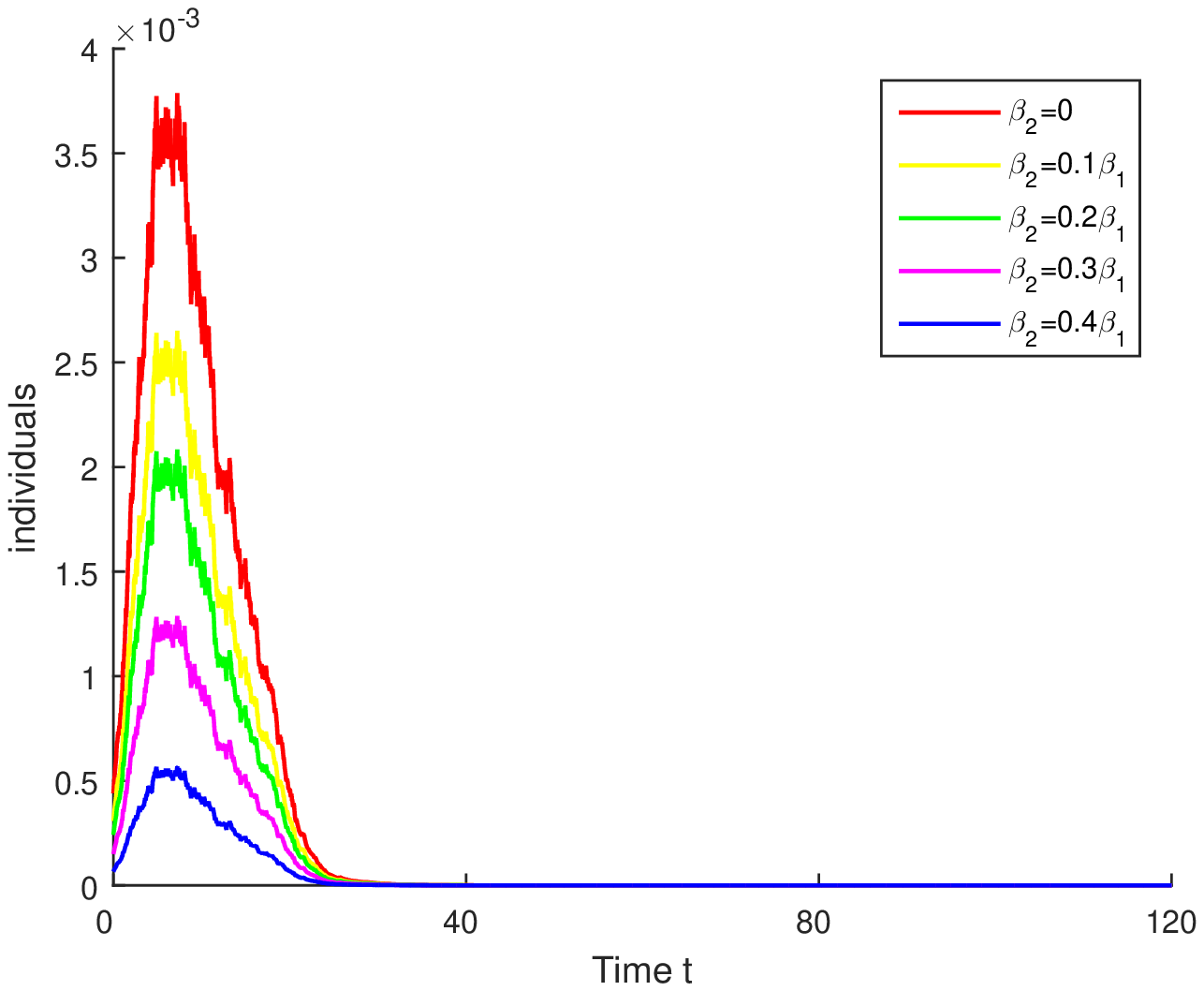}
  }%
\subfigure{
    \includegraphics[width=.4\linewidth]{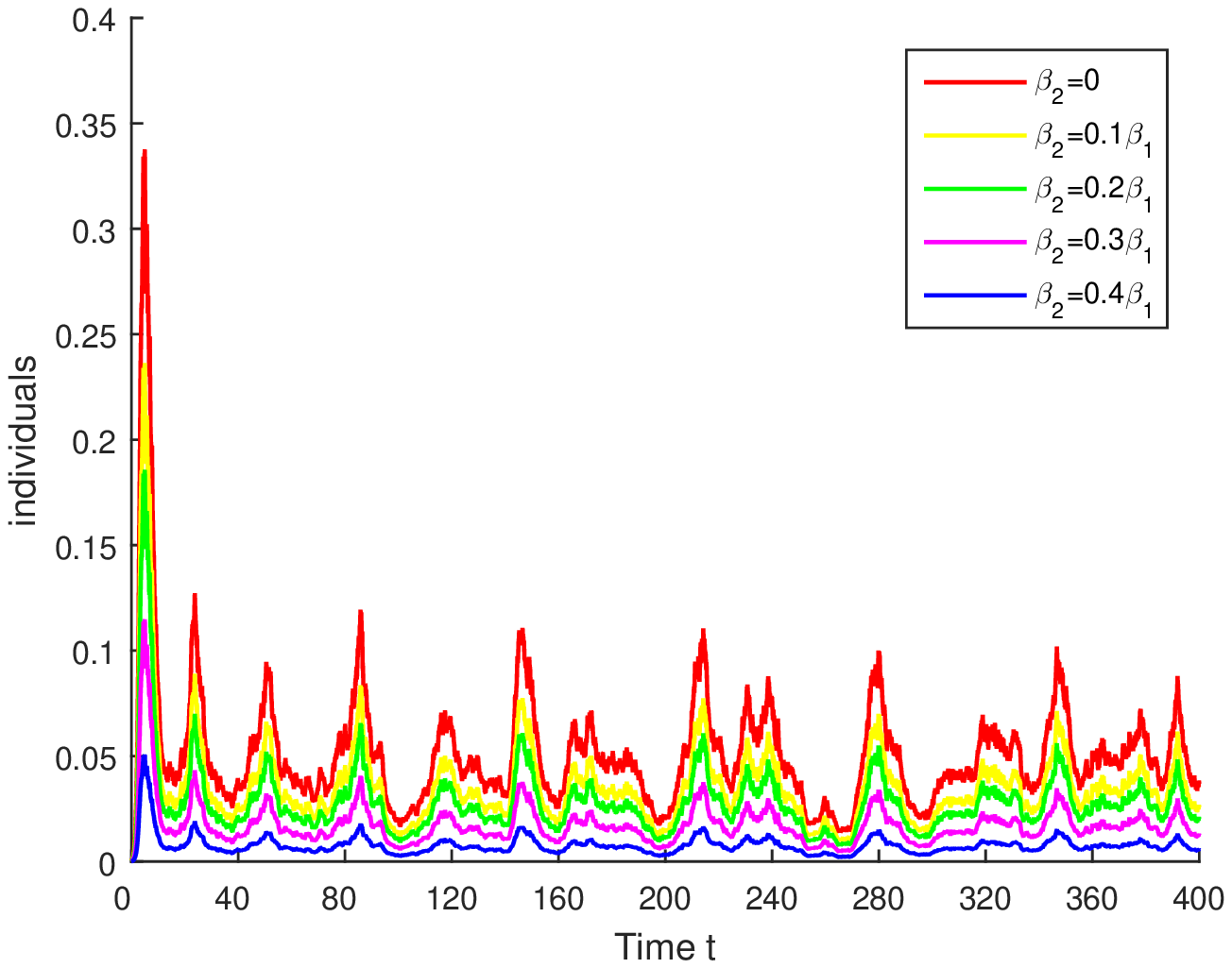}
    }  
  \caption{Stochastic paths of the  total infected individuals $I_{\text{total}}(t):=E(t)+A(t)+I(t)$ under different response intensities $\beta_2$. The other parameters are taken respectively as in Figure \ref{Fig3} and Figure \ref{Fig4}.}
  \label{Fig7}
 \end{figure} 
\section{Conclusion and discussion}\label{sec4}
The current Coronavirus disease is a major danger that threatens the whole world, and in this context, mathematical modelling is a very powerful tool for knowing more about how such a virus is transmitted within a host population of humans. In this regard, an SQEAIHR epidemic model that describe the COVID-19 dynamics under the application of quarantine and coverage media strategies is proposed on both deterministic and stochastic forms in this work. Moreover, a rigorous mathematical analysis of this model is performed to get an overview of  COVID-19 dissemination behaviour. The principal epidemiological and mathematical findings of our study are presented as follows:
\begin{itemize}
\item[$\bullet$] For the deterministic version of COVID-19 model, the basic reproduction number $\mathcal{R}_0$ is calculated by using the next-generation matrix approach and based on its expression, we have determined many dynamical properties of this version. More precisely,  when  $\mathcal{R}_0<1$, the COVID-19-free steady point   $\mathcal{E}^o$ is the unique equilibrium of system \eqref{detr}, and it is globally asymptotically stable in this case. On the other side, when $\mathcal{R}_0>1$, the disease-free equilibrium $\mathcal{E}^o$ is still present but it becomes unstable, and another endemic one $\mathcal{E}^\star$ appears this time, which makes our system \eqref{detr} uniformly persistent according to  Theorem \ref{persist}.  
\item[$\bullet$] For the stochastic version of COVID-19 model, we have demonstrated the existence and uniqueness of a global positive solution, and besides this, we have established that this latter is stochastically ultimately bounded, in other words, the probability of this solution exploding in the infinite time is very low (see Theorem \ref{bounded}). During our exploration of the perturbed system \eqref{systo}, we have derived the conditions for COVID-19 extinction and persistence, and we remarked that they are mainly depending on the magnitude of the noises intensities as well as the system parameters. 
\end{itemize}
Compared to the existing literature, the novelty of our work lies in new analysis techniques and improvements which are summarized in the following items: 
\begin{itemize}
\item[$\bullet$] Our article sheds some new light on the next-generation matrix method and applies the Varga's theorem \cite{salletbook,kamgang2008computation,varga1999matrix} to show the local stability of the disease-free state without making recourse to the Routh-Hurwitz criterion, which enabled us to avoid many long calculations.
\item[$\bullet$] By eliminating a hypothetical redundancy and bringing into play the notion of the positively invariant set, our work provides an improved and generalized version of Theorem 9.2 in \cite{brauer2013mathematical}, and use it to establish the global stability of the disease-free equilibrium $\mathcal{E}^o$.  
\item[$\bullet$] For the case of the stochastic COVID-19 model \eqref{systo}, in Lemma \ref{lass} and Theorem \ref{las}, we showed that the following inequality:
         $$\rho_1(\widehat{\alpha})>\rho_2,$$ 
is a sufficient condition for the non-disappearance of COVID-19 infective individuals.
\end{itemize} 
In order to support the theoretical results and clarify the role of quarantine and awareness strategies towards the COVID-19 spreading behaviour,  we have presented some numerical simulation examples.  From the curves appearing in these simulations, and more precisely those where we have gradually varied the value of $\beta_2$, $\lambda$ and $q$, we noticed that the quarantine and awareness strategies can effectively lower the infection and reduce the sizes of infected species. Despite its remarkable efficiency, the coverage media alone is unfortunately unable to prevent the COVID-19 from persisting. This last fact can be clearly remarked and confirmed for the deterministic case by observing that $\mathcal{R}_0$ does not contain any media coverage parameter. So, in short, we conclude that the first thing that must be done in the future during confronting a new and rapidly spreading disease like COVID-19 is to adopt quarantine and media intervention strategies,  pending the emergence of an appropriate and safe treatment. 
\par We believe that our article can be a rich basis for future studies especially after the recent discovery of a new and stronger variant of COVID-19, named COVID-19-VUI–202012/01, in the United Kingdom \cite{ENG}. 
\bibliographystyle{elsart-num}
\bibliography{bibl} 
\end{document}